\newtheorem{remark}{Remark}[section]
\title{A Convergent Approximation of the Pareto Optimal Set for Finite Horizon Multiobjective Optimal Control  Problems (MOC)
Using Viability Theory \footnotemark[1]}
\author{A. Guigue \footnotemark[1]}
\begin{document}

\maketitle

\renewcommand{\thefootnote}{\fnsymbol{footnote}}
\footnotetext[1]{Department of Mathematics, The University of
British Columbia, Room 121, 1984 Mathematics Road, Vancouver, B.C.,
Canada, V6T 1Z2 (aguigue@math.ubc.ca).}

\begin{abstract}
The objective of this paper is to provide a convergent numerical
approximation of the Pareto optimal set for finite-horizon
multiobjective optimal control problems for which the objective space
is not necessarily convex. Our approach is based on
Viability Theory. We first introduce the set-valued return function
$V$ and show that the epigraph of $V$ is equal to the viability
kernel of a properly chosen closed set for a properly chosen
dynamics. We then introduce an approximate set-valued return
function with finite set-values as the solution of a multiobjective
dynamic programming equation. The epigraph of this approximate
set-valued return function is shown to be equal to the finite
discrete viability kernel resulting from the convergent numerical
approximation of the viability kernel proposed in
\cite{Cardaliaguet99_Via,Cardaliaguet00_Via}. As a result, the
epigraph of the approximate set-valued return function converges
towards the epigraph of $V$. The approximate set-valued return
function finally provides the proposed numerical approximation of
the Pareto optimal set for every initial time and state. Several
numerical examples are provided.
\end{abstract}

\begin{keywords}
Multiobjective optimal control, Pareto optimality, Viability Theory,
convergent numerical approximation, dynamic programming
\end{keywords}

\begin{AMS}
49M2, 49L20, 54C60, 90C29
\end{AMS}

\pagestyle{myheadings} \thispagestyle{plain} \markboth{A.
GUIGUE}{CONVERGENT APPROXIMATIONS TO (MOC)}


\section{Introduction}

Many engineering applications, such as trajectory planning for
spacecraft~\cite{MOC_appl_6} and robotic
manipulators~\cite{Guigue10}, continuous casting of
steel~\cite{MOC_appl_5}, etc., can lead to an optimal control
formulation where $p$ objective functions ($p > 1$) need to be
optimized simultaneously. For a general optimization problem (GOP)
with a vector-valued objective function, the definition of an
optimal solution  requires the comparison between elements in the
objective space, which is the set of all possible values that can be
taken by the vector-valued objective function. This comparison is
generally provided by a binary relation, expressing the preferences
of the decision maker. In applications, it is common to consider the
binary relation defined in terms of a pointed convex cone $P \subset
\mathbf{R}^p$ containing the origin \cite{Yu74_moo}. However, in
this paper, for simplicity, we will only consider the case $P=
\mathbf{R}_+^p$, which yields the well-known Pareto optimality. The
resolution of (GOP) therefore consists of finding the set of Pareto
optimal elements in the objective space, or Pareto optimal set. In
general, this set cannot be obtained analytically and we have to
resort to numerical approximations. The main objective of this paper
is therefore to propose a convergent numerical approximation of the
Pareto optimal set for a general finite-horizon  multiobjective
optimal control problem~(MOC). By general, we mean that we do not
make any convexity assumption on the objective space $Y$ (or more
generally, on the set $Y + \mathbf{R}_+^p$). Indeed, in the case
where the objective space (or $Y + \mathbf{R}_+^p$) is convex,
simple methods such the weighting method can  be used to generate
the entire
Pareto optimal set (Theorem 3.4.4, \cite[p.\ 72]{NonlinearMOO1_book}).\\

When the objective space is not convex, very few approaches to find
the Pareto optimal set have been proposed. An important line of
research is to use evolutionary
algorithms~\cite{Book_EA,MOC_appl_7}, such as genetic algorithms.
Also, very recently, an approach~\cite{MOC_num_2} inspired  from the
$\epsilon$-constraint method  in nonlinear multiobjective
optimization \cite[pp.\ 85--95]{NonlinearMOO2_book} has been
developed for  multiobjective exit-time optimal control problems
where $P = \mathbf{R}_+^p$. In this approach, the $n$-dimensional
state is augmented by $p-1$ dimensions, which yields a new single
objective optimal control problem. The Pareto optimal set of the
original problem can be retrieved by inspecting the values of the
return function of this new problem. The return function of the new
problem is finally approximated by solving numerically the
corresponding $(n+p-1)$-dimensional Hamilton-Jacobi-Bellman equation
using a semi-Lagrangian ''marching'' method.\\


In this paper, instead of an exit-time optimal control problem, we
consider an optimal control problem over a finite horizon $[0,T]$.
The proposed approach fundamentally differs  from~\cite{MOC_num_2}.
Instead of augmenting the state space and solving the resulting
augmented Hamilton-Jacobi-Bellman equation, we define the set-valued
return function $V(\cdot,\cdot): [0,T] \times \mathbf{R}^n
\rightarrow 2^{\mathbf{R}^p}$ \cite{Guigue11,Guigue09} as the
set-valued map associating with each time $t \in [0,T]$ and state
$\mathbf{x} \in \mathbf{R}^n$ the set of  Pareto optimal elements in
the objective space $Y(t,\mathbf{x})$, where $Y(t,\mathbf{x})$ is
the set of all possible values that can be taken by the
vector-valued objective function for trajectories starting
at~$\mathbf{x}$ at time $t$. Hence, the Pareto optimal set for any
time $t$ and state $\mathbf{x}$ can be  obtained just by
evaluating~$V$ at $(t,\mathbf{x})$. We then derive a convergent
approximation of $V$ using Viability Theory
\cite{Book_Via,Cardaliaguet07_Via}. This approximation is a
set-valued map with finite set-values, called the approximate
set-valued return function. Hence, an approximation of the Pareto
optimal set $V(t,\mathbf{x})$ can be obtained  just by evaluating
the approximate set-valued return function at $(t,\mathbf{x})$. The
advantage of using Viability Theory is that it provides a framework
that allows to deal with problems with minimal regularity and
convexity assumptions. Hence, it is expected that the proposed
approach could be easily extended to more general classes of
problems than the one considered in this paper, e.g.,
problems with state constraints, etc..\\

More precisely, the first step in the proposed approach  is to show
that the epigraph of $V$, i.e., the graph of the set-valued map $V +
\mathbf{R}_+^p$, is equal to the viability kernel of a properly
chosen closed set for some properly chosen dynamics. The next step
is to introduce an approximate set-valued return function  as the
solution of a multiobjective dynamic programming equation. The
epigraph of this approximate set-valued return function is shown to
be equal to the finite discrete viability kernel resulting from the
convergent numerical approximation of the viability kernel proposed
in \cite{Cardaliaguet99_Via,Cardaliaguet00_Via}. From there, we
easily obtain that the epigraph of the approximate set-valued return
function converges in the sense of Painlev\'e-Kuratowski towards the
epigraph of $V$. The multiobjective dynamic programming equation
obtained is very similar to the one obtained in \cite{Guigue10_1},
where no proof of convergence was provided.\\

This paper is organized as follows. In \S \ref{s:P}, we detail the
class of multiobjective optimal control problems considered and
define  $V$.  In \S \ref{s:moogeneral}, we briefly discuss the
concept of optimality  in multiobjective optimization and present
several useful properties related to  Pareto optimal sets. In \S
\ref{s:charac}, we show that the epigraph of $V$ is equal to the
viability kernel of a properly chosen closed set for a properly
chosen dynamics. Following
\cite{Cardaliaguet99_Via,Cardaliaguet00_Via}, we then propose in \S
\ref{s:approx} a finite discrete approximation of this viability
kernel. In \S \ref{s:conv}, we show that the finite discrete
viability kernel resulting from this approximation is equal to the
epigraph of an approximate set-valued return function, defined as
the solution of a multiobjective dynamic programming equation. From
this multiobjective dynamic programming  equation, we derive in \S
\ref{s:numalgo} a numerical algorithm to compute the approximate
set-valued return function and therefore the approximate Pareto
optimal set at $(t,\mathbf{x})$. Some numerical examples
\cite{Guigue10_1}, for which the Pareto optimal set is analytically
known, are provided in \S \ref{s:ex} and some conclusions are
finally drawn in \S \ref{s:con}.

\section{A multiobjective finite-horizon optimal control problem (\cite{OptConTh8_book}}) \label{s:P}

In this paper, we will take for $\|\cdot\|$ in $\mathbf{R}^p$ and $\mathbf{R}^n$ the supremum norm. Let $\mathbf{B}$ be the closed unit ball.\\

Consider the evolution over a fixed finite time interval $I =
[0,T] \ ( 0 < T < \infty)$ of an autonomous dynamical system whose
$n$-dimensional state dynamics are given by a continuous function
$\mathbf{f}(\cdot,\cdot):  \mathbf{R}^n \times U
\rightarrow \mathbf{R}^{n}$, where the control space $U$ is a
nonempty compact subset of $\mathbf{R}^m$.
The function $\mathbf{f}(\cdot,\mathbf{u})$ is assumed to be
Lipschitz, i.e., some $K_{\mathbf{f}} > 0$ obeys
\begin{equation} \label{eq:lipf}
\forall \mathbf{u} \in U, \ \forall
\mathbf{x_1},\mathbf{x_2} \in \mathbf{R}^{n}, \ \|
\mathbf{f}(\mathbf{x_1},\mathbf{u}) -
\mathbf{f}(\mathbf{x_2},\mathbf{u}) \| \leq K_{\mathbf{f}} \|
\mathbf{x_1} - \mathbf{x_2} \|.
\end{equation}
We also assume that the function $\mathbf{f}$ is uniformly bounded, i.e., some
$M_\mathbf{f} > 0$ obeys
\begin{equation} \label{eq:fbounded}
\forall \mathbf{x} \in \mathbf{R}^n, \ \forall \mathbf{u} \in U, \
\| \mathbf{f}(\mathbf{x},\mathbf{u}) \| \leq M_\mathbf{f}.
\end{equation}
A control $\mathbf{u}(\cdot): I \rightarrow U$ is a bounded,
Lebesgue measurable function. The set of  controls
is denoted by $\mathcal{U}$. The continuity of $\mathbf{f}$ and the Lipschitz condition (\ref{eq:lipf})
guarantee that, given any $t \in I$, initial state $\mathbf{x} \in \mathbf{R}^n$, and
control $\mathbf{u}(\cdot) \in \mathcal{U}$,  the system
of differential equations governing the dynamical system,
\begin{equation} \label{eq:nec1}
\left \{ \begin{array}{lcl} \mathbf{\dot{x}}(s)& = & \mathbf{f}(\mathbf{x}(s),\mathbf{u}(s)), \ t \leq s  \leq T,\\
\mathbf{{x}}(t) & = & \mathbf{x},\end{array}\right.
\end{equation}
has a unique solution, called a trajectory and denoted $s \rightarrow \mathbf{x}(s;t,\mathbf{x},\mathbf{u}(\cdot))$.
Let $\mathrm{F}$ be the set-valued map defined from $\mathbf{R}^n$ to $\mathbf{R}^n$ by
$$\mathrm{F}(\mathbf{x}) = \bigcup_{\mathbf{u} \in U} \mathbf{f}(\mathbf{x},\mathbf{u}).$$

The cost of a trajectory over $[t,T], \ t \in I,$ is given by a
$p$-dimensional vector function $\mathbf{J}(\cdot,\cdot,\cdot): I
\times \mathbf{R}^n \times \mathcal{U} \rightarrow \mathbf{R}^p$,
\begin{equation} \label{eq:obj}
\mathbf{J}(t,\mathbf{x},\mathbf{u}(\cdot)) = \int_{t}^{T}
\mathbf{L}(\mathbf{x}(s;\mathbf{x},\mathbf{u}(\cdot)),\mathbf{u}(s))
\ \mathrm{d}s,
\end{equation}
where the $p$-dimensional vector function
$\mathbf{L}(\cdot,\cdot): \mathbf{R}^n \times U
\rightarrow \mathbf{R}^p$, called the running cost, is assumed to be continuous. For simplicity,  no terminal cost  is included in (\ref{eq:obj}). We assume that the function $\mathbf{L}$ is uniformly bounded, i.e., some
$M_\mathbf{L} \geq 0$ obeys
\begin{equation} \label{eq:Lbounded}
\forall \mathbf{x} \in \mathbf{R}^n, \ \forall \mathbf{u} \in U, \
\| \mathbf{L}(\mathbf{x},\mathbf{u}) \| \leq M_\mathbf{L},
\end{equation}
and that the function $\mathbf{L}(\cdot,\mathbf{u})$ satisfies a Lipschitz condition, i.e., some $K_\mathbf{L} \geq 0$ obeys
\begin{equation} \label{eq:LLipschitz}
\forall \mathbf{u} \in U, \ \forall
\mathbf{x_1},\mathbf{x_2} \in \mathbf{R}^n, \ \|
\mathbf{L}(\mathbf{x_1},\mathbf{u}) -
\mathbf{L}(\mathbf{x_2},\mathbf{u}) \| \leq K_\mathbf{L} \| \mathbf{x_1}
- \mathbf{x_2} \|.
\end{equation}
Let $\mathrm{L}$ be the set-valued map defined from $\mathbf{R}^n$ to $\mathbf{R}^p$ by
$$\mathrm{L}(\mathbf{x}) = \bigcup_{\mathbf{u} \in U} \mathbf{L}(\mathbf{x},\mathbf{u}).$$

The objective space $Y(t,\mathbf{x})$  for (MOC) is defined as the set of
all possible costs~(\ref{eq:obj}):
\begin{displaymath}
\ Y(t,\mathbf{x}) = \bigg\{
\mathbf{J}(t,\mathbf{x},\mathbf{u}(\cdot)), {\mathbf{u}(\cdot) \in
\mathcal{U}} \bigg\}.
\end{displaymath}
From  $(\ref{eq:Lbounded})$, it follows that the set $Y(t,\mathbf{x})$ is bounded
(by $M_\mathbf{L}T$), and also that $Y(t,\mathbf{x}) \subset \{-(T-t)M_\mathbf{L}\mathbf{1}\} + \mathbf{R}^p_+$.
However, the set $Y(t,\mathbf{x})$ is not necessarily closed.\\

The set-valued return function $V(\cdot,\cdot): I \times
\mathbf{R}^n  \rightarrow 2^{\mathbf{R}^p}$ for (MOC) is defined as
the set-valued map which associates with each time $t \in I$ and
initial state $\mathbf{x} \in \mathbf{R}^n$ the set of Pareto
optimal elements in the objective space $Y(t,\mathbf{x})$, where the
definition of a Pareto optimal element is postponed to \S
\ref{s:moogeneral}:
\begin{equation} \label{eq:setvalued}
V(t,\mathbf{x}) = \mathcal{E}(\mathrm{cl}(Y(t,\mathbf{x}))).
\end{equation}
The closure in (\ref{eq:setvalued}) is used to guarantee the existence of Pareto optimal elements (Proposition 3.5,
\cite{Guigue09}). Hence, $\forall t \in I, \ \forall \mathbf{x} \in \mathbf{R}^n, \ V(t,\mathbf{x}) \neq \emptyset$.\\
\begin{remark} \label{rem:valuefunction1dof} When $p=1$, \emph{(\ref{eq:setvalued})}
takes the form
$$V(t,\mathbf{x})  =  \bigg\{ \inf_{\mathbf{u}(\cdot) \in \mathcal{U}}  \int_{t}^{T}
L(\mathbf{x}(s;t,\mathbf{x},\mathbf{u}(\cdot)),\mathbf{u}(s)) \ \mathrm{d}s   \bigg\}.$$
Hence, $V(t,\mathbf{x}) = \{v(t,\mathbf{x})\},$ where $v(\cdot,\cdot)$ is the value function
for single objective optimal control problems \emph{\cite{OptConTh8_book,OptConTh5_book}}.
\end{remark}\\ \\
Finally, as $V(t,\mathbf{x}) \subset \mathrm{cl}(Y(t,\mathbf{x}))$,
we have \begin{equation} \label{eq:inclusion} V(t,\mathbf{x}) \subset \{-(T-t)M_\mathbf{L}\mathbf{1}\}  +
\mathbf{R}^p_+.\end{equation}

The  objective of this paper is to find a convergent approximation
to the Pareto optimal set $V(0,\mathbf{x_0})$ where $\mathbf{x_0}
\in \mathbf{R}^n$ is some given initial state.

\section{Multiobjective Optimization} \label{s:moogeneral}

For an optimization problem with a $p$-dimen-sional vector-valued objective function, the definition
of an optimal solution requires the comparison of any two elements $\mathbf{y_1},\mathbf{y_2}$ in the objective space, which is the set of all possible values that can
be taken by the vector-valued objective function. This
comparison is generally provided by a binary relation, expressing
the preferences of the decision maker. In applications, it is common
to consider the binary relation defined in terms of a pointed convex
cone $P \subset \mathbf{R}^p$ containing the origin \cite{Yu74_moo}.\\
\begin{definition} \label{def:binaryRelation}
Let $\mathbf{y_1},\mathbf{y_2} \in \mathbf{R}^p$. Then, $\mathbf{y_1} \preceq \mathbf{y_2}$  if and only if
$\mathbf{y_2} \in \mathbf{y_1} + P$.
\end{definition}\\

The binary relation in Definition \ref{def:binaryRelation}  yields  the definition
of generalized Pareto optimality.\\
\begin{definition} \label{def:minele}
Let $S$ be a nonempty subset of $\mathbf{R}^p$. An element
$\mathbf{y_1} \in S$ is said to be a generalized Pareto optimal
element of $S$ if and only if there is no $\mathbf{y_2} \in S \
(\mathbf{y_2} \neq \mathbf{y_1})$ such that $\mathbf{y_1} \in
\mathbf{y_2} + P,$ or equivalently, if and only if  there is no
$\mathbf{y_2}$ such that $\mathbf{y_1 }\in \mathbf{y_2} +
P\backslash\{\mathbf{0}\}$. The set of generalized Pareto optimal
elements of $S$ is called the generalized Pareto optimal set and is
denoted by $\mathcal{E}(S,P)$. When $P = \mathbf{R}^p_+$, the
generalized Pareto optimal elements are only referred to as Pareto
optimal elements, and the set $\mathcal{E}(S,P)$ is simply denoted
$\mathcal{E}(S).$
\end{definition}\\

An important role in this paper is played by  the external stability or domination property \cite[pp.~59-66]{Tanino88_moo}.\\

\begin{definition}[External stability]
A nonempty subset $S$ of $\mathbf{R}^p$ is said to be externally stable if and only
if $$ S \subset \mathcal{E}(S,P) + P.$$
\end{definition}
An immediate consequence of the external stability property is that $S + P  = \mathcal{E}(S,P) + P$. When $P$ is closed, a sufficient condition for a nonempty closed set $S$ to be externally stable is given in Proposition~\ref{prop:exter}. Note that this condition is also sufficient to guarantee the existence of
generalized Pareto optimal  elements.\\

\begin{proposition}[Theorem 3.2.10, \emph{\emph{\cite[p.\ 62]{Tanino88_moo}}}] \label{prop:exter}
Let $S$ be a nonempty closed subset of $\mathbf{R}^p$. If $P$ is closed and $S$ is $P$-bounded \emph{\cite[p.~52]{Tanino88_moo}}, i.e., $S^+ \cap - P = \{\mathbf{0}\},$ then $S$ is externally stable.\\
\end{proposition}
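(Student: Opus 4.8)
The plan is to establish the inclusion $S \subseteq \mathcal{E}(S,P)+P$ directly from the definition, by showing that every point of $S$ is dominated by some generalized Pareto optimal point of $S$. So I would fix an arbitrary $\bar{\mathbf{y}} \in S$ and try to produce $\mathbf{y}^* \in \mathcal{E}(S,P)$ with $\bar{\mathbf{y}} \in \mathbf{y}^* + P$; since $\bar{\mathbf{y}}$ is arbitrary, this yields external stability.

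First I would localize the search by passing to the lower section $T := S \cap (\bar{\mathbf{y}} - P)$. This set is nonempty (it contains $\bar{\mathbf{y}}$, since $\mathbf{0} \in P$) and closed (an intersection of the closed sets $S$ and $\bar{\mathbf{y}} - P$, the latter closed because $P$ is). The key point, and the place where $P$-boundedness enters, is that $T$ is in fact compact: its asymptotic cone satisfies $T^+ \subseteq S^+ \cap (\bar{\mathbf{y}}-P)^+$, and $(\bar{\mathbf{y}}-P)^+ = -P$ since the asymptotic cone is translation-invariant and that of the closed convex cone $-P$ is $-P$ itself; hence $T^+ \subseteq S^+ \cap (-P) = \{\mathbf{0}\}$ by hypothesis, and a nonempty closed set with trivial asymptotic cone is bounded (otherwise a sequence $\mathbf{z}_k \in T$ with $\|\mathbf{z}_k\|\to\infty$ would, after normalization, furnish a nonzero element of $T^+$).

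Next I would extract a $\preceq$-minimal element of the compact set $T$; note that the relation $\preceq$ of Definition~\ref{def:binaryRelation} restricts to a genuine partial order on $T$ because $P$ is a pointed convex cone. I would do this via Zorn's lemma in its minimal-element form: for a chain $C \subseteq T$, the sets $T \cap (\mathbf{c}-P)$, $\mathbf{c}\in C$, are closed subsets of the compact set $T$ with the finite intersection property, since among finitely many $\mathbf{c}_1,\dots,\mathbf{c}_k \in C$ the $\preceq$-smallest one lies in every $T\cap(\mathbf{c}_i - P)$; their common intersection is therefore nonempty, and any point of it is a lower bound of $C$ in $T$. Zorn's lemma then yields a minimal element $\mathbf{y}^*$ of $T$. (Equivalently, since $P$ is pointed one could pick $\xi$ in the interior of the dual cone $P^*$ and let $\mathbf{y}^*$ minimize the strictly $P$-monotone functional $\langle\xi,\cdot\rangle$ over the compact set $T$.)

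Finally I would check that $\mathbf{y}^*$ is minimal not merely in $T$ but in all of $S$, i.e.\ that $\mathbf{y}^* \in \mathcal{E}(S,P)$. If not, there is $\mathbf{y}' \in S$ with $\mathbf{y}' \neq \mathbf{y}^*$ and $\mathbf{y}^* \in \mathbf{y}' + P$, so $\mathbf{y}' \in \mathbf{y}^* - P \subseteq (\bar{\mathbf{y}}-P)-P \subseteq \bar{\mathbf{y}}-P$ (using $P+P\subseteq P$); but then $\mathbf{y}' \in S\cap(\bar{\mathbf{y}}-P) = T$, contradicting the minimality of $\mathbf{y}^*$ in $T$. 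Hence $\mathbf{y}^* \in \mathcal{E}(S,P)$, and since $\mathbf{y}^* \in \bar{\mathbf{y}}-P$ we conclude $\bar{\mathbf{y}} \in \mathbf{y}^* + P \subseteq \mathcal{E}(S,P)+P$, which proves external stability. I expect the one genuinely delicate point to be the compactness of the section $T$: one must be sure that ``$P$-boundedness'', as encoded by $S^+\cap-P=\{\mathbf{0}\}$, really does force every lower section of $S$ to be bounded, which rests on reading $S^+$ as the asymptotic (recession) cone and on the elementary identity $(\bar{\mathbf{y}}-P)^+ = -P$; once $T$ is compact, the rest is the standard Zorn/finite-intersection argument.
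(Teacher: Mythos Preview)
The paper does not supply its own proof of this proposition: it is quoted verbatim as Theorem~3.2.10 from the cited monograph, with no argument given. There is therefore nothing in the paper to compare your proof against.

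That said, your argument is correct and is essentially the classical proof one finds in the multiobjective-optimization literature. The localization to the lower section $T=S\cap(\bar{\mathbf y}-P)$, the compactness of $T$ via $T^{+}\subseteq S^{+}\cap(-P)=\{\mathbf 0\}$, the extraction of a $\preceq$-minimal element by Zorn (or, as you note, by minimizing a strictly $P$-monotone linear functional over the compact $T$), and the transitivity step showing that minimality in $T$ forces minimality in $S$ are all sound. Your identification of the compactness of $T$ as the one substantive point is also accurate: the inclusion $(A\cap B)^{+}\subseteq A^{+}\cap B^{+}$ holds in general for nonempty closed intersections, and $(\bar{\mathbf y}-P)^{+}=-P$ follows from translation-invariance of the asymptotic cone together with the fact that a closed convex cone coincides with its own asymptotic cone, so the argument goes through as written.
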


\begin{corollary} \label{cor:extstabcompactset}
Let $K$ be a nonempty compact subset of $\mathbf{R}^p$. If $P$ is closed, then $K$ is externally stable.
\end{corollary}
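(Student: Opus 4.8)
The plan is simply to verify that a nonempty compact set meets every hypothesis of Proposition~\ref{prop:exter} and then invoke that result. Since $K$ is compact it is in particular closed, and $P$ is closed by assumption, so the only thing left to check is that $K$ is $P$-bounded, i.e. that $K^+ \cap -P = \{\mathbf{0}\}$, where $K^+$ denotes the recession (asymptotic) cone of $K$ in the sense of \cite[p.~52]{Tanino88_moo}.

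The crux is therefore the elementary fact that $K^+ = \{\mathbf{0}\}$ whenever $K$ is bounded. I would argue by contradiction: suppose $\mathbf{d} \in K^+$ with $\mathbf{d} \neq \mathbf{0}$. Unwinding the definition of the recession cone, there are sequences $\mathbf{y}_k \in K$ and scalars $\lambda_k > 0$ with $\lambda_k \to 0$ and $\lambda_k \mathbf{y}_k \to \mathbf{d}$ (equivalently, half-lines issuing from points of $K$ in the direction $\mathbf{d}$ remain in $K$, depending on the exact formulation adopted in \cite{Tanino88_moo}). Then $\|\mathbf{y}_k\| = \lambda_k^{-1}\|\lambda_k \mathbf{y}_k\| \to \infty$, which contradicts the boundedness of $K$. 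Hence $K^+ = \{\mathbf{0}\}$.

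Finally, because $P$ contains the origin we have $\mathbf{0} \in -P$, so $K^+ \cap -P = \{\mathbf{0}\} \cap -P = \{\mathbf{0}\}$, i.e. $K$ is $P$-bounded. Thus $K$ is a nonempty closed $P$-bounded set and $P$ is closed, and Proposition~\ref{prop:exter} immediately yields that $K$ is externally stable. The only point requiring any care is to match the precise definition of $K^+$ used in \cite{Tanino88_moo} when running the contradiction argument; under any of the standard equivalent formulations the conclusion $K^+ = \{\mathbf{0}\}$ for bounded $K$ is routine, so there is no genuine obstacle. (Alternatively, one could give a self-contained proof: for $\mathbf{y} \in K$ the set $(\mathbf{y} - P) \cap K$ is nonempty and compact, and a direct compactness/Zorn argument produces a minimal element of $K$ dominating $\mathbf{y}$; but routing through Proposition~\ref{prop:exter} is shorter.)
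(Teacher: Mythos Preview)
Your proposal is correct and is exactly the argument the paper has in mind: the corollary is stated without proof immediately after Proposition~\ref{prop:exter}, so the intended justification is precisely to note that a compact $K$ is closed with trivial recession cone $K^+=\{\mathbf{0}\}$, hence $P$-bounded, and then apply the proposition. There is nothing to add.
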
 \\

In this paper, for simplicity, we will only consider the case $P = \mathbf{R}^p_+.$

\section{Characterization of the set-valued return function} \label{s:charac}

In this section, we show that the epigraph of the set-valued return
function $V$, i.e., the graph of  the set-valued map $V +
\mathbf{R}^p_+$, is equal
to the viability kernel of a properly chosen closed set for some properly chosen dynamics.\\

Define the set-valued maps $\mathrm{FL^\sigma}$ from $\mathbf{R}^n$
to $\mathbf{R}^n \times \mathbf{R}^p$ by
$$\mathrm{FL^\sigma}(\mathbf{x}) = \overline{\mathrm{co}}\bigg(\bigcup_{\mathbf{u} \in U} \{(\mathbf{f}(\mathbf{x},\mathbf{u}),\sigma\mathbf{L}(\mathbf{x},\mathbf{u}))\}\bigg),$$ and
where $\overline{\mathrm{co}}(S)$ denotes the closure of the convex
hull of the set $S$. Observe that the set-valued map
$\mathrm{FL^\sigma}$ takes convex compact nonempty values. Moreover,
$\mathrm{FL^\sigma}$ is bounded by $M_{\mathrm{FL}} =
\max\{M_{\mathbf{f}},M_{\mathbf{L}}\}$ and Lipschitz with Lipschitz
constant $K_{\mathrm{FL}} =
 \max\{K_{\mathbf{f}},K_{\mathbf{L}}\}$. Denote $
\mathrm{FL^+}(\mathbf{x}) = \mathrm{FL^{+1}}(\mathbf{x})$ and $
\mathrm{FL^-}(\mathbf{x}) = \mathrm{FL^{-1}}(\mathbf{x})$. Define
finally the expanded set-valued map $\phi$ from $\mathbf{R} \times
\mathbf{R}^n \times \mathbf{R}^p$ to $\mathbf{R} \times \mathbf{R}^n
\times \mathbf{R}^p$ by

\begin{equation} \label{eq:expandedynamics}
\phi(t,\mathbf{x},\mathbf{z}) = \left \{ \begin{array}{lcl} \{1\} \times \mathrm{FL^-}(\mathbf{x})& \mathrm{if}  &  t < T,\\  {[}0,1{]} \times \overline{\mathrm{co}}(\mathrm{FL^-}(\mathbf{x}) \cup \{(\mathbf{0},\mathbf{0})\}) & \mathrm{if} & t \geq T.\end{array}\right.
\end{equation} \\
It is easy to see that $\phi$ is a Marchaud map (Definition 2.2, p.~184, \cite{Cardaliaguet99_Via}) bounded
by $\max \{1,M_{\mathrm{FL}}\}$.\\ \\

Consider now the differential inclusion
\begin{equation} \label{eq:diffinclusionexpanded}(\dot{t}(s),\mathbf{\dot{x}}(s),\mathbf{\dot{z}}(s)) \in \phi(t(s),\mathbf{x}(s),\mathbf{z}(s)) \ a.e.
 \ s \geq 0,\end{equation} and the closed set $\mathcal{H} = \{ (t,\mathbf{x},\mathbf{z}), \ t \in [0,T], \ \mathbf{x} \in \mathbf{R}^n, \mathbf{z} \in \{-(T-t)M_\mathbf{L} \mathbf{1}\}+ \mathbf{R}^p_+$.\\

\begin{proposition}  \label{prop:char}
The epigraph of the  set-valued map $V$ is equal to the viability
kernel of $\mathcal{H}$ for $\phi$, i.e.,
$$\mathrm{Epi}(V) = \mathrm{Viab}_\phi(\mathcal{H}),$$ where $\mathrm{Epi}(V)$ is defined as
$\mathrm{Epi}(V) = \mathrm{Graph}(V + \mathbf{R}^p_+).$
\end{proposition}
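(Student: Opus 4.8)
The plan is to prove the two inclusions $\mathrm{Epi}(V) \subseteq \mathrm{Viab}_\phi(\mathcal{H})$ and $\mathrm{Viab}_\phi(\mathcal{H}) \subseteq \mathrm{Epi}(V)$ separately, using the characterization of the viability kernel as the largest closed subset of $\mathcal{H}$ that is viable under the differential inclusion \eqref{eq:diffinclusionexpanded}, together with the invariance/viability of $\mathcal{H}$ itself. The first key observation I would record is a correspondence between trajectories of the original control system and trajectories of the expanded inclusion: if $\mathbf{u}(\cdot) \in \mathcal{U}$ and $\mathbf{x}(\cdot)$ solves \eqref{eq:nec1} from $(t,\mathbf{x})$, then $s \mapsto (t+s,\ \mathbf{x}(t+s),\ \mathbf{z_0} - \int_t^{t+s}\mathbf{L}(\mathbf{x}(r),\mathbf{u}(r))\,\mathrm{d}r)$ is an absolutely continuous solution of \eqref{eq:diffinclusionexpanded}, because the first component has derivative $1$, the middle component has derivative $\mathbf{f}(\mathbf{x},\mathbf{u}) \in \mathrm{FL}^-(\mathbf{x})$-first-coordinate, and the last has derivative $-\mathbf{L}(\mathbf{x},\mathbf{u})$, which together lie in $\{1\}\times\mathrm{FL}^-(\mathbf{x})$ for $t<T$. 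Conversely, by a measurable selection argument (Filippov's lemma) applied to the compact-convex-valued map $\mathrm{FL}^-$, every trajectory of \eqref{eq:diffinclusionexpanded} that stays in the region $t<T$ can be realized, up to the convexification, by a relaxed control; I would invoke the fact that $Y(t,\mathbf{x})$ and $\mathrm{cl}(Y(t,\mathbf{x}))$ are unchanged under relaxation (this is where the $\overline{\mathrm{co}}$ in $\mathrm{FL}^\sigma$ and the boundedness assumptions \eqref{eq:fbounded}, \eqref{eq:Lbounded} are used).

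For the inclusion $\mathrm{Epi}(V) \subseteq \mathrm{Viab}_\phi(\mathcal{H})$: take $(t,\mathbf{x},\mathbf{z}) \in \mathrm{Epi}(V)$, so $\mathbf{z} \in V(t,\mathbf{x}) + \mathbf{R}_+^p = \mathcal{E}(\mathrm{cl}(Y(t,\mathbf{x}))) + \mathbf{R}_+^p$, which by external stability (Corollary \ref{cor:extstabcompactset}, since $\mathrm{cl}(Y(t,\mathbf{x}))$ is compact) equals $\mathrm{cl}(Y(t,\mathbf{x})) + \mathbf{R}_+^p$. Hence there is an admissible control whose cost $\mathbf{J}(t,\mathbf{x},\mathbf{u}(\cdot))$ satisfies $\mathbf{z} \geq \mathbf{J}(t,\mathbf{x},\mathbf{u}(\cdot))$ (up to an $\e$-approximation handled by closedness of the viability kernel). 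Running the corresponding expanded trajectory until $s = T$ keeps us in $\mathcal{H}$ — the $\mathbf{z}$-component decreases from $\mathbf{z}$ by exactly the accumulated running cost and the constraint $\mathbf{z}(s) \in \{-(T-s)M_\mathbf{L}\mathbf{1}\}+\mathbf{R}_+^p$ is preserved because of \eqref{eq:Lbounded}. At $s=T$, the remaining $\mathbf{z}(T) \geq \mathbf{0}$ lies in $\mathcal{H}$; there we switch to the second branch of $\phi$, using the velocity $(0,\mathbf{0},\mathbf{0}) \in \phi$ to stay put forever. This produces a complete trajectory in $\mathcal{H}$, so $(t,\mathbf{x},\mathbf{z}) \in \mathrm{Viab}_\phi(\mathcal{H})$.

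For the reverse inclusion $\mathrm{Viab}_\phi(\mathcal{H}) \subseteq \mathrm{Epi}(V)$: take $(t,\mathbf{x},\mathbf{z})$ in the viability kernel with a viable trajectory $(t(\cdot),\mathbf{x}(\cdot),\mathbf{z}(\cdot))$. Since $\dot t = 1$ while $t<T$, after time $T-t$ we reach $t(s)=T$, and the constraint $\mathbf{z}(s)\in\{-(T-s)M_\mathbf{L}\mathbf{1}\}+\mathbf{R}_+^p$ forces $\mathbf{z}(T)\in\mathbf{R}_+^p$, i.e., $\mathbf{z}(T)\geq\mathbf{0}$. The portion of the trajectory over $[t,T]$ yields, via the relaxation-invariance of the objective space, a point $\mathbf{y}\in\mathrm{cl}(Y(t,\mathbf{x}))$ with $\mathbf{z} = \mathbf{y} + \mathbf{z}(T) \geq \mathbf{y}$ in the componentwise order. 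Thus $\mathbf{z} \in \mathrm{cl}(Y(t,\mathbf{x})) + \mathbf{R}_+^p = \mathcal{E}(\mathrm{cl}(Y(t,\mathbf{x}))) + \mathbf{R}_+^p = V(t,\mathbf{x}) + \mathbf{R}_+^p$, again invoking external stability of the compact set $\mathrm{cl}(Y(t,\mathbf{x}))$. Hence $(t,\mathbf{x},\mathbf{z}) \in \mathrm{Epi}(V)$.

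The main obstacle I anticipate is the rigorous treatment of the relaxation step, i.e., identifying the reachable $\mathbf{z}(T)$-values of the convexified inclusion \eqref{eq:diffinclusionexpanded} with $-\mathrm{cl}(Y(t,\mathbf{x}))$ shifted appropriately: one must show that convexifying the velocity set $\{(\mathbf{f}(\mathbf{x},\mathbf{u}),-\mathbf{L}(\mathbf{x},\mathbf{u}))\}$ does not enlarge the closure of the attainable set of cost vectors. This is a standard relaxation theorem (the Filippov–Ważewski / chattering lemma argument, using the continuity, boundedness \eqref{eq:fbounded}, \eqref{eq:Lbounded}, and Lipschitz properties \eqref{eq:lipf}, \eqref{eq:LLipschitz}), but it requires care to state precisely in the set-valued cost setting; I would cite the relevant result from \cite{Book_Via} or \cite{OptConTh8_book} rather than reprove it. A secondary technical point is the handling of the $\e$-slack when $\mathbf{z}$ only $\e$-dominates an actual cost vector (because $Y(t,\mathbf{x})$ need not be closed): this is absorbed by the fact that $\mathrm{Viab}_\phi(\mathcal{H})$ is closed, so it suffices to show every point of a dense subset of $\mathrm{Epi}(V)$ — or every point with $\mathbf{z}$ strictly dominating a genuine cost — lies in the kernel.
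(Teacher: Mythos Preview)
Your proposal is correct and follows essentially the same approach as the paper: both arguments split into the two inclusions, invoke external stability to identify $V(t,\mathbf{x})+\mathbf{R}^p_+$ with $\mathrm{cl}(Y(t,\mathbf{x}))+\mathbf{R}^p_+$, use a relaxation theorem to pass between trajectories of the convexified inclusion and true control trajectories, and construct the viable trajectory by running the expanded system until time $T$ and then freezing via the $(0,\mathbf{0},\mathbf{0})$ branch of $\phi$. The only minor difference is that where you absorb the $\e$-slack in the forward inclusion by appealing to closedness of $\mathrm{Viab}_\phi(\mathcal{H})$, the paper instead passes to a limit trajectory of the convexified inclusion via a Compactness of Trajectories theorem; both devices work and you have correctly identified the relaxation step as the place requiring care.
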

\begin{proof}
First, we prove the inclusion
$$ \mathrm{Viab}_\phi(\mathcal{H}) \subset \mathrm{Graph}(V + \mathbf{R}^p_+).$$
Take $(t,\mathbf{x},\mathbf{z}) \in \mathrm{Viab}_\phi(\mathcal{H})$. Hence, $(t,\mathbf{x},\mathbf{z}) \in \mathcal{H}$, or $t \in I$ and $\mathbf{z} \in  \{-(T-t)M_\mathbf{L} \mathbf{1}\}+ \mathbf{R}^p_+$.\\
\begin{itemize}
  \item Assume that $t = T$. Then, as  $V(T,\mathbf{x}) = \{\mathbf{0}\}$, it follows that $\mathbf{z} \in \mathbf{R}^p_+ = V(T,\mathbf{x}) + \mathbf{R}^p_+$.
  \item Assume that $t \in [0,T)$. Let $(t(\cdot),\mathbf{x}(\cdot),\mathbf{z}(\cdot))$ be a solution to (\ref{eq:diffinclusionexpanded}) with initial condition $(t,\mathbf{x},\mathbf{z})$ which remains in $\mathcal{H}$. By definition of $\phi$,
     $(\mathbf{x}(\cdot),\mathbf{z}(\cdot))$ is a solution
      to the differential inclusion
\begin{equation} \nonumber
 \left \{ \begin{array}{lcl} (\mathbf{\dot{x}}(s),\mathbf{\dot{z}}(s)) & \in & \mathrm{FL^-}(\mathbf{x}(s)) \ a.e. \ s \in [0, T-t], \\ \mathbf{x}(0) & = & \mathbf{x},\\
\mathbf{z}(0) & = & \mathbf{z},\\\end{array}\right.
\end{equation}
and $t(s) = s + t$. Let $s' = s + t$. For $s' \in [t,T]$, define $\mathbf{x'}(s') = \mathbf{x}(s'-t)$
and $\mathbf{z'}(s') = \mathbf{z}(s'-t).$ Then, $(\mathbf{x'}(\cdot),\mathbf{z'}(\cdot))$ is a solution
      to the differential inclusion
\begin{equation} \nonumber
 \left \{ \begin{array}{lcl} (\mathbf{\dot{x}'}(s),\mathbf{\dot{z}'}(s')) & \in & \mathrm{FL^-}(\mathbf{x'}(s')) \ a.e. \ s' \in [t, T], \\ \mathbf{x'}(t) & = & \mathbf{x},\\
\mathbf{z'}(t) & = & \mathbf{z},\\\end{array}\right.
\end{equation}
By the Relaxation Theorem (Theorem 2.7.2, \cite[p.~96]{OptConTh5_book}), for $\epsilon >0$, there exists $\mathbf{u}(\cdot) \in \mathcal{U}$ such that
$$\| \mathbf{x'}(\cdot) - \mathbf{x}(\cdot;t,\mathbf{x},\mathbf{u}(\cdot)) \| \leq \epsilon \ \mathrm{and} \
\|\mathbf{z'}(\cdot) - \mathbf{z}(\cdot;t,(\mathbf{x},\mathbf{z}),\mathbf{u}(\cdot))\|\leq \epsilon.$$
In particular, we get
$$ \mathbf{z'}(T) \leq \mathbf{z}(T;t,(\mathbf{x},\mathbf{z}),\mathbf{u}(\cdot)) + \epsilon \mathbf{1}.$$
Moreover, as $(s',\mathbf{x'}(s'),\mathbf{z'}(s')) \in \mathcal{H}$ for all $s' \in [t,T],$ we have
$$\mathbf{z'}(s') \in  \{-(T-s')M_\mathbf{L} \mathbf{1}\}+ \mathbf{R}^p_+,$$
or
$$\mathbf{z'}(T) \in  \mathbf{R}^p_+.$$
Hence,
\begin{equation} \nonumber
 \begin{array}{lcl}  \mathbf{z}(T;t,(\mathbf{x},\mathbf{z}),\mathbf{u}(\cdot)) & = & \mathbf{z} -  \displaystyle \int_t^{T} \mathbf{L}(\mathbf{x}(s';t,\mathbf{x},\mathbf{u}(\cdot)),\mathbf{u}(s')) \ \mathrm{d} s' \geq \mathbf{z'}(T) - \epsilon \mathbf{1} \geq -\epsilon \mathbf{1},\\  \end{array}
\end{equation}
or
$$ \mathbf{z} + \epsilon \mathbf{1} \geq \displaystyle \int_t^{T} \mathbf{L}(\mathbf{x}(s';t,\mathbf{x},\mathbf{u}(\cdot)),\mathbf{u}(s')) \ \mathrm{d} s',$$
which implies that $\mathbf{z} \in \mathrm{cl}(Y(t,\mathbf{x})) +
\mathbf{R}^p_+ = V(t,\mathbf{x}) + \mathbf{R}^p_+$ by external
stability.\newline
\end{itemize}

Second, we prove the inclusion
$$ \mathrm{Graph}(V +  \mathbf{R}^p_+) \subset \mathrm{Viab}_\phi(\mathcal{H}).$$
Take $(t,\mathbf{x},\mathbf{z}) \in \mathrm{Graph}(V +  \mathbf{R}^p_+)$. Hence, $t \in [0,T]$, from  (\ref{eq:inclusion}), $\mathbf{z} \in \{-(T-t)M_\mathbf{L}\mathbf{1}\} +   \mathbf{R}^p_+$.\\
\begin{itemize}
  \item Assume that $t = T$. Then, $\mathbf{z} \in \mathbf{R}^p_+$. Therefore, $(t(\cdot),\mathbf{x}(\cdot),\mathbf{z}(\cdot)) = (T,\mathbf{x},\mathbf{z})$ is a solution  to (\ref{eq:diffinclusionexpanded}) viable in $\mathcal{H}$.
  \item Assume that $t \in [0,T)$. We have $\mathbf{z} = \mathbf{z'} + \mathbf{d}$, where $\mathbf{z'} \in V(t,\mathbf{x})$ and $\mathbf{d} \in \mathbf{R}^p_+$. By definition of  $V(t,\mathbf{x})$, there exists a sequence $\mathbf{u}_n(\cdot) \in \mathcal{U}$ such that
\begin{equation} \label{eq:conv} \lim_{n \rightarrow +\infty} \displaystyle \int_t^T \mathbf{L}(\mathbf{x}_n(s;t,\mathbf{x},\mathbf{u}_n(\cdot)),\mathbf{u}_n(s)) \ \mathrm{d}s = \mathbf{z'}.\end{equation} Using the Compactness of Trajectories theorem (Theorem 2.5.3, \cite[p.~89]{OptConTh5_book}) and by passing to a subsequence if necessary, we can therefore assume that
      there exists $(\mathbf{x}(\cdot),\mathbf{z}(\cdot))$ solution on $[t,T]$ to the differential inclusion
$$(\mathbf{\dot{x}}(s'),\mathbf{\dot{z}}(s')) \in \mathrm{FL^+}(\mathbf{x}(s')) \ a.e. \ s' \in [t,T]$$
with initial condition $(\mathbf{x},\mathbf{0})$
such that $$\lim_{n \rightarrow +\infty} \|\mathbf{x}_n(\cdot;t,\mathbf{x},\mathbf{u}_n(\cdot)) - \mathbf{x}(\cdot)\| =  0,$$
      and \begin{equation} \label{eq:zlim} \lim_{n \rightarrow +\infty} \|\mathbf{z}_n(\cdot;t,(\mathbf{x},\mathbf{0}),\mathbf{u}_n(\cdot)) - \mathbf{z}(\cdot)\| =  0.\end{equation}
\end{itemize}
From (\ref{eq:conv}) and (\ref{eq:zlim}), we deduce that $ \mathbf{z}(T) = \mathbf{z'}.$\\

Define now $(\mathbf{x'}(\cdot),\mathbf{z'}(\cdot))$ as follows:
      \begin{equation} \nonumber
      (t(s),\mathbf{x'}(s),\mathbf{z'}(s)) = \left \{ \begin{array}{ll} (s+t,\mathbf{x}(s+t),\mathbf{z}- \mathbf{z}(s+t)) & s \in [0,T-t],\\
      (T,\mathbf{x}(T),\mathbf{d})& s > T-t. \end{array}\right.
      \end{equation}
As $\mathbf{z'}(T-t) = \mathbf{z}- \mathbf{z}(T) = \mathbf{z}- \mathbf{z'} = \mathbf{d},$ it follows that $(t(\cdot),\mathbf{x'}(\cdot),\mathbf{z'}(\cdot))$ is a solution of (\ref{eq:diffinclusionexpanded}). It
remains to check that $(t(s),\mathbf{x'}(s),\mathbf{z'}(s))\in \mathcal{H}$ for all $s \geq 0$. For $s > T-t$,
as $\mathbf{d} \in \mathbf{R}^p_+$, this is obvious. For $s \in [0,T-t]$, using the definition
of $\mathbf{z}$,  (\ref{eq:conv}), and (\ref{eq:zlim}), we have
$$
\mathbf{z'}(s) = \lim_{n \rightarrow +\infty} \displaystyle \int_t^T \mathbf{L}(\mathbf{x}_n(s;t,\mathbf{x},\mathbf{u}_n(\cdot)),\mathbf{u}_n(s)) \ \mathrm{d}s  + \mathbf{d}  -  \mathbf{z}_n(s+t;t,(\mathbf{x},\mathbf{0}),\mathbf{u}_n(\cdot)).
$$
As
$$\mathbf{z}_n(s+t;t,(\mathbf{x},\mathbf{0}),\mathbf{u}_n(\cdot)) =  \displaystyle \int_t^{s+t} \mathbf{L}(\mathbf{x}_n(s;t,\mathbf{x},\mathbf{u}_n(\cdot)),\mathbf{u}_n(s)) \ \mathrm{d}s,$$
we get
$$
\mathbf{z'}(s) =  \mathbf{d} + \lim_{n \rightarrow +\infty} \displaystyle \int_{s+t}^T \mathbf{L}(\mathbf{x}_n(s;t,\mathbf{x},\mathbf{u}_n(\cdot)),\mathbf{u}_n(s)) \ \mathrm{d}s.
$$
As
$$\int_{s+t}^T \mathbf{L}(\mathbf{x}_n(s;t,\mathbf{x},\mathbf{u}_n(\cdot)),\mathbf{u}_n(s)) \ \mathrm{d}s \geq -(T-(s+t))M_\mathbf{L}\mathbf{1}$$ and $\mathbf{d} \in  \mathbf{R}^p_+,$
we finally obtain $\mathbf{z'}(s) \geq
-(T-t(s))M_\mathbf{L}\mathbf{1}.$ Hence,
$(t(s),\mathbf{x'}(s),\mathbf{z'}(s))\in \mathcal{H}$ and
$(t(\cdot),\mathbf{x'}(\cdot),\mathbf{z'}(\cdot))$ is viable in
$\mathcal{H}.$\\
\end{proof}

\begin{remark} \label{rem:biggerM}
Proposition \emph{\ref{prop:char}} remains valid if we take for
$\mathcal{H}$ the closed set $\{ (t,\mathbf{x},\mathbf{z}), \ t \in
[0,T], \ \mathbf{x} \in \mathbf{R}^n, \mathbf{z} \in
\{-(T-t)\overline{M}_\mathbf{L} \mathbf{1}\}+ \mathbf{R}^p_+$, where
$\overline{M}_\mathbf{L} > M_\mathbf{L}.$ This remark will be used
in \emph{\S \ref{s:conv}}.
\end{remark}

\section{Approximation of $\mathrm{Viab}_\phi(\mathcal{H})$} \label{s:approx}

In this section, we approximate $\mathrm{Viab}_\phi(\mathcal{H})$ by finite discrete viability kernels. A preliminary step is to approximate $\mathrm{Viab}_\phi(\mathcal{H})$ by discrete viability kernels. Our developments  closely follow  \cite{Cardaliaguet99_Via}.

\subsection{Approximation of $\mathrm{Viab}_\phi(\mathcal{H})$ by discrete viability kernels}

In \cite{Cardaliaguet99_Via} (Theorem 2.14, p.\ 190), it is shown that $\mathrm{Viab}_\phi(\mathcal{H})$
can be approximated by discrete viability kernels in the sense of Painlev\'e-Kuratowski by considering an approximation $\phi_\epsilon$ of $\phi$ satisfying the  following  three properties:\\
\begin{description}
  \item[(\emph{$\mathbf{H_0}$})] $\phi_\epsilon$ is an upper semicontinuous set-valued map from $\mathbf{R} \times \mathbf{R}^n  \times \mathbf{R}^p$ to $\mathbf{R} \times \mathbf{R}^n  \times \mathbf{R}^p$ which takes
    convex compact nonempty values.
  \item[(\emph{$\mathbf{H_1}$})] $$\mathrm{Graph}(\phi_\epsilon) \subset \mathrm{Graph}(\phi) + g(\epsilon)\mathbf{B} \ \mathrm{where} \ \lim_{\epsilon \rightarrow 0^+} g(\epsilon) = 0^+.$$
  \item[(\emph{$\mathbf{H_2}$})] $$\forall (t_\epsilon,\mathbf{x}_\epsilon,\mathbf{z}_\epsilon) \in \mathbf{R} \times \mathbf{R}^n \times \mathbf{R}^p, \ \bigcup_{\| (t,\mathbf{x},\mathbf{z}) - (t_\epsilon,\mathbf{x}_\epsilon,\mathbf{z}_\epsilon)\| \leq M \epsilon} \phi(t,\mathbf{x},\mathbf{z}) \subset \phi_\epsilon(t_\epsilon,\mathbf{x}_\epsilon,\mathbf{z}_\epsilon),$$
\end{description}
where $M = \max\{1,M_{\mathrm{FL}}\}$ denotes a bound for $\phi$ and $\epsilon > 0$ is the time step discretization.\\

Define the set-valued map $\phi_\epsilon, \ \epsilon > 0,$ from $\mathbf{R} \times \mathbf{R}^n  \times \mathbf{R}^p$
to $\mathbf{R} \times\mathbf{R}^n  \times \mathbf{R}^p$ by

\begin{equation} \label{eq:expandedynamicstimediscre}
\phi_\epsilon(t,\mathbf{x},\mathbf{z}) = \left \{ \begin{array}{lll} \{1\} \times (\mathrm{FL^-}(\mathbf{x}) + \epsilon KM \mathbf{B})& \mathrm{if}  & t < T - M \epsilon,\\ {[}0,1{]} \times \overline{\mathrm{co}}((\mathrm{FL^-}(\mathbf{x})+ \epsilon KM \mathbf{B}) \cup \{(\mathbf{0},\mathbf{0})\}) & \mathrm{if}  & t \geq T - M \epsilon.\end{array}\right.
\end{equation} \\

\begin{theorem}
The set-valued map $\phi_\epsilon$ satisfies \emph{($\mathbf{H_0}$)}, \emph{($\mathbf{H_1}$)}, and \emph{($\mathbf{H_2}$)}.
\end{theorem}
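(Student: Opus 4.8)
The plan is to verify each of the three properties $(\mathbf{H_0})$, $(\mathbf{H_1})$, $(\mathbf{H_2})$ in turn, reducing everything to properties of $\mathrm{FL^-}$ that were already established (convex compact values, bounded by $M_{\mathrm{FL}}$, Lipschitz with constant $K_{\mathrm{FL}}$). Throughout I write $K = K_{\mathrm{FL}}$ and $M = \max\{1,M_{\mathrm{FL}}\}$ to match the displayed formula for $\phi_\epsilon$.

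First, for $(\mathbf{H_0})$: on each of the two regions $\{t < T - M\epsilon\}$ and $\{t \geq T - M\epsilon\}$ the values of $\phi_\epsilon$ are convex compact nonempty — a closed ball added to a convex compact set is convex compact, the product with $\{1\}$ or $[0,1]$ preserves this, and the closed convex hull of the union of two convex compact sets is again convex compact. Upper semicontinuity needs a short argument at the interface $t = T - M\epsilon$: one checks that at such a point the value $[0,1] \times \overline{\mathrm{co}}((\mathrm{FL^-}(\mathbf{x}) + \epsilon KM\mathbf{B}) \cup \{(\mathbf{0},\mathbf{0})\})$ contains $\{1\} \times (\mathrm{FL^-}(\mathbf{x}) + \epsilon KM\mathbf{B})$, so the graph is closed there; combined with the Lipschitz (hence continuous) dependence of $\mathrm{FL^-}$ on $\mathbf{x}$ and boundedness, closed graph plus local boundedness gives u.s.c.

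Second, for $(\mathbf{H_1})$: given $(t,\mathbf{x},\mathbf{z})$ and a point $(\tau, \mathbf{v}, \mathbf{w}) \in \phi_\epsilon(t,\mathbf{x},\mathbf{z})$, I must exhibit a point of $\mathrm{Graph}(\phi)$ within $g(\epsilon)\mathbf{B}$. In the region $t < T - M\epsilon$ we have $t < T$, so $\phi(t,\mathbf{x},\mathbf{z}) = \{1\}\times\mathrm{FL^-}(\mathbf{x})$, and $(\tau,\mathbf{v},\mathbf{w}) = (1, \mathbf{a} + \epsilon K M \mathbf{b})$ with $\mathbf{a}\in\mathrm{FL^-}(\mathbf{x})$, $\|\mathbf{b}\|\le 1$; then $(t,\mathbf{x},\mathbf{z},1,\mathbf{a})\in\mathrm{Graph}(\phi)$ is at distance $\le \epsilon K M$. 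In the region $t \geq T - M\epsilon$, the nearby genuine point of $\mathrm{Graph}(\phi)$ is obtained by moving the time coordinate to $T$ (a shift of at most $M\epsilon$) so that the second branch of $\phi$ applies, and then using that $\overline{\mathrm{co}}((\mathrm{FL^-}(\mathbf{x})+\epsilon KM\mathbf{B})\cup\{(\mathbf{0},\mathbf{0})\})$ is within Hausdorff distance $\epsilon KM$ of $\overline{\mathrm{co}}(\mathrm{FL^-}(\mathbf{x})\cup\{(\mathbf{0},\mathbf{0})\})$ (adding the same ball to one of the sets being hulled perturbs the closed convex hull by at most the ball's radius). So $g(\epsilon) = (M + \epsilon KM) + \epsilon KM$ or any such expression works, and $g(\epsilon)\to 0^+$.

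Third — and this is the step I expect to be the real point of the lemma — property $(\mathbf{H_2})$: for any $(t_\epsilon,\mathbf{x}_\epsilon,\mathbf{z}_\epsilon)$ and any $(t,\mathbf{x},\mathbf{z})$ with $\|(t,\mathbf{x},\mathbf{z}) - (t_\epsilon,\mathbf{x}_\epsilon,\mathbf{z}_\epsilon)\| \le M\epsilon$, I must show $\phi(t,\mathbf{x},\mathbf{z}) \subset \phi_\epsilon(t_\epsilon,\mathbf{x}_\epsilon,\mathbf{z}_\epsilon)$. The key facts are: $\|\mathbf{x} - \mathbf{x}_\epsilon\| \le M\epsilon$ (supremum norm), so by the Lipschitz property $\mathrm{FL^-}(\mathbf{x}) \subset \mathrm{FL^-}(\mathbf{x}_\epsilon) + K M\epsilon\,\mathbf{B}$; and $|t - t_\epsilon| \le M\epsilon$. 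I split on cases according to where $t$ and $t_\epsilon$ lie relative to the thresholds $T$ and $T - M\epsilon$. If $t_\epsilon < T - M\epsilon$, then $t \le t_\epsilon + M\epsilon < T$, so $\phi(t,\mathbf{x},\mathbf{z}) = \{1\}\times\mathrm{FL^-}(\mathbf{x}) \subset \{1\}\times(\mathrm{FL^-}(\mathbf{x}_\epsilon) + \epsilon KM\mathbf{B}) = \phi_\epsilon(t_\epsilon,\ldots)$ — this is exactly why the $\epsilon KM\mathbf{B}$ inflation and the shifted threshold $T - M\epsilon$ were chosen. If $t_\epsilon \ge T - M\epsilon$, then $\phi_\epsilon(t_\epsilon,\ldots)$ is the larger second-branch set, and whichever branch of $\phi(t,\mathbf{x},\mathbf{z})$ applies ($t<T$ gives $\{1\}\times\mathrm{FL^-}(\mathbf{x})$, which sits inside $[0,1]\times\overline{\mathrm{co}}((\mathrm{FL^-}(\mathbf{x}_\epsilon)+\epsilon KM\mathbf{B})\cup\{(\mathbf{0},\mathbf{0})\})$ since $\{1\}\subset[0,1]$ and $\mathrm{FL^-}(\mathbf{x})\subset\mathrm{FL^-}(\mathbf{x}_\epsilon)+\epsilon KM\mathbf{B}\subset\overline{\mathrm{co}}(\cdots)$; $t\ge T$ gives the second branch of $\phi$, which is contained similarly using monotonicity of $\overline{\mathrm{co}}$ under inclusion). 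The only delicate bookkeeping is making sure the constant multiplying $\mathbf{B}$ inside $\phi_\epsilon$, namely $\epsilon KM$, is at least $K\cdot(M\epsilon)$, which it is with equality; so no slack is wasted and the inclusion is tight. I would present $(\mathbf{H_2})$ as the main case analysis, with $(\mathbf{H_0})$ and $(\mathbf{H_1})$ dispatched quickly first.
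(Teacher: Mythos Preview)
Your proposal is correct and follows essentially the same route as the paper: a case split on the position of $t_\epsilon$ relative to $T-M\epsilon$ for both $(\mathbf{H_1})$ and $(\mathbf{H_2})$, with the Lipschitz estimate $\mathrm{FL^-}(\mathbf{x})\subset\mathrm{FL^-}(\mathbf{x}_\epsilon)+\epsilon KM\mathbf{B}$ doing all the work in $(\mathbf{H_2})$, and for $(\mathbf{H_1})$ in the region $t_\epsilon\ge T-M\epsilon$ the move of the base time to some $t\ge T$ followed by the convex-hull perturbation bound. Your treatment of $(\mathbf{H_0})$ is in fact a bit more explicit than the paper's about upper semicontinuity at the interface.

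One slip to fix: in your expression $g(\epsilon)=(M+\epsilon KM)+\epsilon KM$ the bare $M$ should be $M\epsilon$ (you yourself said the time shift is ``at most $M\epsilon$''); as written your $g(\epsilon)$ does not tend to $0$. With the sup norm the paper gets the tighter $g(\epsilon)=\epsilon M\max\{1,K\}$, since the base-point shift (size $M\epsilon$) and the value shift (size $\epsilon KM$) sit in different coordinates.
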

\begin{proof}
\begin{description}
  \item[(\emph{$\mathbf{H_0}$})] This follows  from the properties of the set-valued map $\mathrm{FL^-}$
  and the fact that $\mathrm{FL^-}(\mathbf{x}) + \epsilon KM \mathbf{B} \subset \overline{\mathrm{co}}((\mathrm{FL^-}(\mathbf{x})+ \epsilon KM \mathbf{B}) \cup \{(\mathbf{0},\mathbf{0})\})$.\\
  \item[(\emph{$\mathbf{H_1}$})] This relation holds with $g(\epsilon) =  \epsilon \max \{1,K\}M.$ \\ \\
Let $(t_\epsilon,\mathbf{x}_\epsilon,\mathbf{z}_\epsilon) \in \mathbf{R} \times \mathbf{R}^n \times \mathbf{R}^p$ and $(s_\epsilon,\mathbf{f}_\epsilon,\mathbf{l}_\epsilon) \in \phi_\epsilon(t_\epsilon,\mathbf{x}_\epsilon,\mathbf{z}_\epsilon).$ Consider the following two cases: \\
\begin{itemize}
  \item $t_\epsilon < T - M \epsilon.$ Then, $s_\epsilon = 1$ and $(\mathbf{f}_\epsilon,\mathbf{l}_\epsilon)
\in \mathrm{FL^-}(\mathbf{x}_\epsilon) + \epsilon KM \mathbf{B}.$ Hence, $g(\epsilon) =  \epsilon KM.$
  \item  $t_\epsilon \geq T - M \epsilon.$ We have $s_\epsilon \in [0,1]$ and $(\mathbf{f}_\epsilon,\mathbf{l}_\epsilon) \in \overline{\mathrm{co}}((\mathrm{FL^-}(\mathbf{x_\epsilon})+ \epsilon KM \mathbf{B}) \cup \{(\mathbf{0},\mathbf{0})\}).$ There exists $t \geq T$ such that $|t-t_\epsilon| \leq M \epsilon.$ We have
   \begin{equation} \nonumber
 \begin{array}{rcl} (\mathbf{f}_\epsilon,\mathbf{l}_\epsilon)  & \in & \overline{\mathrm{co}}((\mathrm{FL^-}(\mathbf{x}_\epsilon) +  \epsilon KM\mathbf{B}) \cup \{(\mathbf{0},\mathbf{0})\}),\\
& \subset  & \overline{\mathrm{co}}(\mathrm{FL^-}(\mathbf{x}_\epsilon ) \cup \{(\mathbf{0},\mathbf{0})\}) +  \epsilon KM\mathbf{B}.
 \end{array}
\end{equation}
Hence, $g(\epsilon) =  \epsilon \max \{1,K\}M.$\\

\end{itemize}

  \item[(\emph{$\mathbf{H_2}$})]
      Let $(t_\epsilon,\mathbf{x}_\epsilon,\mathbf{z}_\epsilon) \in \mathbf{R} \times \mathbf{R}^n \times \mathbf{R}^p$. Take $(t,\mathbf{x},\mathbf{z}) \in
      \mathbf{R} \times \mathbf{R}^n \times \mathbf{R}^p$ such that $\| (t,\mathbf{x},\mathbf{z}) - (t_\epsilon,\mathbf{x}_\epsilon,\mathbf{z}_\epsilon)\| \leq M \epsilon$. In particular, $\|\mathbf{x}-\mathbf{x}_\epsilon\| \leq M \epsilon$. Consider the two following cases:\\
\begin{itemize}
  \item $t_\epsilon < T - M\epsilon.$  This implies $t < T$. The Lipschitz property of $\mathrm{FL^-}$
yields:
$$\phi(t,\mathbf{x},\mathbf{z}) = \{1\} \times \mathrm{FL^-}(\mathbf{x}) \subset \{1\} \times (\mathrm{FL^-}(\mathbf{x}_\epsilon) + \epsilon KM \mathbf{B}) = \phi_\epsilon(t_\epsilon,\mathbf{x}_\epsilon,\mathbf{z}_\epsilon).$$

  \item $t_\epsilon \geq T - M\epsilon.$ Then, either $t < T.$ In such a case, as above,
$$\phi(t,\mathbf{x},\mathbf{z}) = \{1\} \times \mathrm{FL^-}(\mathbf{x}) \subset \{1\} \times (\mathrm{FL^-}(\mathbf{x}_\epsilon) + \epsilon KM \mathbf{B}) \subset \phi_\epsilon(t_\epsilon,\mathbf{x}_\epsilon,\mathbf{z}_\epsilon).$$
Either $t \geq T.$ In such a case, using the Lipschitz property of $\mathrm{FL^-}$, we have
$$\mathrm{FL^-}(\mathbf{x}) \cup \{(\mathbf{0},\mathbf{0})\} \subset (\mathrm{FL^-}(\mathbf{x}_\epsilon) + \epsilon KM \mathbf{B}) \cup \{(\mathbf{0},\mathbf{0})\}.$$
Hence,
$$ \phi(t,\mathbf{x},\mathbf{z}) = [0,1] \times \overline{\mathrm{co}}(\mathrm{FL^-}(\mathbf{x}) \cup \{(\mathbf{0},\mathbf{0})\}) \subset \phi_\epsilon(t_\epsilon,\mathbf{x}_\epsilon,\mathbf{z}_\epsilon).$$
\end{itemize}
\end{description}
\end{proof}

\subsection{Approximation of $\mathrm{Viab}_\phi(\mathcal{H})$ by discrete finite viability kernels}

In \cite{Cardaliaguet99_Via} (Theorem 2.19, p.\ 195), it is shown that $\mathrm{Viab}_\phi(\mathcal{H})$
can be approximated by finite discrete viability kernels in the sense of Painlev\'e-Kuratowski by considering an approximation $\Gamma_{\epsilon,h}$ of $G_\epsilon(\cdot)$ satisfying the  following two properties:\\
\begin{description}
  \item[(\emph{$\mathbf{H_3}$})] $$\mathrm{Graph}(\Gamma_{\epsilon,h}) \subset \mathrm{Graph}(G_\epsilon) + \psi(\epsilon,h)\mathbf{B} \ \mathrm{where} \ \lim_{\epsilon \rightarrow 0^+,\frac{h}{\epsilon}\rightarrow 0^+} \frac{\psi(\epsilon,h)}{\epsilon} = 0^+.$$
  \item[(\emph{$\mathbf{H_4}$})] $\forall (t_h,\mathbf{x_h},\mathbf{z_h}) \in \mathbf{R}_h \times \mathbf{R}^n_h \times \mathbf{R}^p_h,$ $$\bigcup_{\| (t_\epsilon,\mathbf{x}_\epsilon,\mathbf{z}_\epsilon) - (t_h,\mathbf{x_h},\mathbf{z_h})\| \leq h} (G_\epsilon(t_\epsilon,\mathbf{x}_\epsilon,\mathbf{z}_\epsilon)+h\mathbf{B})\cap \mathbf{R}_h \times \mathbf{R}^n_h \times \mathbf{R}^p_h \subset \Gamma_{\epsilon,h}(t_h,\mathbf{x_h},\mathbf{z_h}),$$
\end{description}
where $ h>0$ is the state step discretization, $\mathbf{R}_h$ is an
integer lattice of $\mathbf{R}$ generated by segments of length $h$,
$G_\epsilon$ is the set-valued map from $\mathbf{R} \times
\mathbf{R}^n \times \mathbf{R}^p$ to $\mathbf{R} \times \mathbf{R}^n
\times \mathbf{R}^p$ defined by
$$G_\epsilon(t_\epsilon,\mathbf{x}_\epsilon,\mathbf{z}_\epsilon) = \{(t_\epsilon,\mathbf{x}_\epsilon,\mathbf{z}_\epsilon)\} +  \epsilon \phi_\epsilon(t_\epsilon,\mathbf{x}_\epsilon,\mathbf{z}_\epsilon),$$
and $\Gamma_{\epsilon,h}$ is the set-valued map from $\mathbf{R}_h \times \mathbf{R}^n_h  \times \mathbf{R}^p_h$ to $\mathbf{R}_h \times \mathbf{R}^n_h  \times \mathbf{R}^p_h$
defined as follows.\\

\begin{itemize}
  \item $\mathrm{If} \  t_h < T - M \epsilon - h$, $ \Gamma_{\epsilon,h}(t_h,\mathbf{x_h},\mathbf{z_h}) = $
  \begin{equation} \label{eq:expandedynamicsstatediscre1}  [t_h+\epsilon-2h,t_h+\epsilon+2h] \cap \mathbf{R}_h \times (\{(\mathbf{x_h},\mathbf{z_h})\} + \epsilon
\mathrm{FL^-}(\mathbf{x_h}) + \alpha_{\epsilon,h}\mathbf{B}) \cap
\mathbf{R}^n_h  \times \mathbf{R}^p_h.\end{equation}
  \item $\mathrm{If} \  t_h \geq T - M \epsilon - h$, $ \Gamma_{\epsilon,h}(t_h,\mathbf{x_h},\mathbf{z_h}) = $
  \begin{equation}  \label{eq:expandedynamicsstatediscre2} {[}t_h,t_h+\epsilon+2h{]} \cap \mathbf{R}_h \times \overline{\mathrm{co}}((\{(\mathbf{x_h},\mathbf{z_h})\} + \epsilon
\mathrm{FL^-}(\mathbf{x_h}) + \alpha_{\epsilon,h}\mathbf{B}) \cup
(\{(\mathbf{x_h},\mathbf{z_h})\}+2h\mathbf{B})) \cap \mathbf{R}^n_h
\times \mathbf{R}^p_h,\end{equation}
where $ \alpha_{\epsilon,h} = 2h+\epsilon hK+\epsilon^2 KM $.\\
\end{itemize}
We assume that $\epsilon > 2h$.\\


\begin{theorem}
The set-valued map $\Gamma_{\epsilon,h}$ satisfies \emph{($\mathbf{H_3}$)} and \emph{($\mathbf{H_4}$)}.
\end{theorem}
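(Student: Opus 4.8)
The plan is to verify $(\mathbf{H_3})$ and $(\mathbf{H_4})$ separately, each by a direct computation that splits according to the position of the time variable relative to the two thresholds $T-M\epsilon$ (at which $\phi_\epsilon$, hence $G_\epsilon$, changes branch) and $T-M\epsilon-h$ (at which $\Gamma_{\epsilon,h}$ changes branch). The recurring tools are the Lipschitz property of $\mathrm{FL^-}$ with constant $K$, the identity $\overline{\mathrm{co}}(S+r\mathbf{B})=\overline{\mathrm{co}}(S)+r\mathbf{B}$, and the inclusion $\overline{\mathrm{co}}((A+r\mathbf{B})\cup C)\subset\overline{\mathrm{co}}(A\cup C)+r\mathbf{B}$.

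For $(\mathbf{H_3})$: given $(t',\mathbf{x'},\mathbf{z'})\in\Gamma_{\epsilon,h}(t_h,\mathbf{x_h},\mathbf{z_h})$, I would produce a nearby point of $\mathrm{Graph}(G_\epsilon)$. If $t_h<T-M\epsilon-h$, compare with $G_\epsilon(t_h,\mathbf{x_h},\mathbf{z_h})$: the time coordinate of $\Gamma_{\epsilon,h}$ differs from $t_h+\epsilon$ by at most $2h$, and its space coordinate lies in $\{(\mathbf{x_h},\mathbf{z_h})\}+\epsilon\mathrm{FL^-}(\mathbf{x_h})+\alpha_{\epsilon,h}\mathbf{B}$, i.e.\ in the corresponding set of $G_\epsilon$ fattened by $\alpha_{\epsilon,h}-\epsilon^2KM=2h+\epsilon hK$. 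If $t_h\ge T-M\epsilon$, compare with $G_\epsilon(t_h,\mathbf{x_h},\mathbf{z_h})$ again and use the convex-hull manipulations above to absorb the additional $\{(\mathbf{x_h},\mathbf{z_h})\}+2h\mathbf{B}$ vertex present in $\Gamma_{\epsilon,h}$. In the transitional band $T-M\epsilon-h\le t_h<T-M\epsilon$, where the two maps lie on different branches, compare instead with $G_\epsilon(T-M\epsilon,\mathbf{x_h},\mathbf{z_h})$, which costs an extra $|T-M\epsilon-t_h|\le h$ in the first graph coordinate. In each case the discrepancy is at most $\psi(\epsilon,h)=2h+\epsilon hK$, and $\psi(\epsilon,h)/\epsilon=2h/\epsilon+hK\to 0^+$ as $\epsilon\to0^+$, $h/\epsilon\to0^+$, which is $(\mathbf{H_3})$.

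For $(\mathbf{H_4})$: I would fix a lattice point $(t_h,\mathbf{x_h},\mathbf{z_h})$, take $(t_\epsilon,\mathbf{x}_\epsilon,\mathbf{z}_\epsilon)$ with $\|(t_\epsilon,\mathbf{x}_\epsilon,\mathbf{z}_\epsilon)-(t_h,\mathbf{x_h},\mathbf{z_h})\|\le h$, and take a lattice point $(t',\mathbf{x'},\mathbf{z'})\in G_\epsilon(t_\epsilon,\mathbf{x}_\epsilon,\mathbf{z}_\epsilon)+h\mathbf{B}$, written as $(t',\mathbf{x'},\mathbf{z'})=(t_\epsilon,\mathbf{x}_\epsilon,\mathbf{z}_\epsilon)+\epsilon(\sigma,\mathbf{w})+\mathbf{b}$ with $(\sigma,\mathbf{w})\in\phi_\epsilon(t_\epsilon,\mathbf{x}_\epsilon,\mathbf{z}_\epsilon)$ and $\|\mathbf{b}\|\le h$. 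For the space coordinate, replace the element of $\mathrm{FL^-}(\mathbf{x}_\epsilon)$ hidden in $\mathbf{w}$ by one in $\mathrm{FL^-}(\mathbf{x_h})$ at cost $\le Kh$ (Lipschitz) and add the four error contributions --- $h$ from $\|(\mathbf{x}_\epsilon,\mathbf{z}_\epsilon)-(\mathbf{x_h},\mathbf{z_h})\|$, $\epsilon Kh$ from the Lipschitz step, $\epsilon^2KM$ from the fattening already built into $\phi_\epsilon$, and $h$ from $\mathbf{b}$ --- whose sum is exactly $\alpha_{\epsilon,h}=2h+\epsilon hK+\epsilon^2KM$; this is precisely the calibration of $\alpha_{\epsilon,h}$. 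When $t_\epsilon\ge T-M\epsilon$ the $(\mathbf{0},\mathbf{0})$ vertex of the convex hull defining $\phi_\epsilon$ corresponds to keeping $(\mathbf{x},\mathbf{z})$ fixed, and after the perturbation $\mathbf{b}$ such a point lies within $2h$ of $(\mathbf{x_h},\mathbf{z_h})$, hence in the $\{(\mathbf{x_h},\mathbf{z_h})\}+2h\mathbf{B}$ vertex of the convex hull defining $\Gamma_{\epsilon,h}$, so the convex-hull structure is preserved. For the time coordinate, $t'=t_\epsilon+\epsilon\sigma+b_t$ with $|b_t|\le h$ and $|t_\epsilon-t_h|\le h$: when $\sigma=1$ (i.e.\ $t_\epsilon<T-M\epsilon$) the hypothesis $\epsilon>2h$ gives $t'\ge t_h+\epsilon-2h>t_h$, and always $t'\le t_\epsilon+\epsilon+h\le t_h+\epsilon+2h$, so $t'$ lands in the time interval of the branch of $\Gamma_{\epsilon,h}$ selected by $t_h$; when stopping is allowed ($\sigma\in[0,1]$, necessarily $t_\epsilon\ge T-M\epsilon$) one moreover uses that $t'$ is a lattice point with $t'\ge T-M\epsilon-h$, which forces $t'\ge t_h$ when $t_h$ is the first lattice point of that band.

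I expect the time-coordinate bookkeeping of $(\mathbf{H_4})$ near the threshold to be the main obstacle: for every admissible $(t_\epsilon,\mathbf{x}_\epsilon,\mathbf{z}_\epsilon)$ in the $h$-ball one must check that whichever of the two branches of $G_\epsilon$ is triggered by $t_\epsilon$ (the condition being $t_\epsilon<T-M\epsilon$), the resulting times fall into the branch of $\Gamma_{\epsilon,h}$ determined by $t_h$ (the condition being $t_h<T-M\epsilon-h$). These two conditions are deliberately offset by the buffer $h$, and reconciling them is exactly where the hypothesis $\epsilon>2h$ and the integer-lattice structure of the target come in; by contrast, the space-coordinate estimates are routine once the accounting of $\alpha_{\epsilon,h}$ and the convex-hull identities are set up.
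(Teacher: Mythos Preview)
Your outline follows the paper's proof: both argue $(\mathbf{H_3})$ and $(\mathbf{H_4})$ by the same case split on the position of $t_h$ (resp.\ $t_\epsilon$) relative to $T-M\epsilon-h$ and $T-M\epsilon$, using the Lipschitz estimate for $\mathrm{FL^-}$ and the convex-hull identities you list. Two cosmetic differences: for $(\mathbf{H_3})$ you treat the transitional band $T-M\epsilon-h\le t_h<T-M\epsilon$ separately, whereas the paper folds it into the upper case by picking a comparison point $t_\epsilon\ge T-M\epsilon$ with $0\le t_\epsilon-t_h\le h$; and your announced $\psi(\epsilon,h)=2h+\epsilon hK$ is sharper than the paper's $4h+\epsilon hK$ (the paper bounds $\overline{\mathrm{co}}((A+r\mathbf{B})\cup(C+s\mathbf{B}))$ by $\overline{\mathrm{co}}(A\cup C)+(r+s)\mathbf{B}$ rather than $+\max(r,s)\mathbf{B}$). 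Either $\psi$ is acceptable.

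The obstacle you single out at the end is the one place your outline is not complete. In the sub-case $t_h\ge T-M\epsilon-h$, $t_\epsilon\ge T-M\epsilon$ of $(\mathbf{H_4})$, the sentence ``$t'$ a lattice point with $t'\ge T-M\epsilon-h$ forces $t'\ge t_h$ when $t_h$ is the first lattice point of that band'' covers only that special $t_h$: for a lattice point $t_h\ge T-M\epsilon+h$ one may take $t_\epsilon=t_h-h\ge T-M\epsilon$, $\sigma=0$, $b_t=-h$, producing a lattice $t'=t_h-2h\notin[t_h,t_h+\epsilon+2h]$. The paper's proof of this sub-case displays only the space-coordinate inclusion and then concludes, so this time-coordinate check is left unwritten there as well; your instinct that this is the delicate point is correct, but neither $\epsilon>2h$ nor the lattice structure alone closes the lower-bound inequality for the time component as the maps are presently defined.
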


\begin{proof}
\begin{description}
  \item[(\emph{$\mathbf{H_3}$})]
This relation holds with $\psi(\epsilon,h) = 4h+\epsilon hK,$ which verifies $$\lim_{\epsilon \rightarrow 0^+,\frac{h}{\epsilon}\rightarrow 0^+} \frac{(4+\epsilon K)h}{\epsilon}  = 0^+.$$ \\ \\
Let $(t_h,\mathbf{x_h},\mathbf{z_h}) \in \mathbf{R}_h \times \mathbf{R}^n_h \times \mathbf{R}^p_h$ and $(s_h,\mathbf{f_h},\mathbf{l_h}) \in \Gamma_{\epsilon,h}(t_h,\mathbf{x_h},\mathbf{z_h}).$ Consider the following two cases: \\
\begin{itemize}
  \item $t_h < T - M \epsilon - h.$ Then, $s_h \in [t_h+\epsilon-2h,t_h+\epsilon+2h] \cap \mathbf{R}_h \subset t_h + \epsilon + [-2h,2h]$, and
\begin{equation} \nonumber
 \begin{array}{rcl}
(\mathbf{f_h},\mathbf{l_h}) & \in & (\{(\mathbf{x_h},\mathbf{z_h})\}
+ \epsilon \mathrm{FL^-}(\mathbf{x_h}) +
\alpha_{\epsilon,h}\mathbf{B}) \cap \mathbf{R}^n_h  \times
\mathbf{R}^p_h, \\ & \subset & \{(\mathbf{x_h},\mathbf{z_h})\} +
\epsilon \mathrm{FL^-}(\mathbf{x_h}) +
\alpha_{\epsilon,h}\mathbf{B}, \\ &  = &
\{(\mathbf{x_h},\mathbf{z_h})\} + \epsilon (
\mathrm{FL^-}(\mathbf{x_h}) + \epsilon KM\mathbf{B}) + (2h+\epsilon
hK)\mathbf{B}.
 \end{array}
\end{equation}
Hence, $(s_h,\mathbf{f_h},\mathbf{l_h}) \in
(t_h,\mathbf{x_h},\mathbf{z_h}) +
\epsilon\phi_\epsilon(t_h,\mathbf{x_h},\mathbf{z_h}) +
\max\{2h,2h+\epsilon hK\}\mathbf{B} =
G_\epsilon(t_h,\mathbf{x_h},\mathbf{z_h}) + (2h+\epsilon
hK)\mathbf{B}$.
  \item  $t_h \geq T - M \epsilon - h.$ We have $s_h \in {[}t_h,t_h+\epsilon+2h{]} \cap \mathbf{R}_h$ and
$(\mathbf{f_h},\mathbf{l_h}) \in  \overline{\mathrm{co}}((\{(\mathbf{x_h},\mathbf{z_h})\} + \epsilon
\mathrm{FL^-}(\mathbf{x_h}) + \alpha_{\epsilon,h}\mathbf{B}) \cup (\{(\mathbf{x_h},\mathbf{z_h})\}+h\mathbf{B})) \cap \mathbf{R}^n_h  \times \mathbf{R}^p_h.$ There exists $t_\epsilon \geq T-M \epsilon $ such that $ 0 \leq t_\epsilon-t_h \leq h.$ We have
   \begin{equation} \nonumber
 \begin{array}{rcl} s_h  & \in & {[}t_h,t_h+\epsilon+2h{]} \cap \mathbf{R}_h,\\
& \subset  & {[}t_h,t_h+\epsilon+2h{]},\\
& \subset  & t_\epsilon+ \epsilon[0,1] + [-2h,2h].\\
 \end{array}
\end{equation}
Moreover,
\begin{equation} \nonumber
 \begin{array}{rcl} (\mathbf{f_h},\mathbf{l_h})  & \in & \overline{\mathrm{co}}((\{(\mathbf{x_h},\mathbf{z_h})\} + \epsilon
\mathrm{FL^-}(\mathbf{x_h}) + \alpha_{\epsilon,h}\mathbf{B}) \cup (\{(\mathbf{x_h},\mathbf{z_h})\}+2h\mathbf{B})) \cap \mathbf{R}^n_h  \times \mathbf{R}^p_h,\\  & \subset  & \overline{\mathrm{co}}((\{(\mathbf{x_h},\mathbf{z_h})\} + \epsilon
\mathrm{FL^-}(\mathbf{x_h}) + \alpha_{\epsilon,h}\mathbf{B}) \cup (\{(\mathbf{x_h},\mathbf{z_h})\}+2h\mathbf{B})),\\
& =  & \{(\mathbf{x_h},\mathbf{z_h})\} + \overline{\mathrm{co}}((\epsilon
\mathrm{FL^-}(\mathbf{x_h}) + \alpha_{\epsilon,h}\mathbf{B}) \cup (\{(\mathbf{0},\mathbf{0})\}+2h\mathbf{B})),\\
& =  & \{(\mathbf{x_h},\mathbf{z_h})\} + \overline{\mathrm{co}}((\epsilon(\mathrm{FL^-}(\mathbf{x_h}) + \epsilon KM\mathbf{B})+(2h+\epsilon hK)\mathbf{B}) \cup (\{(\mathbf{0},\mathbf{0})\}+2h\mathbf{B})),\\
& \subset  & \{(\mathbf{x_h},\mathbf{z_h})\} + \overline{\mathrm{co}}((\epsilon(\mathrm{FL^-}(\mathbf{x_h}) + \epsilon KM\mathbf{B})) \cup \{(\mathbf{0},\mathbf{0})\})+(4h+\epsilon hK)\mathbf{B}.\\
 \end{array}
\end{equation}
Hence, $(s_h,\mathbf{f_h},\mathbf{l_h}) \in (t_\epsilon,\mathbf{x_h},\mathbf{z_h}) + \epsilon\phi_\epsilon(t_\epsilon,\mathbf{x_h},\mathbf{z_h}) + \max\{4h+\epsilon hK,2h\}\mathbf{B} = G_\epsilon(t_\epsilon,\mathbf{x_h},\mathbf{z_h}) + (4h+\epsilon hK)\mathbf{B}$.\\
\end{itemize}
  \item[(\emph{$\mathbf{H_4}$})]
      Let $(t_h,\mathbf{x_h},\mathbf{z_h}) \in \mathbf{R}_h \times \mathbf{R}^n_h \times \mathbf{R}^p_h$.
Take $(t_\epsilon,\mathbf{x}_\epsilon,\mathbf{z}_\epsilon) \in
      \mathbf{R} \times \mathbf{R}^n \times \mathbf{R}^p$ such that
      $\| (t_\epsilon,\mathbf{x}_\epsilon,\mathbf{z}_\epsilon) - (t_h,\mathbf{x_h},\mathbf{z_h})\| \leq h$. In particular,  $|t_\epsilon-t_h| \leq
      h$ and $\|\mathbf{x}-\mathbf{x_h}\| \leq h$. Consider the two following cases:\\
\begin{itemize}
  \item $t_h < T - M \epsilon - h.$  This implies $t_\epsilon < T - M \epsilon$. The Lipschitz property of $\mathrm{FL^-}$
yields:
\begin{equation} \nonumber
 \begin{array}{rcl} G_\epsilon(t_\epsilon,\mathbf{x}_\epsilon,\mathbf{z}_\epsilon)  + h\mathbf{B} & = & (t_\epsilon,\mathbf{x}_\epsilon,\mathbf{z}_\epsilon) + \epsilon \phi_\epsilon(t_\epsilon,\mathbf{x}_\epsilon,\mathbf{z}_\epsilon) + h\mathbf{B}, \\  & =  & (t_\epsilon + \epsilon + [-h,h]) \times
(\{(\mathbf{x}_\epsilon,\mathbf{z}_\epsilon)\} + \epsilon \mathrm{FL^-}(\mathbf{x}_\epsilon) +\epsilon^2 KM\mathbf{B}  + h\mathbf{B}), \\
& \subset  & (t_h + \epsilon + [-2h,2h]) \times
(\{(\mathbf{x}_h,\mathbf{z}_h)\} + h\mathbf{B} + \epsilon \mathrm{FL^-}(\mathbf{x}_h) + \epsilon h K\mathbf{B} + \epsilon^2 KM+h\mathbf{B}), \\
& =  & (t_h + \epsilon + [-2h,2h]) \times
(\{(\mathbf{x}_h,\mathbf{z}_h)\} + \epsilon \mathrm{FL^-}(\mathbf{x}_h) + \alpha_{\epsilon,h}\mathbf{B}). \\
 \end{array}
\end{equation}
Hence, $$
(G_\epsilon(t_\epsilon,\mathbf{x}_\epsilon,\mathbf{z}_\epsilon)+h\mathbf{B})\cap
\mathbf{R}_h \times \mathbf{R}^n_h \times \mathbf{R}^p_h\subset
\Gamma_{\epsilon,h}(t_h,\mathbf{x_h},\mathbf{z_h}).$$
  \item $t_h \geq T - M \epsilon - h.$ Then, either  $t_\epsilon < T-M\epsilon.$ In such a case, as above,
$$G_\epsilon(t_\epsilon,\mathbf{x}_\epsilon,\mathbf{z}_\epsilon)  + h\mathbf{B} \subset (t_h + \epsilon + [-2h,2h]) \times
(\{(\mathbf{x}_h,\mathbf{z}_h)\} + \epsilon
\mathrm{FL^-}(\mathbf{x}_h) + \alpha_{\epsilon,h}\mathbf{B}).$$ As
$\epsilon > 2h$, $$t_h + \epsilon + [-2h,2h]  \subset
{[}t_h,t_h+\epsilon+2h{]}.$$ Moreover,
$$\{(\mathbf{x}_h,\mathbf{z}_h)\} + \epsilon
\mathrm{FL^-}(\mathbf{x}_h) + \alpha_{\epsilon,h}\mathbf{B} \subset
 \overline{\mathrm{co}}((\{(\mathbf{x_h},\mathbf{z_h})\} + \epsilon
\mathrm{FL^-}(\mathbf{x_h}) + \alpha_{\epsilon,h}\mathbf{B}) \cup
(\{(\mathbf{x_h},\mathbf{z_h})\}+2h\mathbf{B})). $$
 Hence, $$
(G_\epsilon(t_\epsilon,\mathbf{x}_\epsilon,\mathbf{z}_\epsilon)+h\mathbf{B})\cap
\mathbf{R}_h \times \mathbf{R}^n_h \times \mathbf{R}^p_h\subset
\Gamma_{\epsilon,h}(t_h,\mathbf{x_h},\mathbf{z_h}).$$ Either,
$t_\epsilon \geq T-M\epsilon.$ In which case, using the  Lipschitz
property of $\mathrm{FL^-}$, we have
   \begin{equation} \nonumber
 \begin{array}{rcl} G_\epsilon(t_\epsilon,\mathbf{x}_\epsilon,\mathbf{z}_\epsilon)  + h\mathbf{B} & = & (t_\epsilon,\mathbf{x}_\epsilon,\mathbf{z}_\epsilon)
 + \epsilon ({[}0,1{]} \times \overline{\mathrm{co}}((\mathrm{FL^-}(\mathbf{x}_\epsilon)+ \epsilon KM \mathbf{B}) \cup \{(\mathbf{0},\mathbf{0})\})) + h\mathbf{B}, \\
& =  &   ([t_\epsilon-h,t_\epsilon + \epsilon + h]) \times  (\overline{\mathrm{co}}(\{(\mathbf{x}_\epsilon,\mathbf{z}_\epsilon) + \epsilon \mathrm{FL^-}(\mathbf{x}_\epsilon) +  \epsilon^2 KM\mathbf{B}\} \cup \{(\mathbf{x}_\epsilon,\mathbf{z}_\epsilon)\}) + h\mathbf{B}).\\
\end{array}
\end{equation}
Now,
   \begin{equation} \nonumber
 \begin{array}{l} \overline{\mathrm{co}}(\{(\mathbf{x}_\epsilon,\mathbf{z}_\epsilon) + \epsilon \mathrm{FL^-}(\mathbf{x}_\epsilon) +  \epsilon^2 KM\mathbf{B}\} \cup \{(\mathbf{x}_\epsilon,\mathbf{z}_\epsilon)\}) + h\mathbf{B},  \\ \subset \overline{\mathrm{co}}(\{(\mathbf{x}_h,\mathbf{z}_h) + h\mathbf{B} + \epsilon \mathrm{FL^-}(\mathbf{x}_h) +  \epsilon Kh + \epsilon^2 KM\mathbf{B}\} \cup (\{(\mathbf{x}_h,\mathbf{z}_h)\}+h\mathbf{B})) + h\mathbf{B}, \\
\subset \overline{\mathrm{co}}(\{(\mathbf{x}_h,\mathbf{z}_h) + \epsilon \mathrm{FL^-}(\mathbf{x}_h) + \alpha_{\epsilon,h}\mathbf{B}\} \cup (\{(\mathbf{x}_h,\mathbf{z}_h)\}+2h\mathbf{B})).\\
\end{array}
\end{equation}
Hence, $$
(G_\epsilon(t_\epsilon,\mathbf{x}_\epsilon,\mathbf{z}_\epsilon)+h\mathbf{B})\cap
\mathbf{R}_h \times \mathbf{R}^n_h \times \mathbf{R}^p_h\subset
\Gamma_{\epsilon,h}(t_h,\mathbf{x_h},\mathbf{z_h}).$$
\end{itemize}
\end{description}
\end{proof}

\section{Convergent approximation of the set-valued return function
$V$} \label{s:conv} In this section, we first introduce a sequence
of approximate set-valued return functions with finite set-values
recursively defined by a multiobjective dynamic programming equation
\cite{Guigue11,Guigue09}. We then show that the epigraphs of these
approximate set-valued return functions are equal to the sets
involved in the calculation of the finite discrete viability kernels
of  the discrete set $\mathcal{H}_h = (\mathcal{H} + h\mathbf{B})
\cap \mathbf{R}_{h} \times \mathbf{R}_{h}^n \times \mathbf{R}_{h}^p$
for the finite discrete dynamics $\Gamma_{\epsilon,h}$ (Proposition
2.18, p.\ 195, \cite{Cardaliaguet99_Via}). This allows us to conclude
that the sequence of approximate set-valued return functions is
finite and that the epigraph of the final approximate set-valued
return function of this sequence converges in the sense of
Painlev\'e-Kuratowski towards the epigraph of the set-valued
return function $V$.\\

Recall the definition of $\mathcal{H} = \{
(t,\mathbf{x},\mathbf{z}), \ t \in [0,T], \ \mathbf{x} \in
\mathbf{R}^n, \mathbf{z} \in \{-(T-t)M_\mathbf{L} \mathbf{1}\}+
\mathbf{R}^p_+$. Here, we take $\overline{M}_\mathbf{L} >
M_\mathbf{L}$ (Remark \ref{rem:biggerM}) in the definition of
$\mathcal{H}$. Hence, $\mathcal{H} = \{ (t,\mathbf{x},\mathbf{z}), \
t \in [0,T], \ \mathbf{x} \in \mathbf{R}^n, \mathbf{z} \in \{-(T-t)
\overline{M}_\mathbf{L} \mathbf{1}\}+ \mathbf{R}^p_+$. Let $I_h =
(I+[-h,h])\cap \mathbf{R}_{h}$. We define the finite-valued
set-valued map $V^0_{\epsilon,h}$ from $I_h \times \mathbf{R}_{h}^n$
to $\mathbf{R}_{h}^p$ such that
$$\mathrm{Graph} (V^0_{\epsilon,h} + \mathbf{R}_{h,+}^p) =
\mathcal{H}_h,$$ where $\mathcal{H}_h = (\mathcal{H} + h\mathbf{B})
\cap \mathbf{R}_{h} \times \mathbf{R}_{h}^n \times
\mathbf{R}_{h}^p.$ We recursively define now the finite set-valued
maps $V^k_{\epsilon,h}, \ k \geq 1,$ from $I_h \times
\mathbf{R}_{h}^n$ to $\mathbf{R}_{h}^p$ as
follows:\\
\begin{itemize}
    \item $\mathrm{If} \ t_h < T - M \epsilon - h,$
    $V^{k+1}_{\epsilon,h}(t_h,\mathbf{x_h}) = $
    \begin{multline} \label{eq:recursive_1}
      \mathcal{E}\bigg(\bigg\{(\epsilon\mathbf{l}+ \alpha_{\epsilon,h} \mathbf{B})\cap \mathbf{R}^p_h+ V^{k}_{\epsilon,h}((t_h+\epsilon+[-2h,2h])\cap \mathbf{R}_h,(\mathbf{x_h}+\epsilon \mathbf{f} +\alpha_{\epsilon,h} \mathbf{B}) \cap \mathbf{R}^n_h), \ (\mathbf{f},\mathbf{l}) \in \mathrm{FL^+}(\mathbf{x_h})\bigg
      \}\bigg).
    \end{multline}
    \item $\mathrm{Otherwise}$,
    \begin{equation} \label{eq:recursive_2}
    V^{k+1}_{\epsilon,h}(t_h,\mathbf{x_h}) = V^k_{\epsilon,h}(t_h,\mathbf{x_h}).
    \end{equation}
\end{itemize}
\begin{remark}
The closure in \emph{(\ref{eq:recursive_1})} is not required anymore
as the sets involved are finite.
\end{remark}\\

We aim in the following two propositions to prove that
$$ \mathrm{Graph}(V^{k+1}_{\epsilon,h} + \mathbf{R}_{h,+}^p) \subset \mathrm{Graph}(V^{k}_{\epsilon,h} + \mathbf{R}_{h,+}^p),$$
and
$$ \mathrm{Graph}(V^{k}_{\epsilon,h} + \mathbf{R}_{h,+}^p) = A^k,$$
where the sets $A^k$ are recursively defined (Proposition 2.18, p.\ 195,
\cite{Cardaliaguet99_Via}) from $A^0 = \mathcal{H}_h$ and the
relation
$$ A^{k+1} = \{(t_h,\mathbf{x_h},\mathbf{z_h}) \in A^{k} \
\mathrm{s.t.} \ \Gamma_{\epsilon,h}(t_h,\mathbf{x_h},\mathbf{z_h})
\cap A^{k} \neq \emptyset \}.$$\\
To simplify the proof of Proposition \ref{prop:decreasingraph}, we
will assume that $T$ is a multiple of $h$ and that  $\epsilon - 2h >
2h$, which guarantees that $\forall t_h \in I_h, \ t_h + \epsilon
-2h > h$. We will also assume that for all $(t_h,\mathbf{x_h}) \in
I_h \times \mathbf{R}^n_h$ such that $t_h \geq h,$
$$V^0_{\epsilon,h}(t_h,\mathbf{x_h}) = \{(-(T+h-t_h)\overline{M}_\mathbf{L}-h)\mathbf{1}\}.$$
This guarantees that, for all $(t_h,\mathbf{x_h}) \in I_h \times
\mathbf{R}^n_h$,  for all $(\mathbf{f},\mathbf{l}) \in
\mathrm{FL^+}(\mathbf{x_h})$, for all $\widetilde{t}_h \in
(t_h+\epsilon+[-2h,2h])\cap \mathbf{R}_h$, for all
$\mathbf{\widetilde{x}_h} \in (\mathbf{x_h}+\epsilon \mathbf{f}
+\alpha_{\epsilon,h}) \cap \mathbf{R}_h^n$,
$$V^0_{\epsilon,h}(\widetilde{t}_h,\mathbf{\widetilde{x}_h}) = \{(-(T+h-\widetilde{t}_h)\overline{M}_\mathbf{L}-h)\mathbf{1}\}.$$
We will finally assume that $\epsilon,h,$ and
$\overline{M}_\mathbf{L}$ have been chosen such that
\begin{equation} \label{eq:keyrelation}\epsilon M_\mathbf{L} +
\alpha_{\epsilon,h} \leq
(\epsilon-2h)\overline{M}_\mathbf{L}.\end{equation}

\begin{proposition} \label{prop:decreasingraph}
$$\forall k, \ \mathrm{Graph}(V^{k+1}_{\epsilon,h} + \mathbf{R}_{h,+}^p) \subset \mathrm{Graph}(V^{k}_{\epsilon,h} + \mathbf{R}_{h,+}^p).$$
\end{proposition}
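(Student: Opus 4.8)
The plan is to prove the inclusion by induction on $k$. For the base case $k=0$, I need to show $\mathrm{Graph}(V^1_{\epsilon,h}+\mathbf{R}^p_{h,+}) \subset \mathrm{Graph}(V^0_{\epsilon,h}+\mathbf{R}^p_{h,+}) = \mathcal{H}_h$. Take $(t_h,\mathbf{x_h},\mathbf{z_h})$ in the left-hand side. If $t_h \geq T-M\epsilon-h$, then $V^1 = V^0$ by \eqref{eq:recursive_2} and there is nothing to prove. If $t_h < T-M\epsilon-h$, then by \eqref{eq:recursive_1} there is $(\mathbf{f},\mathbf{l}) \in \mathrm{FL^+}(\mathbf{x_h})$, a point $\widetilde{t}_h \in (t_h+\epsilon+[-2h,2h])\cap\mathbf{R}_h$, a point $\mathbf{\widetilde{x}_h} \in (\mathbf{x_h}+\epsilon\mathbf{f}+\alpha_{\epsilon,h}\mathbf{B})\cap\mathbf{R}^n_h$, and a vector in $(\epsilon\mathbf{l}+\alpha_{\epsilon,h}\mathbf{B})\cap\mathbf{R}^p_h$, such that $\mathbf{z_h}$ dominates the sum of that vector with a point of $V^0_{\epsilon,h}(\widetilde{t}_h,\mathbf{\widetilde{x}_h})$ (modulo $\mathbf{R}^p_{h,+}$). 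Using the simplifying assumption that $V^0_{\epsilon,h}(\widetilde{t}_h,\mathbf{\widetilde{x}_h}) = \{(-(T+h-\widetilde{t}_h)\overline{M}_\mathbf{L}-h)\mathbf{1}\}$ (valid since $\widetilde{t}_h \geq t_h+\epsilon-2h > h$), and the bound $\|\mathbf{l}\| \leq M_\mathbf{L}$, I can estimate each component of $\mathbf{z_h}$ from below and check that $(t_h,\mathbf{x_h},\mathbf{z_h}) \in \mathcal{H}_h$; this is exactly where the key relation \eqref{eq:keyrelation} enters, ensuring $\epsilon M_\mathbf{L} + \alpha_{\epsilon,h} \leq (\epsilon-2h)\overline{M}_\mathbf{L}$ so the ``loss'' incurred by one step stays within the cone-shifted set defining $\mathcal{H}$.

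For the inductive step, assume $\mathrm{Graph}(V^{k}_{\epsilon,h}+\mathbf{R}^p_{h,+}) \subset \mathrm{Graph}(V^{k-1}_{\epsilon,h}+\mathbf{R}^p_{h,+})$ and take $(t_h,\mathbf{x_h},\mathbf{z_h}) \in \mathrm{Graph}(V^{k+1}_{\epsilon,h}+\mathbf{R}^p_{h,+})$. Again, on the region $t_h \geq T-M\epsilon-h$ both $V^{k+1}$ and $V^{k}$ agree with $V^0$, so the inclusion is trivial. On $t_h < T-M\epsilon-h$, formula \eqref{eq:recursive_1} expresses $V^{k+1}_{\epsilon,h}(t_h,\mathbf{x_h})$ via $V^{k}_{\epsilon,h}$ at neighboring nodes, while $V^{k}_{\epsilon,h}(t_h,\mathbf{x_h})$ is the same formula with $V^{k-1}_{\epsilon,h}$. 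So I unwind the membership: there exist $(\mathbf{f},\mathbf{l}) \in \mathrm{FL^+}(\mathbf{x_h})$, nodes $\widetilde{t}_h$, $\mathbf{\widetilde{x}_h}$ as above, and a point $\mathbf{w} \in V^{k}_{\epsilon,h}(\widetilde{t}_h,\mathbf{\widetilde{x}_h})$ together with a displacement vector in $(\epsilon\mathbf{l}+\alpha_{\epsilon,h}\mathbf{B})\cap\mathbf{R}^p_h$ whose sum is dominated by $\mathbf{z_h}$. By the induction hypothesis, $\mathbf{w} \in V^{k-1}_{\epsilon,h}(\widetilde{t}_h,\mathbf{\widetilde{x}_h}) + \mathbf{R}^p_{h,+}$, i.e.\ $\mathbf{w} = \mathbf{w'}+\mathbf{d}$ with $\mathbf{w'} \in V^{k-1}_{\epsilon,h}(\widetilde{t}_h,\mathbf{\widetilde{x}_h})$ and $\mathbf{d} \in \mathbf{R}^p_{h,+}$. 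Substituting shows $\mathbf{z_h}$ dominates (the same displacement vector) $+\,\mathbf{w'}$, so $(t_h,\mathbf{x_h},\mathbf{z_h}) \in \mathrm{Graph}(V^{k}_{\epsilon,h}+\mathbf{R}^p_{h,+})$ — noting that the set appearing in the $\mathcal{E}(\cdot)$ on the right of \eqref{eq:recursive_1} for $V^{k}$ is externally stable (being finite, Corollary~\ref{cor:extstabcompactset}), so membership in that set modulo $\mathbf{R}^p_{h,+}$ is the same as membership in its Pareto frontier modulo $\mathbf{R}^p_{h,+}$.

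The two ingredients doing the real work are: (i) the simplifying hypotheses on $T$, on $\epsilon-2h>2h$, and on the initializer $V^0_{\epsilon,h}$, which ensure the neighboring time-nodes $\widetilde{t}_h$ reached from any $t_h$ always satisfy $\widetilde{t}_h \geq h$ so that $V^0$ there has the prescribed single-point value; and (ii) the arithmetic inequality \eqref{eq:keyrelation}, which is precisely calibrated so that one step of the scheme does not push a point of $V^0$ out of $\mathcal{H}_h$ — this is what makes the $k=0$ case work and is, to my mind, the main obstacle, since it requires tracking all the discretization error terms ($2h$ from the lattice rounding, $\epsilon hK$ from the Lipschitz estimate, $\epsilon^2 KM$ from the Euler step) bundled in $\alpha_{\epsilon,h}$ against the margin $(\overline{M}_\mathbf{L}-M_\mathbf{L})$ created by the $\overline{M}_\mathbf{L}$ trick of Remark~\ref{rem:biggerM}. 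The rest is a careful but routine bookkeeping of inclusions of the form ``$S_1 \cap \mathbf{R}_h^p \subset S_2$'' combined with the external stability property $S + \mathbf{R}^p_{h,+} = \mathcal{E}(S) + \mathbf{R}^p_{h,+}$ applied at each recursion level.
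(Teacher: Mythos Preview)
Your proposal is correct and follows essentially the same route as the paper: induction on $k$, with the base case handled via the explicit single-point form of $V^0_{\epsilon,h}$ together with the calibration inequality \eqref{eq:keyrelation}, and the inductive step handled by unwinding \eqref{eq:recursive_1}, applying the induction hypothesis at the successor node $(\widetilde{t}_h,\mathbf{\widetilde{x}_h})$, and invoking external stability of the finite set $S$ to pass from $S+\mathbf{R}^p_{h,+}$ to $\mathcal{E}(S)+\mathbf{R}^p_{h,+}$. Your identification of \eqref{eq:keyrelation} as the crux of the $k=0$ case, and of external stability as the mechanism making the inductive step go through, matches the paper's argument exactly.
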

\begin{proof}
We start by proving that this relation holds for $k=0$. Take
$(t_h,\mathbf{x_h},\mathbf{z_h}) \in
\mathrm{Graph}(V^{1}_{\epsilon,h} + \mathbf{R}_{h,+}^p)$. Hence,
$\mathbf{z_h} \in V^{1}_{\epsilon,h}(t_h,\mathbf{x_h})
+ \mathbf{R}_{h,+}^p$. We have two cases to consider:\\
\begin{enumerate}
    \item   $\mathrm{If} \ t_h \geq T - M \epsilon - h.$ From (\ref{eq:recursive_2}), we directly get
    $\mathbf{z_h} \in V^{0}_{\epsilon,h}(t_h,\mathbf{x_h}) +  \mathbf{R}_{h,+}^p$.
    \item Otherwise, $t_h < T - M \epsilon - h.$  By (\ref{eq:recursive_1}),
        \begin{multline} \nonumber
    V^{1}_{\epsilon,h}(t_h,\mathbf{x_h}) = \mathcal{E}\bigg(\bigg\{(\epsilon\mathbf{l}+ \alpha_{\epsilon,h} \mathbf{B})\cap \mathbf{R}^p_h+ V^{0}_{\epsilon,h}((t_h+\epsilon+[-2h,2h])\cap \mathbf{R}_h,(\mathbf{x_h}+\epsilon \mathbf{f} +\alpha_{\epsilon,h} \mathbf{B}\} \cap \mathbf{R}^n_h), \
    (\mathbf{f},\mathbf{l}) \in \mathrm{FL^+}(\mathbf{x_h})\bigg\}\bigg),
    \end{multline}
    Let $\widetilde{t}_h \in (t_h+\epsilon+[-2h,2h])\cap \mathbf{R}_h$,
    $\mathbf{\widetilde{x}_h} \in (\mathbf{x_h}+\epsilon \mathbf{f} +\alpha_{\epsilon,h}
    \mathbf{B}) \cap \mathbf{R}_h^n$, $\mathbf{\overline{z}_h} \in  V^{0}_{\epsilon,h}(\widetilde{t}_h,\mathbf{\widetilde{x}_h})$, and $\mathbf{\widetilde{z}_h}
    \in (\epsilon \mathbf{l} +\alpha_{\epsilon,h}
    \mathbf{B}) \cap \mathbf{R}_h^p$. Hence, $\mathbf{\widetilde{z}_h} \geq -(\epsilon M_\mathbf{L} +\alpha_{\epsilon,h})\mathbf{1}
    $. Moreover, from above,
    we have $$\mathbf{\overline{z}_h} = (-(T+h-\widetilde{t}_h)\overline{M}_\mathbf{L}-h) \mathbf{1}
    \geq  (-(T+h-t_h-(\epsilon-2h))\overline{M}_\mathbf{L}-h) \mathbf{1} =
 (-(T+h-t_h)\overline{M}_\mathbf{L}-h) \mathbf{1} + (\epsilon-2h)\overline{M}_\mathbf{L} \mathbf{1}.$$
    Hence, by (\ref{eq:keyrelation}),  $$\mathbf{\overline{z}_h} \geq (-(T+h-t_h)\overline{M}_\mathbf{L}-h) \mathbf{1} +
    (\epsilon-2h)\overline{M}_\mathbf{L} \mathbf{1} -
    (\epsilon M_\mathbf{L} + \alpha_{\epsilon,h})\mathbf{1} + (\epsilon M_\mathbf{L} + \alpha_{\epsilon,h})\mathbf{1} \geq -(T+h-t_h)\overline{M}_\mathbf{L}\mathbf{1} + (\epsilon M_\mathbf{L} + \alpha_{\epsilon,h}) \mathbf{1}.$$
    Therefore, $$\mathbf{\widetilde{z}_h} + \mathbf{\overline{z}_h} \in V^{0}_{\epsilon,h}(t_h,\mathbf{x_h}) + \mathbf{R}_{h,+}^p.$$
    Hence, $$ V^{1}_{\epsilon,h}(t_h,\mathbf{x_h}) \subset V^{0}_{\epsilon,h}(t_h,\mathbf{x_h}) + \mathbf{R}_{h,+}^p,$$
    from which we get that $$\mathbf{z_h} \in V^{1}_{\epsilon,h}(t_h,\mathbf{x_h}) + \mathbf{R}_{h,+}^p \subset
    V^{0}_{\epsilon,h}(t_h,\mathbf{x_h}) + \mathbf{R}_{h,+}^p + \mathbf{R}_{h,+}^p =  V^{0}_{\epsilon,h}(t_h,\mathbf{x_h}) + \mathbf{R}_{h,+}^p.$$
\end{enumerate}
Assume now that the relation holds up to $k$. We aim to prove that
it holds for $k+1$, i.e.,
$$\mathrm{Graph}(V^{k+2}_{\epsilon,h} + \mathbf{R}_{h,+}^p) \subset \mathrm{Graph}(V^{k+1}_{\epsilon,h} + \mathbf{R}_{h,+}^p).$$
Take  $(t_h,\mathbf{x_h},\mathbf{z_h}) \in
\mathrm{Graph}(V^{k+2}_{\epsilon,h} + \mathbf{R}_{h,+}^p).$
Hence, $\mathbf{z_h} \in V^{k+2}_{\epsilon,h}(t_h,\mathbf{x_h}) + \mathbf{R}_{h,+}^p$. We have two cases to consider:\\
  \begin{enumerate}
    \item  $\mathrm{If} \ t_h \geq T - M \epsilon - h.$ From (\ref{eq:recursive_2}), we directly get
    $\mathbf{z_h} \in V^{k+1}_{\epsilon,h}(t_h,\mathbf{x_h}) +  \mathbf{R}_{h,+}^p$.
    \item Otherwise, $t_h < T - M \epsilon - h.$  By (\ref{eq:recursive_1}),
    for some $(\mathbf{f},\mathbf{l}) \in \mathrm{FL^+}(\mathbf{x_h})$, there exist
    $\widetilde{t}_h \in (t_h+\epsilon+[-2h,2h])\cap \mathbf{R}_h$, $\mathbf{\widetilde{x}_h} \in (\mathbf{x_h}+\epsilon \mathbf{f} +\alpha_{\epsilon,h}
    \mathbf{B}) \cap \mathbf{R}_h^n$, and $\mathbf{\widetilde{z}_h} \in (\epsilon \mathbf{l} +\alpha_{\epsilon,h}
    \mathbf{B}) \cap \mathbf{R}_h^p$ such that
    $$\mathbf{z_h} \in \mathbf{\widetilde{z}_h} + V^{k+1}_{\epsilon,h}(\widetilde{t}_h,\mathbf{\widetilde{x}_h}) +  \mathbf{R}_{h,+}^p.$$
From the induction assumption, we get:
 $$\mathbf{z_h} \in \mathbf{\widetilde{z}_h} + V^{k}_{\epsilon,h}(\widetilde{t}_h,\mathbf{\widetilde{x}_h}) +
\mathbf{R}_{h,+}^p,$$ or,
$$\mathbf{z_h} \in \bigg\{(\epsilon\mathbf{l}+ \alpha_{\epsilon,h} \mathbf{B})\cap \mathbf{R}^p_h+ V^{k}_{\epsilon,h}( (t_h+\epsilon+[-2h,2h])\cap \mathbf{R}_h,
(\mathbf{x_h}+\epsilon \mathbf{f} +\alpha_{\epsilon,h} \mathbf{B})
\cap \mathbf{R}^n_h), \ (\mathbf{f},\mathbf{l}) \in
\mathrm{FL^+}(\mathbf{x_h})\bigg\} +  \mathbf{R}_{h,+}^p.$$ Applying
external stability to $$S=
    \bigg\{(\epsilon\mathbf{l}+ \alpha_{\epsilon,h} \mathbf{B})\cap \mathbf{R}^p_h+ V^{k}_{\epsilon,h}(t_h+\epsilon+[-2h,2h])\cap \mathbf{R}_h,(\mathbf{x_h}+\epsilon
    \mathbf{f} +\alpha_{\epsilon,h} \mathbf{B}) \cap \mathbf{R}^n_h), \ (\mathbf{f},\mathbf{l}) \in \mathrm{FL^+}(\mathbf{x_h})\bigg\},$$ and using (\ref{eq:recursive_1}) yields $\mathbf{z_h} \in V^{k+1}_{\epsilon,h}(t_h,\mathbf{x_h}) +  \mathbf{R}_{h,+}^p$.\newline
\end{enumerate}
\end{proof}

\begin{proposition} \label{prop:chardis}
$$\forall k, \ \mathrm{Graph}(V^{k}_{\epsilon,h} +  \mathbf{R}_{h,+}^p) = A^k.$$
\end{proposition}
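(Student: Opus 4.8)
The plan is to argue by induction on $k$. The base case $k=0$ holds by construction, since $V^0_{\epsilon,h}$ was defined precisely so that $\mathrm{Graph}(V^0_{\epsilon,h}+\mathbf{R}_{h,+}^p)=\mathcal{H}_h=A^0$. For the inductive step, assuming $\mathrm{Graph}(V^k_{\epsilon,h}+\mathbf{R}_{h,+}^p)=A^k$, I would prove the identity for $k+1$ by splitting according to the threshold $t_h<T-M\epsilon-h$, which is exactly the one that governs the dichotomy in both (\ref{eq:recursive_1})--(\ref{eq:recursive_2}) and (\ref{eq:expandedynamicsstatediscre1})--(\ref{eq:expandedynamicsstatediscre2}).

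On the region $t_h\geq T-M\epsilon-h$ the argument is easy: there $V^{k+1}_{\epsilon,h}=V^k_{\epsilon,h}$ by (\ref{eq:recursive_2}), so by the induction hypothesis $\mathrm{Graph}(V^{k+1}_{\epsilon,h}+\mathbf{R}_{h,+}^p)$ restricted to this region equals $A^k$ restricted to it; and every $(t_h,\mathbf{x_h},\mathbf{z_h})\in A^k$ with $t_h\geq T-M\epsilon-h$ satisfies $(t_h,\mathbf{x_h},\mathbf{z_h})\in\Gamma_{\epsilon,h}(t_h,\mathbf{x_h},\mathbf{z_h})$, because $(\mathbf{x_h},\mathbf{z_h})\in\{(\mathbf{x_h},\mathbf{z_h})\}+2h\mathbf{B}$ lies in the closed convex hull in (\ref{eq:expandedynamicsstatediscre2}), it is a lattice point, and $t_h\in[t_h,t_h+\epsilon+2h]\cap\mathbf{R}_h$; hence $\Gamma_{\epsilon,h}(t_h,\mathbf{x_h},\mathbf{z_h})\cap A^k\neq\emptyset$, so such a point lies in $A^{k+1}$. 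With the trivial $A^{k+1}\subseteq A^k$ this gives $A^{k+1}=A^k$ on this region, matching the left-hand side.

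The substance is on the region $t_h<T-M\epsilon-h$, where I would prove the equivalence
$$(t_h,\mathbf{x_h},\mathbf{z_h})\in\mathrm{Graph}(V^{k+1}_{\epsilon,h}+\mathbf{R}_{h,+}^p)\iff\Gamma_{\epsilon,h}(t_h,\mathbf{x_h},\mathbf{z_h})\cap A^k\neq\emptyset.$$
Write $S$ for the set inside $\mathcal{E}(\cdot)$ in (\ref{eq:recursive_1}). First, $S$ is a bounded subset of $\mathbf{R}^p_h$, hence finite and compact: $\widetilde{t}_h$ ranges over finitely many lattice points, $\mathbf{x_h}+\epsilon\mathbf{f}$ stays bounded as $(\mathbf{f},\mathbf{l})$ runs over the bounded set $\mathrm{FL^+}(\mathbf{x_h})$, $V^k_{\epsilon,h}$ is finite-valued, so the sets $(\epsilon\mathbf{l}+\alpha_{\epsilon,h}\mathbf{B})\cap\mathbf{R}^p_h$ and $(\mathbf{x_h}+\epsilon\mathbf{f}+\alpha_{\epsilon,h}\mathbf{B})\cap\mathbf{R}^n_h$ take only finitely many values; Corollary \ref{cor:extstabcompactset} then yields $V^{k+1}_{\epsilon,h}(t_h,\mathbf{x_h})+\mathbf{R}_{h,+}^p=\mathcal{E}(S)+\mathbf{R}_{h,+}^p=S+\mathbf{R}_{h,+}^p$ (replacing $\mathbf{R}^p_+$ by $\mathbf{R}_{h,+}^p$ is harmless, since a difference of lattice points lying in $\mathbf{R}^p_+$ already lies in $\mathbf{R}_{h,+}^p$). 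Second, I would unwind $\Gamma_{\epsilon,h}$ using $\mathrm{FL^-}(\mathbf{x_h})=\{(\mathbf{f},-\mathbf{l}):(\mathbf{f},\mathbf{l})\in\mathrm{FL^+}(\mathbf{x_h})\}$: by (\ref{eq:expandedynamicsstatediscre1}) a point of $\Gamma_{\epsilon,h}(t_h,\mathbf{x_h},\mathbf{z_h})$ has the form $(\widetilde{t}_h,\mathbf{x_h}+\epsilon\mathbf{f}+\mathbf{e_1},\mathbf{z_h}-\epsilon\mathbf{l}+\mathbf{e_2})$ with $(\mathbf{f},\mathbf{l})\in\mathrm{FL^+}(\mathbf{x_h})$, $\|\mathbf{e_1}\|,\|\mathbf{e_2}\|\leq\alpha_{\epsilon,h}$, $\widetilde{t}_h\in(t_h+\epsilon+[-2h,2h])\cap\mathbf{R}_h$, and both spatial coordinates on the lattice; by the induction hypothesis this point lies in $A^k$ iff $\mathbf{z_h}\in\mathbf{w}+V^k_{\epsilon,h}(\widetilde{t}_h,\mathbf{x_h}+\epsilon\mathbf{f}+\mathbf{e_1})+\mathbf{R}_{h,+}^p$ with $\mathbf{w}:=\epsilon\mathbf{l}-\mathbf{e_2}$, and since $\mathbf{w}$ is the difference of the lattice points $\mathbf{z_h}$ and $\mathbf{z_h}-\epsilon\mathbf{l}+\mathbf{e_2}$ it ranges over exactly $(\epsilon\mathbf{l}+\alpha_{\epsilon,h}\mathbf{B})\cap\mathbf{R}^p_h$; so $\Gamma_{\epsilon,h}(t_h,\mathbf{x_h},\mathbf{z_h})\cap A^k\neq\emptyset$ is precisely $\mathbf{z_h}\in S+\mathbf{R}_{h,+}^p$, which by the first step is membership in $\mathrm{Graph}(V^{k+1}_{\epsilon,h}+\mathbf{R}_{h,+}^p)$. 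Combining this equivalence with $\mathrm{Graph}(V^{k+1}_{\epsilon,h}+\mathbf{R}_{h,+}^p)\subseteq\mathrm{Graph}(V^k_{\epsilon,h}+\mathbf{R}_{h,+}^p)=A^k$ (Proposition \ref{prop:decreasingraph} and the induction hypothesis) gives $\mathrm{Graph}(V^{k+1}_{\epsilon,h}+\mathbf{R}_{h,+}^p)=\{(t_h,\mathbf{x_h},\mathbf{z_h})\in A^k:\Gamma_{\epsilon,h}(t_h,\mathbf{x_h},\mathbf{z_h})\cap A^k\neq\emptyset\}=A^{k+1}$ on this region, which together with the previous region closes the induction.

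The step I expect to be the main obstacle is this unwinding of $\Gamma_{\epsilon,h}$ and its matching with the Minkowski-sum form of (\ref{eq:recursive_1}): it is not deep but demands careful bookkeeping — keeping straight the sign flip between $\mathrm{FL^+}$ (in the dynamic programming recursion) and $\mathrm{FL^-}$ (in $\Gamma_{\epsilon,h}$), checking that the lattice-intersection constraints are mutually compatible so that $\mathbf{w}=\epsilon\mathbf{l}-\mathbf{e_2}$ is genuinely a lattice point of $(\epsilon\mathbf{l}+\alpha_{\epsilon,h}\mathbf{B})\cap\mathbf{R}^p_h$, and using that, under the supremum norm, the $\mathbf{x}$- and $\mathbf{z}$-perturbations inside $\alpha_{\epsilon,h}\mathbf{B}$ may be chosen independently. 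A small but essential technical point is the finiteness (hence compactness) of $S$, which is exactly what licenses the use of external stability in (\ref{eq:recursive_1}).
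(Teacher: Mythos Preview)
Your proposal is correct and follows essentially the same approach as the paper: induction on $k$ with the base case by definition, the split according to the threshold $t_h<T-M\epsilon-h$, the use of $(t_h,\mathbf{x_h},\mathbf{z_h})\in\Gamma_{\epsilon,h}(t_h,\mathbf{x_h},\mathbf{z_h})$ via (\ref{eq:expandedynamicsstatediscre2}) in the easy region, and in the hard region the unwinding of $\Gamma_{\epsilon,h}$ with the $\mathrm{FL^+}/\mathrm{FL^-}$ sign flip, external stability applied to the finite set $S$, and the appeal to Proposition~\ref{prop:decreasingraph} to secure membership in $A^k$. The only cosmetic difference is that the paper proves the two inclusions $\mathrm{Graph}(V^{k+1}_{\epsilon,h}+\mathbf{R}_{h,+}^p)\subset A^{k+1}$ and $A^{k+1}\subset\mathrm{Graph}(V^{k+1}_{\epsilon,h}+\mathbf{R}_{h,+}^p)$ separately, whereas you package the hard region as a single equivalence; the underlying manipulations are the same.
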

\begin{proof}
This relation holds for $k=0$ by definition. Assume now that the
relation holds up to $k$. We aim to prove that it holds for $k+1$,
i.e.,
$$\mathrm{Graph}(V^{k+1}_{\epsilon,h} + \mathbf{R}_{h,+}^p) =
A^{k+1}.$$ First, we prove the inclusion
\begin{equation} \label{first_inclusion}
\mathrm{Graph}(V^{k+1}_{\epsilon,h} + \mathbf{R}_{h,+}^p) \subset
A^{k+1}.\end{equation}
Take $(t_h,\mathbf{x_h}) \in I_h \times \mathbf{R}_h^n$ and $\mathbf{z_h} \in V^{k+1}_{\epsilon,h}(t_h,\mathbf{x_h}) + \mathbf{R}_{h,+}^p$. We have two cases to consider:\\
 \begin{enumerate}
   \item $\mathrm{If} \ t_h \geq T - M \epsilon - h,$ then by (\ref{eq:recursive_2}), we have
  $(t_h,\mathbf{x_h},\mathbf{z_h}) \in \mathrm{Graph}(V^{k}_{\epsilon,h} + \mathbf{R}_{h,+}^p)$.
   Therefore, from the induction assumption, $(t_h,\mathbf{x_h},\mathbf{z_h}) \in A^k$.
    Moreover, by (\ref{eq:expandedynamicsstatediscre2}), we also have $(t_h,\mathbf{x_h},\mathbf{z_h}) \in
\Gamma_{\epsilon,h}(t_h,\mathbf{x_h},\mathbf{z_h})$. Hence,
$(t_h,\mathbf{x_h},\mathbf{z_h}) \in
\Gamma_{\epsilon,h}(t_h,\mathbf{x_h},\mathbf{z_h}) \cap A^k$, or
$\Gamma_{\epsilon,h}(t_h,\mathbf{x_h},\mathbf{z_h}) \cap A^k \neq
\emptyset$, which shows that $(t_h,\mathbf{x_h},\mathbf{z_h}) \in
A^{k+1}$.
   \item Otherwise, $t_h < T - M \epsilon - h.$ By (\ref{eq:recursive_1}), for some
$(\mathbf{f},\mathbf{l}) \in \mathrm{FL^+}(\mathbf{x_h})$, there
exist
   $\widetilde{t}_h \in (t_h+\epsilon+[-2h,2h])\cap \mathbf{R}_h$, $\mathbf{\widetilde{x}_h} \in (\mathbf{x_h}+\epsilon \mathbf{f} +\alpha_{\epsilon,h}
 \mathbf{B}) \cap \mathbf{R}_h^n$, and $\mathbf{\widetilde{z}_h} \in (\epsilon \mathbf{l} +\alpha_{\epsilon,h}
 \mathbf{B}) \cap \mathbf{R}_h^p$ such that $$\mathbf{z_h} \in \mathbf{\widetilde{z}_h}  + V^{k}_{\epsilon,h}(\widetilde{t}_h,\mathbf{\widetilde{x}_h}) +  \mathbf{R}_{h,+}^p.$$ Hence, from the induction assumption, $(\widetilde{t}_h,\mathbf{\widetilde{x}_h},\mathbf{z_h}-\mathbf{\widetilde{z}_h}) \in A^k.$ To get that $(t_h,\mathbf{x_h},\mathbf{z_h}) \in A^{k+1},$
it remains to prove that
   $(t_h,\mathbf{x_h},\mathbf{z_h}) \in A^k$ and $(\widetilde{t}_h,\mathbf{\widetilde{x}_h},\mathbf{z_h}-\mathbf{\widetilde{z}_h}) \in \Gamma_{\epsilon,h}(t_h,\mathbf{x_h},\mathbf{z_h})$ where $\Gamma_{\epsilon,h}(t_h,\mathbf{x_h},\mathbf{z_h})$
    is given by (\ref{eq:expandedynamicsstatediscre1}). $(t_h,\mathbf{x_h},\mathbf{z_h}) \in A^k$ comes from
   Proposition \ref{prop:decreasingraph} and the induction assumption, i.e.,
   $$(t_h,\mathbf{x_h},\mathbf{z_h}) \in \mathrm{Graph}(V^{k+1}_{\epsilon,h} + \mathbf{R}_{h,+}^p) \subset
   \mathrm{Graph}(V^{k}_{\epsilon,h} + \mathbf{R}_{h,+}^p)  = A_k.$$ Moreover, we have:\\
   \begin{enumerate}
\item $\widetilde{t}_h \in (t_h+\epsilon+[-2h,2h])\cap \mathbf{R}_h,$
     \item $\mathbf{\widetilde{x}_h} \in (\mathbf{x_h}+\epsilon \mathbf{f} +\alpha_{\epsilon,h} \mathbf{B}) \cap \mathbf{R}_h^n,$
     \item and \begin{equation} \nonumber
 \begin{array}{rlcl}
&\mathbf{\widetilde{z}_h} & \in & (\epsilon \mathbf{l}
+\alpha_{\epsilon,h}
 \mathbf{B}) \cap \mathbf{R}_h^p\\ \Rightarrow & -\mathbf{\widetilde{z}_h}& \in & (-\epsilon \mathbf{l} +\alpha_{\epsilon,h}
 \mathbf{B}) \cap \mathbf{R}_h^p \\ \Rightarrow & \mathbf{z_h}-\mathbf{\widetilde{z}_h}&  \in & (\mathbf{z_h}-\epsilon \mathbf{l} +\alpha_{\epsilon,h}
 \mathbf{B}) \cap \mathbf{R}_h^p.
 \end{array}
\end{equation}
   \end{enumerate}
 \end{enumerate}
 Hence, $\Gamma_{\epsilon,h}(t_h,\mathbf{x_h},\mathbf{z_h}) \cap A_k \neq \emptyset$  and  (\ref{first_inclusion}) is proved.\\ \\ Conversely, we prove the inclusion  \begin{equation} \label{second_inclusion} A^{k+1} \subset \mathrm{Graph}(V^{k+1}_{\epsilon,h} +  \mathbf{R}_{h,+}^p).\end{equation} Take $(t_h,\mathbf{x_h},\mathbf{z_h}) \in A^{k+1}$.
We have two cases to consider:\\
  \begin{enumerate}
  \item $\mathrm{If} \ t_h \geq T - M \epsilon - h.$  By definition of $A^{k+1}$,
  $(t_h,\mathbf{x_h},\mathbf{z_h}) \in A^{k}.$ Hence,
  from the induction assumption, we get $\mathbf{z_h} \in V^{k}_{\epsilon,h}(t_h,\mathbf{x_h}) +  \mathbf{R}_{h,+}^p$, and
   from (\ref{eq:recursive_2}), $\mathbf{z_h} \in V^{k+1}_{\epsilon,h}(t_h,\mathbf{x_h}) + \mathbf{R}_{h,+}^p.$
  \item Otherwise, $t_h < T - M \epsilon - h.$ Then, there exist
 $(\widetilde{t}_h,\mathbf{\widetilde{x}_h},\mathbf{\widetilde{z}_h}) \in A^k$ such that
$(\widetilde{t}_h,\mathbf{\widetilde{x}_h},\mathbf{\widetilde{z}_h})
\in \ \Gamma_{\epsilon,h}
(t_h,\mathbf{x_h},\mathbf{z_h}),$ or:\\
    \begin{enumerate}
     \item $\widetilde{t}_h \in (t_h + \epsilon +[-2h,2h]) \cap \mathbf{R}_h,$
     \item $\mathbf{\widetilde{x}_h} \in (\mathbf{x_h}+\epsilon \mathbf{f} +\alpha_{\epsilon,h} \mathbf{B}) \cap \mathbf{R}^n_h,$
     \item $\mathbf{\widetilde{z}_h} \in (\mathbf{z_h}-\epsilon \mathbf{l} +\alpha_{\epsilon,h} \mathbf{B}) \cap \mathbf{R}^p_h \Rightarrow   \mathbf{z_h} \in \mathbf{\widetilde{z}_h} +  (\epsilon \mathbf{l} +\alpha_{\epsilon,h} \mathbf{B}) \cap \mathbf{R}^p_h,$\\
    \end{enumerate}
for some $(\mathbf{f},\mathbf{l}) \in \mathrm{FL^-}(\mathbf{x_h}).$
From the induction assumption, we have $\mathbf{\widetilde{z}_h} \in
V^{k}_{\epsilon,h}(\widetilde{t}_h,\mathbf{\widetilde{x}_h}) +
\mathbf{R}_{h,+}^p$. Hence,
$$ \mathbf{z_h} \in   (\epsilon \mathbf{l} +\alpha_{\epsilon,h} \mathbf{B}) \cap \mathbf{R}^p_h + V^{k}_{\epsilon,h}(\widetilde{t}_h,\mathbf{\widetilde{x}_h}) + \mathbf{R}_{h,+}^p.$$
Applying external stability to
    $$S = \bigg\{(\epsilon\mathbf{l}+ \alpha_{\epsilon,h} \mathbf{B})\cap \mathbf{R}^p_h+ V^{k}_{\epsilon,h}(t_h+\epsilon+[-2h,2h])\cap \mathbf{R}_h,(\mathbf{x_h}+\epsilon \mathbf{f} +\alpha_{\epsilon,h} \mathbf{B}) \cap \mathbf{R}^n_h), \ (\mathbf{f},\mathbf{l}) \in \mathrm{FL^+}(\mathbf{x_h})\bigg\},$$ and using (\ref{eq:recursive_1}) yields $\mathbf{z_h} \in V^{k+1}_{\epsilon,h}(t_h,\mathbf{x_h}) +  \mathbf{R}_{h,+}^p$.\\
  \end{enumerate}
\end{proof}

\begin{corollary} The sequence of approximate set-valued return
functions $V^k_{\epsilon,h}$ is finite.
\end{corollary}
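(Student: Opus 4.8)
The plan is to show that the recursion (\ref{eq:recursive_1})--(\ref{eq:recursive_2}) becomes stationary after finitely many steps, i.e.\ that there is an integer $K$ with $V^{k}_{\epsilon,h}=V^{K}_{\epsilon,h}$ for every $k\geq K$; in view of Proposition~\ref{prop:decreasingraph} (the graphs form a decreasing chain) this shows in particular that the sequence is finite. The structural fact that drives the argument is that, since $\epsilon>2h$, each iteration propagates information \emph{strictly forward in time by at least the fixed amount $\epsilon-2h$}: when $t_h\geq T-M\epsilon-h$ the value does not change at all by (\ref{eq:recursive_2}), and when $t_h<T-M\epsilon-h$ the value $V^{k+1}_{\epsilon,h}(t_h,\mathbf{x_h})$ produced by (\ref{eq:recursive_1}) depends on $V^{k}_{\epsilon,h}$ only through the values $V^{k}_{\epsilon,h}(\widetilde{t}_h,\cdot)$ with $\widetilde{t}_h\in(t_h+\epsilon+[-2h,2h])\cap\mathbf{R}_h$, hence with $\widetilde{t}_h\geq t_h+\epsilon-2h>t_h$. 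Thus the ``frozen'' layer $\{t_h\geq T-M\epsilon-h\}$ acts as a terminal condition that is gradually pulled back in time.

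Write $\tau=T-M\epsilon-h$. I would then establish, by induction on $j\geq0$, the claim: \emph{for every $(t_h,\mathbf{x_h})\in I_h\times\mathbf{R}^n_h$ with $t_h\geq\tau-j(\epsilon-2h)$ and every $k\geq j$, one has $V^{k}_{\epsilon,h}(t_h,\mathbf{x_h})=V^{j}_{\epsilon,h}(t_h,\mathbf{x_h})$.} The base case $j=0$ is immediate: for $t_h\geq\tau$, (\ref{eq:recursive_2}) gives $V^{k}_{\epsilon,h}(t_h,\mathbf{x_h})=V^{k-1}_{\epsilon,h}(t_h,\mathbf{x_h})=\ldots=V^{0}_{\epsilon,h}(t_h,\mathbf{x_h})$ for all $k$. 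For the induction step, a point with $t_h\geq\tau$ is already handled; and for a point with $\tau-(j+1)(\epsilon-2h)\leq t_h<\tau$, formula (\ref{eq:recursive_1}) applies, and its right-hand side — whether used to compute $V^{k}_{\epsilon,h}(t_h,\mathbf{x_h})$ from $V^{k-1}_{\epsilon,h}$ (with $k\geq j+1$) or $V^{j+1}_{\epsilon,h}(t_h,\mathbf{x_h})$ from $V^{j}_{\epsilon,h}$ — only sees values at times $\widetilde{t}_h\geq t_h+\epsilon-2h\geq\tau-j(\epsilon-2h)$; since $k-1\geq j$, the induction hypothesis makes these two sets of input values coincide, so the two expressions are equal and $V^{k}_{\epsilon,h}(t_h,\mathbf{x_h})=V^{j+1}_{\epsilon,h}(t_h,\mathbf{x_h})$ for all $k\geq j+1$.

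Finally, since $I_h\subset[-h,+\infty)$, it suffices to choose $K$ with $\tau-K(\epsilon-2h)\leq-h$, which is possible because $\epsilon-2h>0$; then every $t_h\in I_h$ satisfies the hypothesis of the claim with $j=K$, giving $V^{k}_{\epsilon,h}(t_h,\mathbf{x_h})=V^{K}_{\epsilon,h}(t_h,\mathbf{x_h})$ for all $(t_h,\mathbf{x_h})$ and all $k\geq K$, i.e.\ $V^{k}_{\epsilon,h}=V^{K}_{\epsilon,h}$ for $k\geq K$. (Equivalently, this shows that the decreasing chain $A^k=\mathrm{Graph}(V^k_{\epsilon,h}+\mathbf{R}_{h,+}^p)$ of Proposition~\ref{prop:chardis} stabilizes at $A^K$, the finite discrete viability kernel.) I expect the only slightly delicate point to be the bookkeeping of the time-shift inequalities, making sure that the two instances of (\ref{eq:recursive_1}) are literally evaluated on the same data; the remainder is a routine finite induction.
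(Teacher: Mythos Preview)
Your argument is correct, but it is not the route the paper takes for this corollary. The paper's proof is a two-line appeal: by Proposition~\ref{prop:chardis} one has $\mathrm{Graph}(V^{k}_{\epsilon,h}+\mathbf{R}_{h,+}^p)=A^k$, and the referenced result (Proposition~2.18 in \cite{Cardaliaguet99_Via}) asserts that the decreasing sequence $A^k$ stabilizes after finitely many steps; hence so does $V^{k}_{\epsilon,h}$. Your proof instead exploits directly the time structure of the recursion: each application of (\ref{eq:recursive_1}) pulls information from times at least $\epsilon-2h>0$ ahead, while (\ref{eq:recursive_2}) freezes everything beyond $T-M\epsilon-h$, so after roughly $(T-M\epsilon)/(\epsilon-2h)$ steps the whole domain $I_h$ is frozen. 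This is exactly the mechanism behind Proposition~\ref{propalgo}, which the paper states and proves \emph{after} the corollary, in the algorithmic section; you have effectively anticipated that proposition and used it to derive the corollary. What your approach buys is self-containment (no external citation needed) and an explicit bound on the stabilization index $K$; what the paper's approach buys is brevity and a clean conceptual link to the viability-kernel machinery already set up in \S\ref{s:approx}.
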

\begin{proof}
This follows from Proposition \ref{prop:chardis} and the fact that
the sequence $A_k$ is finite (Proposition
2.18, p.\ 195, \cite{Cardaliaguet99_Via}).\\
\end{proof}

We denote $k(\epsilon,h)$ the last element of this sequence.\\

\begin{corollary} \label{cor:conclusion} The epigraph of the approximate set-valued return function
$V^{k(\epsilon,h)}_{\epsilon,h}$ converges in the sense of
Painlev\'e-Kuratowski towards the epigraph of the set-valued return
function $V$, i.e.,
$$  \mathrm{Graph}(V + \mathbf{R}^p_+) = \lim_{\epsilon \rightarrow 0^+, \ \frac{h}{\epsilon}  \rightarrow 0^+ }  \mathrm{Graph}(V^{k(\epsilon,h)}_{\epsilon,h} +  \mathbf{R}_{h,+}^p).$$
\end{corollary}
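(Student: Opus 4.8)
The plan is to realise the identity as a short chain of steps: one equality from \S\ref{s:charac}, a convergence result and an algorithmic identity imported from \cite{Cardaliaguet99_Via}, and one equality from \S\ref{s:conv}. By Proposition~\ref{prop:char}, applied with the enlarged set $\mathcal{H}$ of Remark~\ref{rem:biggerM} (i.e.\ with $\overline{M}_\mathbf{L}$ in place of $M_\mathbf{L}$), we have $\mathrm{Graph}(V+\mathbf{R}^p_+) = \mathrm{Viab}_\phi(\mathcal{H})$. Hence it suffices to prove that
$$\mathrm{Graph}(V^{k(\epsilon,h)}_{\epsilon,h}+\mathbf{R}_{h,+}^p) \ \longrightarrow \ \mathrm{Viab}_\phi(\mathcal{H})$$
in the Painlev\'e--Kuratowski sense as $\epsilon\to 0^+$ and $h/\epsilon\to 0^+$; that is, that every point of $\mathrm{Viab}_\phi(\mathcal{H})$ is a limit of points of the left-hand sets and that every cluster point of such points belongs to $\mathrm{Viab}_\phi(\mathcal{H})$.

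Next I would feed in the approximation machinery of \cite{Cardaliaguet99_Via}. The map $\phi$ is Marchaud and bounded by $M=\max\{1,M_{\mathrm{FL}}\}$, and the two Theorems of \S\ref{s:approx} show that $\phi_\epsilon$ satisfies ($\mathbf{H_0}$)--($\mathbf{H_2}$) and that $\Gamma_{\epsilon,h}$ satisfies ($\mathbf{H_3}$)--($\mathbf{H_4}$). Therefore Theorem~2.19 (p.~195) of \cite{Cardaliaguet99_Via} applies and gives that the finite discrete viability kernel $\mathrm{Viab}_{\Gamma_{\epsilon,h}}(\mathcal{H}_h)$ converges in the Painlev\'e--Kuratowski sense to $\mathrm{Viab}_\phi(\mathcal{H})$ as $\epsilon\to 0^+$, $h/\epsilon\to 0^+$. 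Moreover Proposition~2.18 (p.~195) of the same reference describes this finite discrete viability kernel as the stabilised value of the decreasing sequence $A^0=\mathcal{H}_h\supset A^1\supset A^2\supset\cdots$, with $A^{k+1}=\{(t_h,\mathbf{x_h},\mathbf{z_h})\in A^k : \Gamma_{\epsilon,h}(t_h,\mathbf{x_h},\mathbf{z_h})\cap A^k\neq\emptyset\}$; since the $A^k$ are finite sets the sequence stabilises after finitely many steps, so $\mathrm{Viab}_{\Gamma_{\epsilon,h}}(\mathcal{H}_h)=A^{k(\epsilon,h)}$.

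Finally I would invoke Proposition~\ref{prop:chardis}, which gives $A^k=\mathrm{Graph}(V^k_{\epsilon,h}+\mathbf{R}_{h,+}^p)$ for every $k$, hence in particular $A^{k(\epsilon,h)}=\mathrm{Graph}(V^{k(\epsilon,h)}_{\epsilon,h}+\mathbf{R}_{h,+}^p)$; chaining this with $\mathrm{Viab}_{\Gamma_{\epsilon,h}}(\mathcal{H}_h)=A^{k(\epsilon,h)}$ and with the convergence of the previous paragraph yields the claimed limit. Along the way I would note that the standing assumptions attached to Propositions~\ref{prop:decreasingraph}--\ref{prop:chardis} ($T$ a multiple of $h$, $\epsilon-2h>2h$, the prescribed form of $V^0_{\epsilon,h}$ on $\{t_h\ge h\}$, and the calibration (\ref{eq:keyrelation})) are compatible with taking $\epsilon\to 0^+$, $h/\epsilon\to 0^+$: since $\alpha_{\epsilon,h}=2h+\epsilon hK+\epsilon^2 KM$, dividing (\ref{eq:keyrelation}) by $\epsilon$ shows it holds for all small $\epsilon$ in that regime once $\overline{M}_\mathbf{L}>M_\mathbf{L}$ is fixed, and the lattice/horizon constraints can be arranged along the sequence. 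The step I expect to demand the most care is precisely this bookkeeping: verifying that the abstract ``finite discrete viability kernel'' of \cite{Cardaliaguet99_Via}, built for the discretised data $\mathcal{H}_h$ and $\Gamma_{\epsilon,h}$, coincides with the set $A^{k(\epsilon,h)}$ produced by the recursion used here, i.e.\ that our $A^k$-recursion is literally the algorithm of Proposition~2.18 in \cite{Cardaliaguet99_Via}. Once that identification is confirmed, the corollary is immediate.
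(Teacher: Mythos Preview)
Your proposal is correct and follows essentially the same route as the paper's own proof: both combine Proposition~\ref{prop:char} (with Remark~\ref{rem:biggerM}), Theorem~2.19 and Proposition~2.18 of \cite{Cardaliaguet99_Via}, and Proposition~\ref{prop:chardis} to chain $\mathrm{Graph}(V+\mathbf{R}^p_+)=\mathrm{Viab}_\phi(\mathcal{H})$, the convergence of the finite discrete viability kernels, and the identification $A^{k(\epsilon,h)}=\mathrm{Graph}(V^{k(\epsilon,h)}_{\epsilon,h}+\mathbf{R}_{h,+}^p)$. Your version is somewhat more explicit in verifying that the hypotheses ($\mathbf{H_0}$)--($\mathbf{H_4}$) and the standing assumptions preceding Propositions~\ref{prop:decreasingraph}--\ref{prop:chardis} are consistent with the limiting regime, which the paper leaves implicit.
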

\begin{proof}
From \cite{Cardaliaguet99_Via} (Theorem 2.19, p.\ 195), we have
$$   \mathrm{Viab}_\phi(\mathcal{H}) = \lim_{\epsilon \rightarrow 0^+, \ \frac{h}{\epsilon}  \rightarrow 0^+ }   \mathrm{\overrightarrow{Viab}}_{\Gamma_{\epsilon,h}}(\mathcal{H}_h).$$
Moreover, from Proposition \ref{prop:char}, we have
$$  \mathrm{Graph}(V + \mathbf{R}^p_+)  = \mathrm{Viab}_\phi(\mathcal{H}).$$
Finally, from Proposition \ref{prop:chardis} and
\cite{Cardaliaguet99_Via} (Proposition 2.18, p.\ 195), we have
$$\mathrm{Graph}(V^{k(\epsilon,h)}_{\epsilon,h} +  \mathbf{R}_{h,+}^p) = A^{k(\epsilon,h)} =   \mathrm{\overrightarrow{Viab}}_{\Gamma_{\epsilon,h}}(\mathcal{H}_h).$$
The desired result follows.
\end{proof}

\section{A General Numerical Algorithm} \label{s:numalgo}

In this section, we present a general algorithm to determine the
approximate set-valued return function
$V^{k(\epsilon,h)}_{\epsilon,h}$. As shown in Proposition \ref{propalgo}, to find $V^{k(\epsilon,h)}_{\epsilon,h}$, it is not needed to compute $V^{k}_{\epsilon,h}, \ k=0,\ldots,k(\epsilon,h)$ over their entire domain $I_h \times \mathbf{R}_{h}^n$.\\

\begin{proposition} \label{propalgo} $\forall k \geq 0, \ \forall t_h \in I_h, \ t_h \geq T-M\epsilon-h - k (\epsilon-2h), \
\forall \mathbf{x_h} \in \mathbf{R}_{h}^n,$
\begin{equation} \label{eq:approxsvrf}
V^{k+1}_{\epsilon,h}(t_h,\mathbf{x_h}) = V^{k}_{\epsilon,h}(t_h,\mathbf{x_h}).
\end{equation}
\end{proposition}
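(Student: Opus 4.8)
The plan is a straightforward induction on $k$ in which the recursive definition (\ref{eq:recursive_1})--(\ref{eq:recursive_2}) is combined with the elementary observation that the time arguments $\widetilde{t}_h$ appearing on the right-hand side of (\ref{eq:recursive_1}) lie at least $\epsilon-2h$ ahead of $t_h$. For the base case $k=0$, if $t_h \geq T-M\epsilon-h$ then (\ref{eq:recursive_2}) gives immediately $V^1_{\epsilon,h}(t_h,\mathbf{x_h}) = V^0_{\epsilon,h}(t_h,\mathbf{x_h})$, which is (\ref{eq:approxsvrf}) for $k=0$.

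For the inductive step I would assume (\ref{eq:approxsvrf}) holds at level $k$, i.e.\ that $V^{k+1}_{\epsilon,h}$ and $V^k_{\epsilon,h}$ agree on $\{t_h \in I_h : t_h \geq T-M\epsilon-h-k(\epsilon-2h)\} \times \mathbf{R}^n_h$, and fix $t_h \in I_h$ with $t_h \geq T-M\epsilon-h-(k+1)(\epsilon-2h)$ together with $\mathbf{x_h} \in \mathbf{R}^n_h$. If $t_h \geq T-M\epsilon-h$, two applications of (\ref{eq:recursive_2}) give $V^{k+2}_{\epsilon,h}(t_h,\mathbf{x_h}) = V^{k+1}_{\epsilon,h}(t_h,\mathbf{x_h})$ and we are done. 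Otherwise $t_h < T-M\epsilon-h$, so both $V^{k+2}_{\epsilon,h}(t_h,\mathbf{x_h})$ and $V^{k+1}_{\epsilon,h}(t_h,\mathbf{x_h})$ are given by (\ref{eq:recursive_1}), differing only in that the former is built from $V^{k+1}_{\epsilon,h}$ and the latter from $V^k_{\epsilon,h}$, each evaluated at arguments $\widetilde{t}_h \in (t_h+\epsilon+[-2h,2h])\cap\mathbf{R}_h$ and $\mathbf{\widetilde{x}_h} \in (\mathbf{x_h}+\epsilon\mathbf{f}+\alpha_{\epsilon,h}\mathbf{B})\cap\mathbf{R}^n_h$ with $(\mathbf{f},\mathbf{l}) \in \mathrm{FL^+}(\mathbf{x_h})$. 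Every such $\widetilde{t}_h$ satisfies
\[ \widetilde{t}_h \geq t_h+\epsilon-2h \geq \big(T-M\epsilon-h-(k+1)(\epsilon-2h)\big)+(\epsilon-2h) = T-M\epsilon-h-k(\epsilon-2h), \]
so the induction hypothesis yields $V^{k+1}_{\epsilon,h}(\widetilde{t}_h,\mathbf{\widetilde{x}_h}) = V^k_{\epsilon,h}(\widetilde{t}_h,\mathbf{\widetilde{x}_h})$ for every pair $(\widetilde{t}_h,\mathbf{\widetilde{x}_h})$ occurring in (\ref{eq:recursive_1}). Hence the sets on which $\mathcal{E}(\cdot)$ acts in the two instances of (\ref{eq:recursive_1}) coincide, whence $V^{k+2}_{\epsilon,h}(t_h,\mathbf{x_h}) = V^{k+1}_{\epsilon,h}(t_h,\mathbf{x_h})$ and the induction is complete.

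The only substantive point is the displayed inequality, which is precisely what produces the forward shift $\epsilon-2h$ per iteration; everything else is unwinding the recursion, so I anticipate no real obstacle. One should nevertheless record that the arguments $\widetilde{t}_h$ stay inside $I_h$, so that $V^{k+1}_{\epsilon,h}$ and $V^k_{\epsilon,h}$ are defined there: this follows from $t_h < T-M\epsilon-h$, $\epsilon > 2h$, and $M \geq 1$, which together give $\widetilde{t}_h \leq t_h+\epsilon+2h < T+h$. This, like the standing hypotheses used earlier ($T$ a multiple of $h$ and $\epsilon-2h > 2h$), is bookkeeping rather than mathematics.
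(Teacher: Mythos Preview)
Your proof is correct and follows essentially the same induction argument as the paper: the key inequality $\widetilde{t}_h \geq t_h + (\epsilon-2h) \geq T-M\epsilon-h-k(\epsilon-2h)$ is exactly what the paper uses to feed the induction hypothesis into the recursion (\ref{eq:recursive_1}). Your explicit case split on whether $t_h \geq T-M\epsilon-h$ in the inductive step, and your remark that $\widetilde{t}_h$ remains in $I_h$, are details the paper leaves implicit, but the structure is identical.
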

\begin{proof}
For $k=0$, (\ref{eq:approxsvrf}) directly follows from
(\ref{eq:recursive_2}). Assume now that (\ref{eq:approxsvrf}) holds
for $k > 0$. We prove that (\ref{eq:approxsvrf}) also holds for
$k+1$. Let $t_h \in I_h, \ t_h \geq T-M\epsilon-h - (k+1)
(\epsilon-2h)$ and $\widetilde{t}_h \in (t_h+\epsilon+[-2h,2h]) \cap
\mathbf{R}_{h}$. Then,
$$\widetilde{t}_h \geq t_h+(\epsilon-2h) \geq T-M\epsilon-h - (k+1) (\epsilon-2h) + (\epsilon-2h) = T-M\epsilon-h - k (\epsilon-2h).$$
From the induction assumption, we get $\forall \mathbf{x_h} \in
\mathbf{R}_{h}^n, \ \forall (\mathbf{f},\mathbf{l}) \in
\mathrm{FL^+}(\mathbf{x_h}), \ \forall  \mathbf{\widetilde{x}_h} \in
(\mathbf{x_h}+\epsilon \mathbf{f} +\alpha_{\epsilon,h} B) \cap
\mathbf{R}^n_h,$
$$
V^{k+1}_{\epsilon,h}(\widetilde{t}_h,\mathbf{\widetilde{x}_h}) = V^{k}_{\epsilon,h}(\widetilde{t}_h,\mathbf{\widetilde{x}_h}).
$$
Hence, using (\ref{eq:recursive_1}), we obtain $
V^{k+2}_{\epsilon,h}(t_h,\mathbf{x_h}) = V^{k+1}_{\epsilon,h}(t_h,\mathbf{x_h}),
$ which completes the proof.\\
\end{proof}


We now present  a very general numerical algorithm to approximate
the set $V(0,\mathbf{x_0})$, where for simplicity, we take the
initial state $ \mathbf{x_0}$ in $\mathbf{R}_h^n$. From Corollary
\ref{cor:conclusion}, the suggested approximation to
$V(0,\mathbf{x_0})$ is given by the finite set
$V^{k(\epsilon,h)}_{\epsilon,h}(-h,\mathbf{x_0}).$ The proposed
numerical algorithm is composed of two stages. In the first stage
(\textbf{Algorithm 1}), the computational domain is determined using
the bound on the dynamics. Once the computational domain has been
determined, in the second stage (\textbf{Algorithm
2}), $V^{k(\epsilon,h)}_{\epsilon,h}(-h,\mathbf{x_0})$ is calculated
using the multiple dynamic programming equation (\ref{eq:recursive_1})-(\ref{eq:recursive_2}) together with
Proposition \ref{propalgo}. \\ \\ Choose for example $\epsilon_i =
1/2^{i}$ and $h_i = 1/2^{2i}$. Let $J$ be the number of
discretization steps in $h_i$
for the interval $[-h_i,T+h_i]$, i.e., $J = (T+2h_i)/h_i + 1$ (for simplicity, we assume that $T$ is a multiple of $h_i$) and let $t_j = -h_i + j*h_i$. First, we need to determine the computational domains $\Omega_{j}, \ j=0,\ldots,J-1$.\\ \\
\textbf{Algorithm 1}\\ \\
\textbf{Initialization} $\forall t_j, \  -h_i \leq t_j <
\epsilon_i-3h_i, \ \Omega_{j} = \{\mathbf{x_0}\}.$ Otherwise,
$\Omega_{j} = \emptyset.$\\ \\
\textbf{Main loop}\\
\begin{description}
\item[1]\textbf{Set} $j = 0.$
\item[2]\textbf{Repeat}
    \begin{description}
    \item[2.1] \textbf{Set} $\mathbf{x_h}$ to the first grid point in $\Omega_{j}.$
    \item[2.2] \textbf{Repeat}
    \item[2.3] \textbf{For all} $t_{j'}, \ t_j +\epsilon_i - 2h_i \leq t_{j'} \leq t_j +\epsilon_i + 2h_i$,
    $$\Omega_{j'} = \Omega_{j'} \cup \{(\mathbf{x_h}+\epsilon_i \mathbf{f} +\alpha_{\epsilon_i,h_i} \mathbf{B}) \cap \mathbf{R}^n_{h_i}, (\mathbf{f},\mathbf{l}) \in \mathrm{FL^+}(\mathbf{x_h})  \}.$$
    \item[2.4] \textbf{Until} all the grid points in $\Omega_{j}$ have been
    visited.
    \end{description}
\item[3] \textbf{Until} $j= J-1.$\\ \\
\end{description}
\textbf{Algorithm 2}\\ \\
\textbf{Initialization}
$\forall t_j, \ T-M \epsilon_i -h_i \leq t_j \leq T + h_i, \ \forall  \mathbf{x_h} \in \Omega_{j}, \ V^{k(\epsilon_i,h_i)}_{\epsilon_i,h_i}(t_j,\mathbf{x_h}) = \{\mathbf{0}\}.$ Otherwise,
$V^{k(\epsilon_i,h_i)}_{\epsilon_i,h_i}(t_j,\mathbf{x_h})  = \emptyset.$\\ \\
Let $j^*$ be the largest index such that $t_{j^*} < T-M \epsilon_i -h_i.$  \\ \\
\textbf{Main loop}\\
\begin{description}
\item[1]\textbf{Set} $j = j^*.$
\item[2]\textbf{Repeat}
    \begin{description}
    \item[2.1] \textbf{Set} $\mathbf{x_h}$ to the first grid point in $\Omega_{j}$ and $A = \emptyset$.
    \item[2.2] \textbf{Repeat}
    \item[2.3] \textbf{For all} $t_{j'}, \ t_j +\epsilon_i - 2h_i \leq t_{j'} \leq t_j +\epsilon_i + 2h_i$,
    $$A = A \cup \{(\epsilon_i\mathbf{l}+ \alpha_{\epsilon_i,h_i} \mathbf{B})\cap \mathbf{R}^p_{h_i}+ V^{k(\epsilon_i,h_i)}_{\epsilon_i,h_i}(t_{j'},(\mathbf{x_h}+\epsilon_i \mathbf{f} +\alpha_{\epsilon_i,h_i} \mathbf{B}) \cap \mathbf{R}^n_{h_i})), (\mathbf{f},\mathbf{l}) \in \mathrm{FL^+}(\mathbf{x_h})\}.$$
    \item[2.4] \textbf{Set}  $V^{k(\epsilon_i,h_i)}_{\epsilon_i,h_i}(t_j,\mathbf{x_h}) = \mathcal{E}(A).$
    \item[2.5] \textbf{Until} all the grid points in $\Omega_{j}$ have been
    visited.
    \end{description}
\item[3] \textbf{Until} $j= 0.$\\
\end{description}

To reduce the size of the set $A$ in \textbf{Algorithm 2}, it is
possible to only keep the Pareto optimal
elements at each  iteration in \textbf{Step 2.3}. This procedure is justified by the two following lemmas.\\

\begin{lemma} \label{lemma:2sets}
Let $S_1$ and $S_2$ be two finite subsets of $\mathbf{R}^{p}$. Then,
$\mathcal{E}(S_1 \cup S_2,P) = \mathcal{E}(S_1 \cup
\mathcal{E}(S_2),P)$.
\end{lemma}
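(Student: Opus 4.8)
The plan is to prove the two inclusions separately, exploiting the fact that all sets involved are finite so that generalized Pareto optimal elements exist for every nonempty subset (Corollary \ref{cor:extstabcompactset}) and that external stability holds, i.e. $S_2 + P = \mathcal{E}(S_2,P) + P$. First I would observe the elementary monotonicity fact that if $A \subset B$ then $\mathcal{E}(B,P) \cap A \subset \mathcal{E}(A,P)$, and more usefully that $\mathcal{E}(A \cup B, P) \subset \mathcal{E}(A,P) \cup \mathcal{E}(B,P)$: indeed a point of $S$ not dominated within the larger set $A\cup B$ is in particular not dominated within whichever of $A$, $B$ it belongs to. Combined with $\mathcal{E}(S_2,P) \subset S_2$, this immediately gives one direction once we also know $\mathcal{E}(S_1 \cup S_2,P)$ is controlled on the $S_2$ part by $\mathcal{E}(S_2,P)$.

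For the inclusion $\mathcal{E}(S_1 \cup S_2,P) \subset \mathcal{E}(S_1 \cup \mathcal{E}(S_2),P)$, take $\mathbf{y} \in \mathcal{E}(S_1 \cup S_2,P)$. If $\mathbf{y} \in S_1$, then since $S_1 \cup \mathcal{E}(S_2) \subset S_1 \cup S_2$ and $\mathbf{y}$ is non-dominated in the larger set, it is non-dominated in the smaller one, so $\mathbf{y} \in \mathcal{E}(S_1 \cup \mathcal{E}(S_2),P)$. If instead $\mathbf{y} \in S_2$, I claim $\mathbf{y} \in \mathcal{E}(S_2,P)$: if not, some $\mathbf{y}' \in S_2 \subset S_1 \cup S_2$ with $\mathbf{y}' \neq \mathbf{y}$ would satisfy $\mathbf{y} \in \mathbf{y}' + P$, contradicting $\mathbf{y} \in \mathcal{E}(S_1\cup S_2,P)$. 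So $\mathbf{y} \in \mathcal{E}(S_2) \subset S_1 \cup \mathcal{E}(S_2)$, and again non-domination in the larger set descends to the subset.

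For the reverse inclusion $\mathcal{E}(S_1 \cup \mathcal{E}(S_2),P) \subset \mathcal{E}(S_1 \cup S_2,P)$, take $\mathbf{y} \in \mathcal{E}(S_1 \cup \mathcal{E}(S_2),P)$ and suppose for contradiction that $\mathbf{y} \notin \mathcal{E}(S_1 \cup S_2,P)$, i.e. there is $\mathbf{y}' \in S_1 \cup S_2$, $\mathbf{y}' \neq \mathbf{y}$, with $\mathbf{y} \in \mathbf{y}' + P$. If $\mathbf{y}' \in S_1$ this already contradicts $\mathbf{y} \in \mathcal{E}(S_1 \cup \mathcal{E}(S_2),P)$ since $\mathbf{y}' \in S_1 \cup \mathcal{E}(S_2)$. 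If $\mathbf{y}' \in S_2$, use external stability of $S_2$: write $\mathbf{y}' = \mathbf{y}'' + \mathbf{d}$ with $\mathbf{y}'' \in \mathcal{E}(S_2)$ and $\mathbf{d} \in P$, so $\mathbf{y} \in \mathbf{y}'' + \mathbf{d} + P \subset \mathbf{y}'' + P$ because $P$ is a convex cone. Since $\mathbf{y}'' \in \mathcal{E}(S_2) \subset S_1 \cup \mathcal{E}(S_2)$, non-domination of $\mathbf{y}$ forces $\mathbf{y}'' = \mathbf{y}$, hence $\mathbf{y} \in S_2$ and $\mathbf{y} \in \mathcal{E}(S_2,P)$; but then $\mathbf{y} = \mathbf{y}' - \mathbf{d}$ with $\mathbf{y}' \in S_2$, $\mathbf{d} \in P$, i.e. $\mathbf{y}' \in \mathbf{y} + P$ with $\mathbf{y}' \in S_2$, contradicting $\mathbf{y} \in \mathcal{E}(S_2,P)$ unless $\mathbf{y}' = \mathbf{y}$, which we excluded. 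This yields the contradiction and closes the proof.

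The routine part is the bookkeeping of which set a point lives in; the one place requiring genuine care — the main obstacle — is the $\mathbf{y}' \in S_2$ branch of the reverse inclusion, where a point of $\mathcal{E}(S_1 \cup \mathcal{E}(S_2),P)$ could a priori be dominated by some $\mathbf{y}' \in S_2 \setminus \mathcal{E}(S_2)$ that was discarded. The resolution is exactly the external stability of the finite set $S_2$ together with the cone property $\mathbf{d} + P \subset P$ for $\mathbf{d} \in P$, which lets us replace the discarded dominator $\mathbf{y}'$ by a retained Pareto point $\mathbf{y}''$ that still dominates $\mathbf{y}$.
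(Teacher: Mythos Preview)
Your approach is essentially the same as the paper's: both directions are handled by case analysis and external stability, and your first direction is in fact slightly cleaner than the paper's (you explicitly check $\mathbf{y}\in S_1\cup\mathcal{E}(S_2)$ before passing to the subset, whereas the paper leaves that implicit).

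There is, however, a small slip in the last line of your reverse inclusion. Having established $\mathbf{y}''=\mathbf{y}$ and hence $\mathbf{y}\in\mathcal{E}(S_2,P)$, you write $\mathbf{y}'=\mathbf{y}+\mathbf{d}\in\mathbf{y}+P$ and claim this contradicts $\mathbf{y}\in\mathcal{E}(S_2,P)$. It does not: $\mathbf{y}'\in\mathbf{y}+P$ says $\mathbf{y}$ dominates $\mathbf{y}'$, which is perfectly compatible with $\mathbf{y}$ being Pareto optimal in $S_2$. The contradiction you want is already in hand without this detour: you are in the case $\mathbf{y}'\in S_2$, $\mathbf{y}'\neq\mathbf{y}$, $\mathbf{y}\in\mathbf{y}'+P$, and you have just shown $\mathbf{y}\in\mathcal{E}(S_2,P)$; these two statements are directly incompatible by Definition~\ref{def:minele}. (Alternatively, combine $\mathbf{y}\in\mathbf{y}'+P$ with $\mathbf{y}'\in\mathbf{y}+P$ and use that $P$ is pointed to force $\mathbf{y}=\mathbf{y}'$.) With this one-line correction the argument is complete and matches the paper's proof.
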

\begin{proof}
Take $\mathbf{z_1} \in \mathcal{E}(S_1 \cup S_2,P)$. Assume for contradiction that $\mathbf{z_1} \notin
\mathcal{E}(S_1 \cup \mathcal{E}(S_2,P),P).$ Then, by external stability,
there exists $\mathbf{z_2} \in \mathcal{E}(S_1 \cup \mathcal{E}(S_2,P),P) \subset S_1 \cup S_2$
such that $\mathbf{z_1} \in \mathbf{z_2} + P \backslash \{\mathbf{0}\}$. But, this contradicts
$\mathbf{z_1} \in \mathcal{E}(S_1 \cup S_2,P)$.\\ \\
Conversely, take $\mathbf{z_1} \in \mathcal{E}(S_1 \cup
\mathcal{E}(S_2),P).$ Assume for contradiction that $\mathbf{z_1}
\notin \mathcal{E}(S_1 \cup S_2,P).$ Then, by external stability,
there exists $\mathbf{z_2} \in \mathcal{E}(S_1 \cup S_2,P) \subset
S_1 \cup S_2$ such that $\mathbf{z_1} \in \mathbf{z_2} + P
\backslash \{\mathbf{0}\}$. Assume that $\mathbf{z_2} \notin S_2$.
Then, necessarily $\mathbf{z_2} \in S_1 \cup \mathcal{E}(S_2,P)$.
But, this contradicts $\mathbf{z_1} \in \mathcal{E}(S_1 \cup
\mathcal{E}(S_2,P),P)$. Assume now that $\mathbf{z_2} \in S_2$.
Then, by external stability, there exists $\mathbf{z_3} \in
\mathcal{E}(S_2,P)$ such that $\mathbf{z_2} \in \mathbf{z_3} + P
\backslash \{\mathbf{0}\}$. Hence, $\mathbf{z_1} \in \mathbf{z_3} +
P \backslash \{\mathbf{0}\}$ with $\mathbf{z_3} \in S_1 \cup
\mathcal{E}(S_2,P).$ But, this again contradicts $\mathbf{z_1} \in
\mathcal{E}(S_1 \cup \mathcal{E}(S_2,P),P)$.
\end{proof}\\

\begin{proposition} \label{lemma:multiplesets}
Let $S_1,\ldots,S_I$ be finite subsets of $\mathbf{R}^{p}$. Then,
$$\mathcal{E}\bigg(\bigcup_{i=1}^I S_i,P\bigg) = E_I,$$ where $E_I$
is recursively defined by $E_1 = \mathcal{E}(S_1,P)$ and the
relation
$$E_{i+1} = \mathcal{E}(S_{i+1}\cup E_i,P).$$
\end{proposition}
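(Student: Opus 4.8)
The plan is to induct on $I$, using Lemma~\ref{lemma:2sets} as the engine for the inductive step. The base case $I=1$ is immediate since $E_1 = \mathcal{E}(S_1,P)$ by definition, and trivially $\mathcal{E}(\bigcup_{i=1}^1 S_i,P) = \mathcal{E}(S_1,P)$. For the inductive step, assume the claim holds for some $I \geq 1$, i.e., $\mathcal{E}(\bigcup_{i=1}^I S_i,P) = E_I$. I would then write $\bigcup_{i=1}^{I+1} S_i = \left(\bigcup_{i=1}^I S_i\right) \cup S_{I+1}$ and apply Lemma~\ref{lemma:2sets} with the roles $S_1 \leftarrow S_{I+1}$ and $S_2 \leftarrow \bigcup_{i=1}^I S_i$ (the lemma is symmetric in the sense that the union is commutative, so this relabeling is harmless). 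This gives
\[
\mathcal{E}\Bigl(\bigcup_{i=1}^{I+1} S_i,P\Bigr) = \mathcal{E}\Bigl(S_{I+1} \cup \mathcal{E}\bigl(\textstyle\bigcup_{i=1}^I S_i,P\bigr),P\Bigr).
\]
By the induction hypothesis the inner set $\mathcal{E}(\bigcup_{i=1}^I S_i,P)$ equals $E_I$, so the right-hand side becomes $\mathcal{E}(S_{I+1} \cup E_I,P)$, which is exactly $E_{I+1}$ by the defining recursion. This closes the induction.

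One detail worth checking is that Lemma~\ref{lemma:2sets} is stated for two finite subsets of $\mathbf{R}^p$, so I must note that finite unions of finite sets are finite (hence $\bigcup_{i=1}^I S_i$ is a legitimate argument for the lemma), and that $E_I$ is finite at each stage (being a subset of a finite set), so the recursion $E_{i+1} = \mathcal{E}(S_{i+1}\cup E_i,P)$ stays within the hypotheses of the lemma throughout. These are routine observations but should be mentioned explicitly.

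The main — and only — obstacle is purely bookkeeping: making sure the application of Lemma~\ref{lemma:2sets} is invoked with the arguments in the correct slots. Since Lemma~\ref{lemma:2sets} reads $\mathcal{E}(S_1 \cup S_2,P) = \mathcal{E}(S_1 \cup \mathcal{E}(S_2),P)$, it selectively replaces the \emph{second} set by its Pareto set; I want to replace the accumulated union $\bigcup_{i=1}^I S_i$ by its Pareto set $E_I$, so that union must occupy the $S_2$ slot and the new set $S_{I+1}$ the $S_1$ slot. Apart from that, there is no real content beyond the induction, and no convexity or closedness assumptions are needed since everything is finite.

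\begin{proof}
We proceed by induction on $I$. For $I=1$, the claim reduces to $\mathcal{E}(S_1,P) = E_1 = \mathcal{E}(S_1,P)$, which holds by definition of $E_1$.

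Assume the result holds for some $I \geq 1$, that is, $\mathcal{E}\bigl(\bigcup_{i=1}^I S_i,P\bigr) = E_I$. Since each $S_i$ is finite, $\bigcup_{i=1}^I S_i$ is finite, and $E_I = \mathcal{E}\bigl(\bigcup_{i=1}^I S_i,P\bigr)$ is finite as a subset of a finite set. Applying Lemma~\ref{lemma:2sets} with the first set taken to be $S_{I+1}$ and the second set taken to be $\bigcup_{i=1}^I S_i$ (both finite), we obtain
\[
\mathcal{E}\Bigl(\bigcup_{i=1}^{I+1} S_i,P\Bigr) = \mathcal{E}\Bigl(S_{I+1} \cup \bigcup_{i=1}^I S_i,P\Bigr) = \mathcal{E}\Bigl(S_{I+1} \cup \mathcal{E}\bigl(\textstyle\bigcup_{i=1}^I S_i,P\bigr),P\Bigr).
\]
By the induction hypothesis, $\mathcal{E}\bigl(\bigcup_{i=1}^I S_i,P\bigr) = E_I$, so
\[
\mathcal{E}\Bigl(\bigcup_{i=1}^{I+1} S_i,P\Bigr) = \mathcal{E}\bigl(S_{I+1} \cup E_I,P\bigr) = E_{I+1},
\]
where the last equality is the defining recursion for $E_{I+1}$. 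This completes the induction, and hence the proof.
\end{proof}
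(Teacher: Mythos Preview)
Your proof is correct and follows essentially the same approach as the paper: induction on $I$, with the inductive step reducing to a single application of Lemma~\ref{lemma:2sets} (taking $S_{I+1}$ in the first slot and $\bigcup_{i=1}^I S_i$ in the second), followed by the induction hypothesis to identify the inner Pareto set with $E_I$. Your version is slightly more explicit about the finiteness of $\bigcup_{i=1}^I S_i$ and $E_I$ needed to invoke the lemma, but otherwise the argument is identical.
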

\begin{proof}
We proceed by induction. For $I=1$, this is by definition. Assume
now that the relation holds up to $I$. We aim to prove that it
holds for $I+1$, i.e.,
$$\mathcal{E}\bigg(\bigcup_{i=1}^{I+1} S_i\bigg) = E_{I+1}.$$ Apply
Lemma \ref{lemma:2sets} to $\bigcup_{i=1}^{I} S_i$ and $S_{I+1}$.
Then, we get
$$\mathcal{E}\bigg(\bigcup_{i=1}^{I+1} S_i,P\bigg) = \mathcal{E}\bigg(S_{I+1} \cup \mathcal{E}\bigg(\bigcup_{i=1}^{I} S_i,P\bigg),P \bigg).$$
Using the induction assumption, this yields
$$\mathcal{E}\bigg(\bigcup_{i=1}^{I+1} S_i,P\bigg) = \mathcal{E}(S_{I+1} \cup E_I,P) = E_{I+1}.$$
\end{proof}\\

Using Proposition \ref{lemma:multiplesets}, we can change \textbf{Step 2.3} in \textbf{Algorithm 2} to \textbf{Step 2.3'} as follows:\\

\begin{description}
\item[2.1] \textbf{Set} $\mathbf{x_h}$ to the first point in $\Omega_{j}$ and $A = \emptyset$.
\item[2.2]
\item[2.3'] \textbf{For all} $t_{j'}, \ t_j +\epsilon_i - 2h_i \leq t_{j'} \leq t_j +\epsilon_i + 2h_i$,
    $$A = \mathcal{E}(A \cup \{(\epsilon_i\mathbf{l}+ \alpha_{\epsilon_i,h_i} \mathbf{B})\cap \mathbf{R}^p_{h_i}+ V^{k(\epsilon_i,h_i)}_{\epsilon_i,h_i}(t_{j'},(\mathbf{x_h}+\epsilon_i \mathbf{f} +\alpha_{\epsilon_i,h_i} \mathbf{B}) \cap \mathbf{R}^n_{h_i})), (\mathbf{f},\mathbf{l}) \in \mathrm{FL^+}(\mathbf{x_h})\}).$$
     \item[2.4] \textbf{Set}  $V^{k(\epsilon_i,h_i)}_{\epsilon_i,h_i}(t_j,\mathbf{x_h}) = \mathcal{E}(A).$
\item[2.5] \textbf{Until} all the points in $\Omega_{j}$ have been
visited.
\end{description}

\section{Numerical Examples} \label{s:ex}

In this section, the algorithms from \S \ref{s:numalgo} are applied
to a simple class of optimal control problems for which the
set-valued return function $V$ can be obtained analytically. The
convergence of
$V^{k(\epsilon_i,h_i)}_{\epsilon_i,h_i}(-h_i,\mathbf{x_0})$ towards
$V(0,\mathbf{x_0})$ is  investigated. Recall that incrementing $i$
by 1 means dividing the time step discretization by 2 and
the state step discretization by 4.

\subsection{Description} \label{subsec:description}

Consider the following simple autonomous biobjective ($p=2$) optimal control problem. The one-dimensional ($n=1$) dynamics
is simply
\begin{displaymath}
 \dot{x}(s) = u(s), \ s \in [0,T],
\end{displaymath}
with $U = \{-1,1\}$ and initial condition $x_0$. The cost of a
trajectory $x(\cdot)$ over $I$ is given by
\begin{displaymath}
\mathbf{J}_1(0,x_0,u(\cdot))  =  \displaystyle \int_{0}^{T} P(x(s)) u(s) \ \mathrm{d}s \ \mathrm{and} \ \mathbf{J}_2(0,x_0,u(\cdot))  = \displaystyle \int_{0}^{T} u(s) \ \mathrm{d}s, \\
\end{displaymath}
where $P(\cdot)$ is a given polynomial.\\

The objective space $Y(0,x_0)$ for the problem above can be easily
determined. Let $\alpha = \mu(\{t \in I, \ u(t) = 1 \})/T$ where
$\mu(\cdot)$ denotes the Lebesgue measure, then
$$ \mathbf{J}_2(0,x_0,u(\cdot)) = \int_{0}^{T} u(s) \ \mathrm{d}s = \alpha T  -(1-\alpha)T = (2\alpha-1)T, $$
and $$\mathbf{J}_1(0,x_0,u(\cdot)) = \int_{0}^{T} P(x(s)) u(s) \
\mathrm{d}s = \int_{0}^{T} P(x(s)) \dot{x}(s) \ \mathrm{d}s  =
[Q(x(t))]_0^T,$$ where $Q(\cdot)$ is an antiderivative of
$P(\cdot)$. With $$x(T) = x(0) + \int_{0}^{T} u(s) \ \mathrm{d}s =
x_0 + (2\alpha-1)T,$$ and defining $\delta = (2\alpha-1)T$, we get
\begin{displaymath}
\mathbf{J}_1(0,x_0,u(\cdot))  =  Q(x_0+\delta) - Q(x_0) \
\mathrm{and} \ \mathbf{J}_2(0,x_0,u(\cdot))  =  \delta.
\end{displaymath}
The set $Y(0,x_0)$ can finally be obtained by varying $\delta$ between $-T$ and $T$. \\

It is possible to derive a systematic procedure that gives an
interval ($\subset [-T,T]$) such that for all $\delta$ in this
interval, the corresponding element $(\mathbf{J}_1(0,x_0,u(\cdot)),
\mathbf{J}_2(0,x_0,u(\cdot))) = (Q(x_0+\delta) - Q(x_0),\delta)$ in
the objective space  is a Pareto optimal element. We will not
present this procedure here, and therefore assume  that the Pareto
optimal set $V(0,x_0)$ is known.

\subsection{Results}

We consider four different polynomials $P(\cdot)$ and initial
conditions $x_0$ with $T=0.5$, which yield four problems (MOC1),
(MOC2), (MOC3), and (MOC4). We have chosen these polynomials such that
the Pareto optimal sets $V(0,x_0)$ present different characteristics.
For each problem and for  $i=3$, $i=4$, and $i=5$, we compute
$V^{k(\epsilon_i,h_i)}_{\epsilon_i,h_i}(-h_i,x_0)$ using \textbf{Algorithm 1} and \textbf{Algorithm 2}
from  \S \ref{s:numalgo}, provide the cardinality of $V^{k(\epsilon_i,h_i)}_{\epsilon_i,h_i}(-h_i,x_0)$, i.e.,
$|V^{k(\epsilon_i,h_i)}_{\epsilon_i,h_i}(-h_i,x_0)|$, calculate the
Hausdorff distance \cite[p.~365]{SVA_book}
$\mathcal{H}(V^{k(\epsilon_i,h_i)}_{\epsilon_i,h_i}(-h_i,x_0),V(0,x_0))$
between $V^{k(\epsilon_i,h_i)}_{\epsilon_i,h_i}(-h_i,x_0)$
and $V(0,x_0)$, and finally generate a ''normalized'' Hausdorff distance
$$ \overline{\mathcal{H}}(V^{k(\epsilon_i,h_i)}_{\epsilon_i,h_i}(-h_i,x_0),V(0,x_0)) = \frac{\mathcal{H}(V^{k(\epsilon_i,h_i)}_{\epsilon_i,h_i}(-h_i,x_0),V(0,x_0))}{\mathcal{H}(V^{k(\epsilon_3,h_3)}_{\epsilon_3,h_3}(-h_3,x_0),V(0,x_0))}.$$
Our results are summarized in Tables \ref{tab-firstP}, \ref{tab-secondP},  \ref{tab-thirdP}, and \ref{tab-fourthP} and also
in Figures \ref{fig-firstP}, \ref{fig-secondP},  \ref{fig-thirdP}, and \ref{fig-fourthP}. Tables \ref{tab-firstP}, \ref{tab-secondP},  \ref{tab-thirdP}, and \ref{tab-fourthP} also contain the size of the corresponding problem. The first number corresponds to the total number of grid points, i.e., the cardinality of all the sets  $\Omega_j$. The second number corresponds to the total number of successors for all
the grid points, where a successor to a grid point $\mathbf{x_h}$ is defined as a grid point that can be reached
from $\mathbf{x_h}$. Using Graph Theory terminology, the size of a problem would correspond to the number of nodes and
 vertices respectively. Figures \ref{fig-firstP}, \ref{fig-secondP},  \ref{fig-thirdP}, and \ref{fig-fourthP} display the objective space,
the Pareto optimal set, and the approximate Pareto optimal sets for $i=3$, $i=4$, and $i=5$ or each problem.\\

\begin{description}
  \item[(MOC1)] $P(x) = x-1, \ x_0 = 1$. The set $Y(0,x_0)+\mathbf{R}_+^2$ is convex.
  Hence, it is possible to obtain every element of $V(0,x_0)$
  using the weighting method. 
  \item[(MOC2)] $P(x) = -x+1, \ x_0 = 1.5$. The set $Y(0,x_0)-\mathbf{R}_+^2$ is convex. Only the two Pareto optimal elements for $\delta = -T$
  and $\delta = T$ can be obtained using the weighting method. 
  \item[(MOC3)] $P(x) = -2x^3 -15/4 x^2 + 2/75x+ 1/5, \ x_0 = 0$. The set $Y(0,x_0) + \mathbf{R}_+^2$ is nonconvex and the set $V(0,x_0)$ is
  nonconnected. More precisely, $V(0,x_0)$ is the union of two sets. 
  \item[(MOC4)] $P(x) = -3/2x -1/8, \ x_0 = 0$. This problem is similar to (MOC3).\\ 
\end{description}

\begin{remark} To reduce the size of the problems,
we have proceeded to some simplifications in \emph{\textbf{Algorithm 1}} and \emph{\textbf{Algorithm 2}}. First, we have individually computed $\alpha_{\epsilon_i,h_i}$ for the dynamics $\mathbf{f}(\cdot,\cdot)$ and each component of the  running cost $\mathbf{L}(\cdot,\cdot)$. Second,  we have reduced  the interval $[t_j +\epsilon_i - 2h_i,t_j +\epsilon_i + 2h_i]$ to the single time $t_j +\epsilon_i - 2h_i$. Finally, for any given $\mathbf{l}$, we have reduced the set $(\epsilon_i\mathbf{l}+ \alpha_{\epsilon_i,h_i} \mathbf{B})\cap \mathbf{R}^p_{h_i}$ to a single element, i.e.,
the closest lattice element to $\epsilon_i\mathbf{l}$, which somehow corresponds to setting $\alpha_{\epsilon_i,h_i} =0$ for the running cost. Hence, we have set $M$ to $\max \{1,M_{\mathbf{f}}\} = 1.$
\end{remark}

\begin{table}[h!]
\renewcommand\arraystretch{1.2}
\begin{center}
 \caption{Results for \emph{(MOC1)}.}
\label{tab-firstP}
\begin{tabular}{|c|c|c|c|}
\hline
& i=3 &  i=4 & i=5 \\
\hline Size & (306,5897) & (1630,65093) & (10422,856445)\\
\hline $|V^{k(\epsilon_i,h_i)}_{\epsilon_i,h_i}(-h_i,x_0)|$
 & 10 & 33 & 130\\
\hline
$\mathcal{H}(V^{k(\epsilon_i,h_i)}_{\epsilon_i,h_i}(-h_i,x_0),V(0,x_0))$
&  0.091227 & 0.046550 & 0.022605\\
\hline
$\overline{\mathcal{H}}(V^{k(\epsilon_i,h_i)}_{\epsilon_i,h_i}(-h_i,x_0),V(0,x_0))$
&  1 & 0.5103 &  0.2478\\
\hline
 \end{tabular}
\end{center}
\end{table}

\begin{table}[h!]
\renewcommand\arraystretch{1.2}
\begin{center}
 \caption{Results for \emph{(MOC2)}.}
\label{tab-secondP}
\begin{tabular}{|c|c|c|c|}
\hline
& i=3 &  i=4 & i=5 \\
\hline Size & (306,10961)  & (1630,132125) & (10422,1826357)\\
\hline $|V^{k(\epsilon_i,h_i)}_{\epsilon_i,h_i}(-h_i,x_0)|$
& 34 & 130 & 514\\
\hline
$\mathcal{H}(V^{k(\epsilon_i,h_i)}_{\epsilon_i,h_i}(-h_i,x_0),V(0,x_0))$
& 0.051067 & 0.033192  &  0.016627 \\
\hline
$\overline{\mathcal{H}}(V^{k(\epsilon_i,h_i)}_{\epsilon_i,h_i}(-h_i,x_0),V(0,x_0))$
& 1 & 0.65  &  0.3256 \\
\hline
 \end{tabular}
\end{center}
\end{table}

\begin{table}[h!]
\renewcommand\arraystretch{1.2}
\begin{center}
 \caption{Results for \emph{(MOC3)}.}
\label{tab-thirdP}
\begin{tabular}{|c|c|c|c|}
\hline
& i=3 &  i=4 & i=5 \\
\hline Size & (306,6529) & (1630,66613) & (10422,834285)\\
\hline $|V^{k(\epsilon_i,h_i)}_{\epsilon_i,h_i}(-h_i,x_0)|$
& 3 & 21 & 99\\
\hline
$\mathcal{H}(V^{k(\epsilon_i,h_i)}_{\epsilon_i,h_i}(-h_i,x_0),V(0,x_0))$
& 0.765685 &  0.054420 & 0.035360 \\
\hline
$\overline{\mathcal{H}}(V^{k(\epsilon_i,h_i)}_{\epsilon_i,h_i}(-h_i,x_0),V(0,x_0))$
& 1 &  0.0711 & 0.0462 \\
\hline
 \end{tabular}
\end{center}
\end{table}

\begin{table}[h!]
\renewcommand\arraystretch{1.2}
\begin{center}
 \caption{Results for \emph{(MOC4)}.}
\label{tab-fourthP}
\begin{tabular}{|c|c|c|c|}
\hline
& i=3 &  i=4 & i=5 \\
\hline Size & (306,7553) & (1630,85213) & (10422,1134221)\\
\hline $|V^{k(\epsilon_i,h_i)}_{\epsilon_i,h_i}(-h_i,x_0)|$
& 9 & 33  & 129\\
\hline
$\mathcal{H}(V^{k(\epsilon_i,h_i)}_{\epsilon_i,h_i}(-h_i,x_0),V(0,x_0))$
& 0.033857 & 0.028646 & 0.014031\\
\hline
$\overline{\mathcal{H}}(V^{k(\epsilon_i,h_i)}_{\epsilon_i,h_i}(-h_i,x_0),V(0,x_0))$
& 1 & 0.8461 & 0.4144\\
\hline
 \end{tabular}
\end{center}
\end{table}

\begin{figure}[!h]
\centering
\includegraphics[height=9cm]{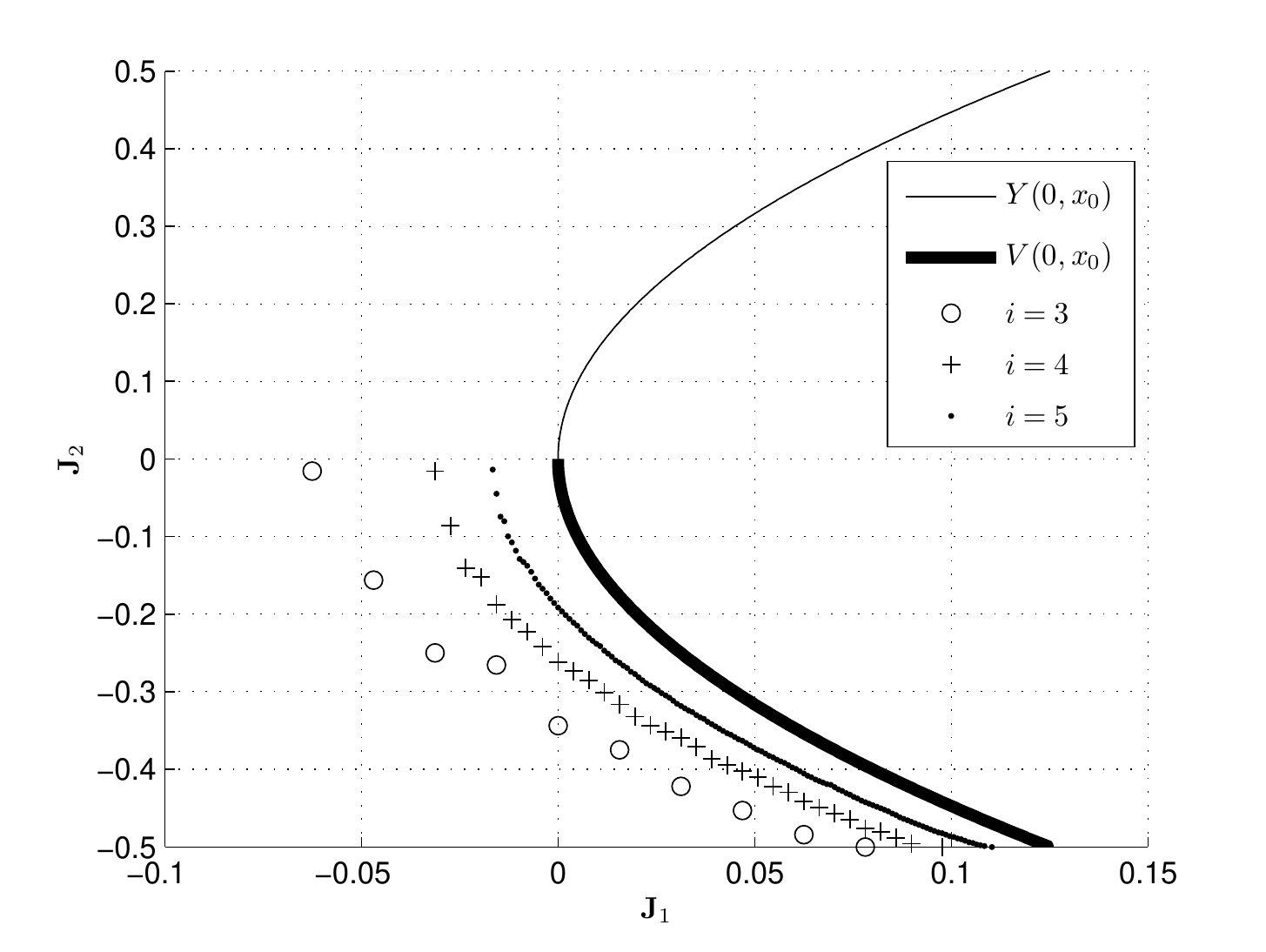}
   \caption{\emph{(MOC1)}: Objective space (plain line), Pareto optimal set $V(0,x_0)$ (bold line) and approximate Pareto optimal set $V^{k(\epsilon_i,h_i)}_{\epsilon_i,h_i}(-h_i,x_0)$ for  $i=3$ (o), $i=4$ (+), and $i=5$ ($\cdot$).}
   \label{fig-firstP}
\end{figure}

\begin{figure}[!h]
\centering
\includegraphics[height=9cm]{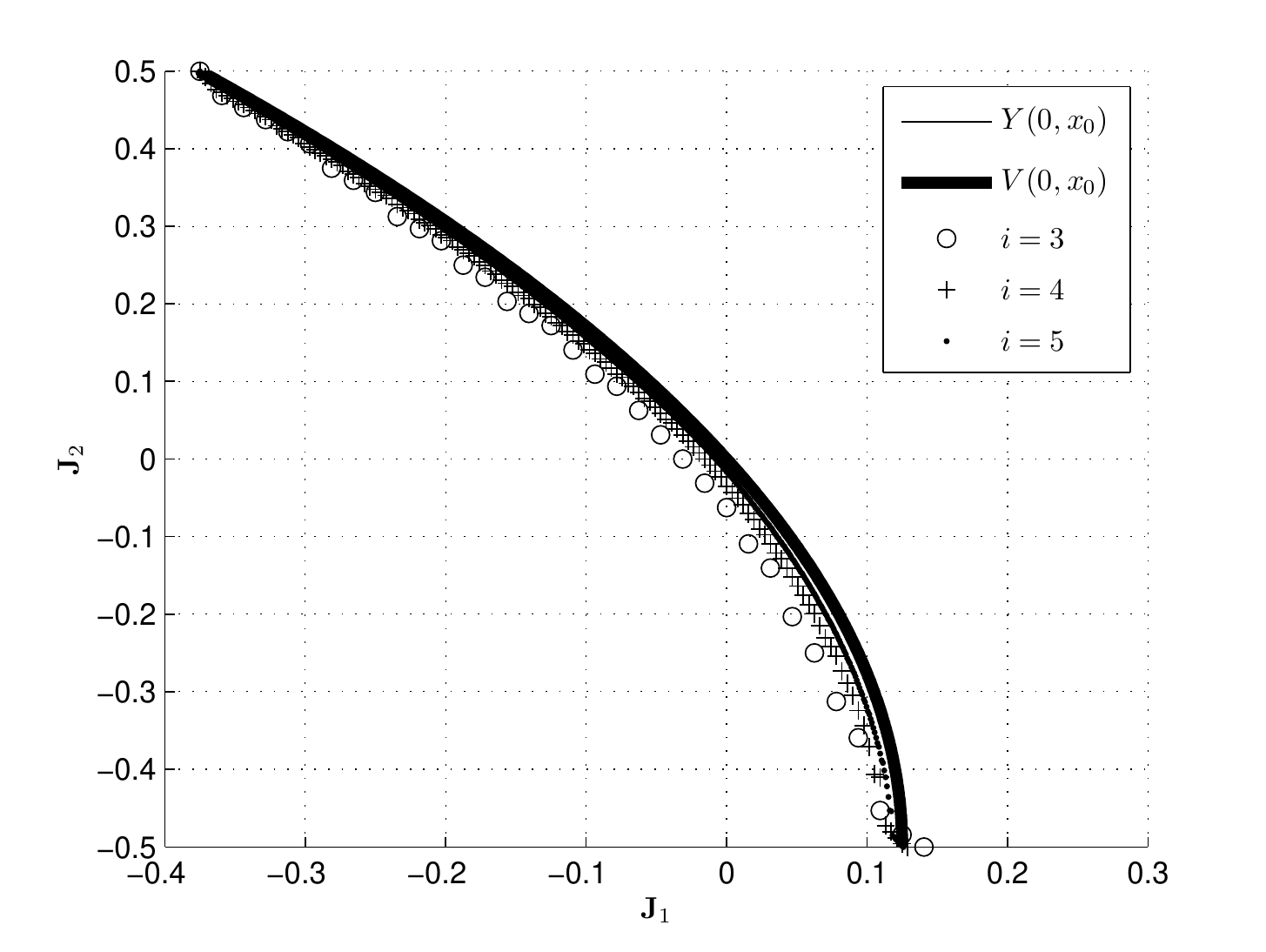}
   \caption{\emph{(MOC2)}: Objective space (plain line), Pareto optimal set $V(0,x_0)$ (bold line) and approximate Pareto optimal set $V^{k(\epsilon_i,h_i)}_{\epsilon_i,h_i}(-h_i,x_0)$ for  $i=3$ (o), $i=4$ (+), and $i=5$ ($\cdot$).}
   \label{fig-secondP}
\end{figure}

\begin{figure}[!h]
\centering
\includegraphics[height=9cm]{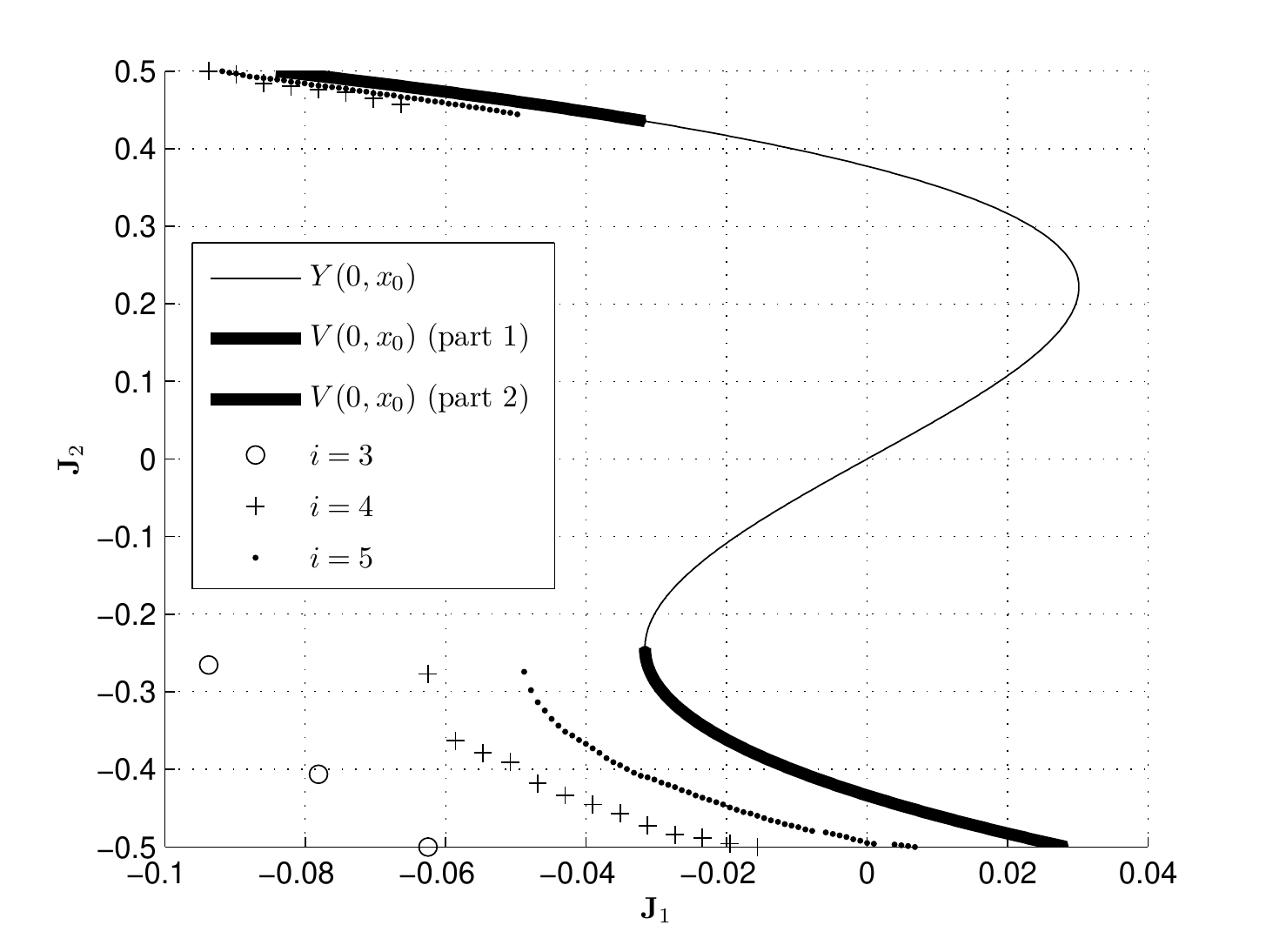}
   \caption{\emph{(MOC3)}: Objective space (plain line), Pareto optimal set $V(0,x_0)$ (bold lines) and approximate Pareto optimal set $V^{k(\epsilon_i,h_i)}_{\epsilon_i,h_i}(-h_i,x_0)$ for  $i=3$ (o), $i=4$ (+), and $i=5$ ($\cdot$).}
   \label{fig-thirdP}
\end{figure}

\begin{figure}[!h]
\centering
\includegraphics[height=9cm]{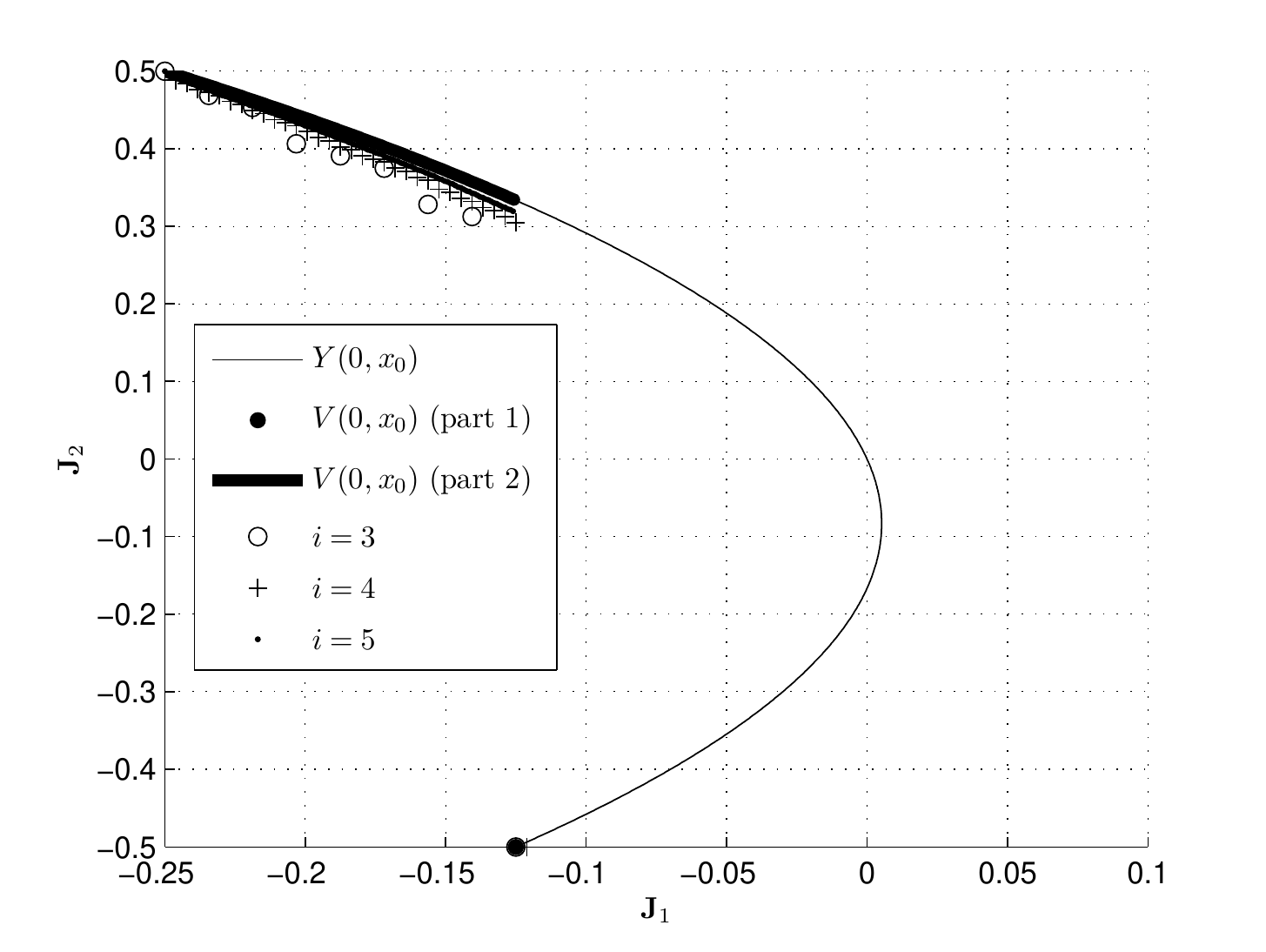}
   \caption{\emph{(MOC4)}: Objective space (plain line), Pareto optimal set $V(0,x_0)$ (bold lines) and approximate Pareto optimal set $V^{k(\epsilon_i,h_i)}_{\epsilon_i,h_i}(-h_i,x_0)$ for  $i=3$ (o), $i=4$ (+), and $i=5$ ($\cdot$).}
   \label{fig-fourthP}
\end{figure}

\clearpage

\subsection{Discussion}

Because the dynamics and the final time $T$ are the same for the four problems, it is normal that
the total number of grid points remains the same. On the other hand, the running cost is different
for the four problems, hence for a given grid point $\mathbf{x_h}$, the sets $\mathrm{FL^+}(\mathbf{x_h})$
 differ, which explains why the  number of successors varies between the four problems.\\

The large value, i.e., 0.765685,  in Table \ref{tab-thirdP} for (MOC3) comes from the fact that the approximate
Pareto optimal set is not able for $i=3$ to capture the upper part of the Pareto optimal set. However, as $i$ increases,
the approximate Pareto optimal set now captures the upper part of the Pareto optimal set, and as result, the
Hausdorff distance $\mathcal{H}(V^{k(\epsilon_i,h_i)}_{\epsilon_i,h_i}(-h_i,x_0),V(0,x_0))$ decreases considerably.\\

As $i$ increases, as expected, for the four problems, a better approximation of the Pareto optimal set is obtained.
The proposed approach also works well regardless whether the set $Y(0,x_0)+\mathbf{R}_+^p$ is convex
or not.

\section{Conclusion} \label{s:con}

In this paper, we have  derived a convergent approximation of the Pareto optimal
set for finite-horizon multiobjective optimal control problems. Several techniques
such as domain decomposition \cite{OptConTh8_book} or dynamic grid refinement \cite{Cardaliaguet99_Via}
could be considered to improve the computational complexity of the proposed approach.  Let us also mention  the idea of clustering recently proposed in
\cite{Guigue10_1}, which consists of keeping only a subset (with fixed cardinality) of the approximate
Pareto optimal set at each grid point during the resolution of the multiobjective dynamic programming equation (\ref{eq:recursive_1})-(\ref{eq:recursive_2}).\\

A direct extension to this work would be to consider the general
case of a pointed closed convex cone $P$ instead of the nonnegative
orthant $\mathbf{R}_+^p$ and constraints on the state. Also, the
proposed approach could be considered for other classes of optimal
control problems, such as multiobjective exit-time optimal control
problems \cite{MOC_num_2}.

\bibliography{biblio}
\bibliographystyle{siam}

\end{document}